\newcommand{\qdn}{\hspace*{-1.5mm}}
\newcommand{\qqdn}{\hspace*{-2.5mm}}
\newcommand{\xqdn}{\hspace*{-5.0mm}}
\newcommand{\xxqdn}{\hspace*{-10mm}}
\newcommand{\sst}{\scriptstyle}
\newcommand{\ffnk}[4]{\left[\qdn\ba{#1}#3\\#4\ea{\!\Big|\:#2}\right]}
\newcommand{\binm}{\binom}
\newcommand{\nnm}{\nonumber}
\newcommand{\be}{\begin{equation}}
\newcommand{\ee}{\end{equation}}
\newcommand{\ba}{\begin{array}}
\newcommand{\ea}{\end{array}}
\newcommand{\bmn}{\begin{eqnarray}}
\newcommand{\emn}{\end{eqnarray}}
\newcommand{\bnm}{\begin{eqnarray*}}
\newcommand{\enm}{\end{eqnarray*}}
\newcommand{\bln}{\begin{subequations}}
\newcommand{\eln}{\end{subequations}}
\newtheorem{thm}{Theorem}
\newtheorem{lemm}[thm]{Lemma}
\newtheorem{corl}[thm]{Corollary}
\newtheorem{entry}{Entry}
\newcommand{\bbtm}[4]{\bibitem{kn:#1}{#2,}~{#3,}~{#4.}}
\newcommand{\cito}[1]{\cite{kn:#1}}
\newcommand{\citu}[2]{\cite[#2]{kn:#1}}
\begin{document} 
{
\title{Watson-type $_3F_2$-series
 and summation formulae involving generalized harmonic
numbers}
\author{$^{a,b}$Chuanan Wei}

\footnote{\emph{2010 Mathematics Subject Classification}: Primary
05A10 and Secondary 33C20.}

\dedicatory{$^A$Department of Mathematics\\
  Shanghai Normal University, Shanghai 200234, China\\
$^B$Department of Information Technology\\
  Hainan Medical College, Haikou 571199, China}

\thanks{\emph{Email address}: weichuanan78@ 163.com}

 \keywords{Hypergeometric series; Watson's $_3F_2$-series identity; Derivative operator; Harmonic numbers}

\begin{abstract}
In terms of the derivative operator and Watson-type $_3F_2$-series
identities, three families of summation formulae involving
generalized harmonic numbers are established.
\end{abstract}

\maketitle\thispagestyle{empty}
\markboth{C. Wei}
         {Summation formulae involving generalized harmonic numbers}

\section{Introduction}
For a complex variable $x$, define the shifted factorial to be
\[(x)_{0}=0\quad \text{and}\quad (x)_{n}
=x(x+1)\cdots(x+n-1)\quad \text{with}\quad n\in\mathbb{N}.\]
 Following Andrews, Askey and Roy~\citu{andrews-r}{Chapter 2}, define the hypergeometric series by
\[\qdn_{1+r}F_s\ffnk{cccc}{z}{a_0,&a_1,&\cdots,&a_r}{&b_1,&\cdots,&b_s}
 \:=\:\sum_{k=0}^\infty\frac{(a_0)_k(a_1)_k\cdots(a_r)_k}{k!(b_1)_k\cdots(b_s)_k}z^k,\]
where $\{a_{i}\}_{i\geq0}$ and $\{b_{j}\}_{j\geq1}$ are complex
parameters such that no zero factors appear in the denominators of
the summand on the right hand side. Then Watson's $_3F_2$-series
identity (cf. \citu{andrews-r}{p. 148}) can be stated as
 \bmn\label{watson}
 _3F_2\ffnk{cccc}{1}{a,b,c}{\frac{1+a+b}{2},2c}
=\frac{\Gamma(\frac{1}{2})\Gamma(\frac{1+a+b}{2})\Gamma(\frac{1}{2}+c)\Gamma(\frac{1-a-b}{2}+c)}
{\Gamma(\frac{1+a}{2})\Gamma(\frac{1+b}{2})\Gamma(\frac{1-a}{2}+c)\Gamma(\frac{1-b}{2}+c)},
 \emn
where $Re(1-a-b+2c)>0$ and $\Gamma(x)$ is the well-known gamma
function
\[\xqdn\Gamma(x)=\int_{0}^{\infty}t^{x-1}e^{-t}dt\:\:\text{with}\:\:Re(x)>0.\]
A terminating form of \eqref{watson} due to Bailey \cito{bailey} can
be expressed as
 \bmn\label{watson-ter}
 _3F_2\ffnk{cccc}{1}{a,b,-n}{\frac{1+a+b}{2},-2n}
=\frac{(\frac{1+a}{2})_n(\frac{1+b}{2})_n}{(\frac{1}{2})_n(\frac{1+a+b}{2})_n}.
 \emn

For a complex number $x$ and a positive integer $\ell$, define
generalized harmonic numbers of $\ell$-order to be
\[H_{0}^{\langle \ell\rangle}(x)=0
\quad\text{and}\quad
 H_{n}^{\langle\ell\rangle}(x)=\sum_{k=1}^n\frac{1}{(x+k)^{\ell}}
 \quad\text{with}\quad n\in\mathbb{N}.\]
When $x=0$, they become harmonic numbers of $\ell$-order
\[H_{0}^{\langle \ell\rangle}=0
\quad\text{and}\quad
  H_{n}^{\langle \ell\rangle}
  =\sum_{k=1}^n\frac{1}{k^{\ell}} \quad\text{with}\quad n\in\mathbb{N}.\]
  Fixing $\ell=1$ in $H_{0}^{\langle \ell\rangle}(x)$ and $H_{n}^{\langle \ell\rangle}(x)$, we obtain
generalized harmonic numbers
\[H_{0}(x)=0
\quad\text{and}\quad H_{n}(x)
  =\sum_{k=1}^n\frac{1}{x+k} \quad\text{with}\quad n\in\mathbb{N}.\]
When $x=0$, they reduce to classical harmonic numbers
\[H_{0}=0\quad\text{and}\quad
H_{n}=\sum_{k=1}^n\frac{1}{k} \quad\text{with}\quad
n\in\mathbb{N}.\]

 For a differentiable function $f(x)$, define the derivative operator
$\mathcal{D}_x$ by
 \bnm
\mathcal{D}_xf(x)=\frac{d}{dx}f(x).
 \enm
In order to explain the relation of the derivative operator and
generalized harmonic numbers, we introduce the following lemma.

\begin{lemm} \label{lemm-a}
 Let $x$ and  $\{a_j,b_j,c_j,d_j\}_{j=1}^s$ be all complex
numbers. Then
 \bnm
\mathcal{D}_x\prod_{j=1}^s\frac{a_jx+b_j}{c_jx+d_j}=\prod_{j=1}^s\frac{a_jx+b_j}{c_jx+d_j}
 \sum_{j=1}^s\frac{a_jd_j-b_jc_j}{(a_jx+b_j)(c_jx+d_j)}.
 \enm
\end{lemm}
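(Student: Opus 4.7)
The statement is a routine logarithmic-derivative computation, so my plan is to use the standard trick of working with the logarithmic derivative $F'/F$ (which makes sense as a rational function identity, with no analytic concerns about branches of the logarithm).

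First I would set $F(x)=\prod_{j=1}^s\frac{a_jx+b_j}{c_jx+d_j}$ and observe that the statement of the lemma is equivalent to the identity
\[
\frac{\mathcal{D}_xF(x)}{F(x)}=\sum_{j=1}^s\frac{a_jd_j-b_jc_j}{(a_jx+b_j)(c_jx+d_j)},
\]
provided $F(x)\neq 0$. Since both sides are rational functions of $x$, establishing the identity on the (cofinite) locus where $F(x)\neq 0$ is enough to deduce it everywhere by continuity of the derivative.

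Next I would verify this reduced identity by the standard logarithmic-derivative rule: since $F$ is a product (and quotient) of linear factors, $\mathcal{D}_x\log F(x)=\sum_{j=1}^s\bigl(\mathcal{D}_x\log(a_jx+b_j)-\mathcal{D}_x\log(c_jx+d_j)\bigr)$, so
\[
\frac{\mathcal{D}_xF(x)}{F(x)}=\sum_{j=1}^s\left[\frac{a_j}{a_jx+b_j}-\frac{c_j}{c_jx+d_j}\right].
\]
Combining the two fractions inside the bracket over the common denominator $(a_jx+b_j)(c_jx+d_j)$ gives numerator $a_j(c_jx+d_j)-c_j(a_jx+b_j)=a_jd_j-b_jc_j$, which yields exactly the claimed summand. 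Multiplying through by $F(x)$ then gives the lemma.

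There is no real obstacle here; the only delicate point is that the formal computation with logarithms is only literally valid on the open set where the linear factors are nonzero, but since both sides of the claimed identity are rational functions of $x$ in $\mathbb{C}$, agreement on any nonempty open set (or cofinite set) is enough to extend to all complex $x$ at which both sides make sense. If one prefers to avoid logarithms altogether, the same computation can be carried out by induction on $s$ using the product and quotient rules, with the base case $s=1$ being the elementary identity $\mathcal{D}_x\tfrac{ax+b}{cx+d}=\tfrac{ad-bc}{(cx+d)^2}$.
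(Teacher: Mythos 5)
Your proof is correct, but it takes a different route from the paper's. The paper proves the lemma by induction on $s$: it verifies the case $s=1$ directly and then, in the inductive step, applies the product rule to $\bigl(\prod_{j=1}^{m}\tfrac{a_jx+b_j}{c_jx+d_j}\bigr)\cdot\tfrac{a_{m+1}x+b_{m+1}}{c_{m+1}x+d_{m+1}}$, invokes the inductive hypothesis on the first factor and the base case on the second, and refactors to absorb the new term into the sum. Your logarithmic-derivative argument reaches the same summand $\tfrac{a_j}{a_jx+b_j}-\tfrac{c_j}{c_jx+d_j}=\tfrac{a_jd_j-b_jc_j}{(a_jx+b_j)(c_jx+d_j)}$ in one line and is arguably the more transparent computation, at the modest cost of the extra remark (which you correctly supply) that the identity, being between rational functions, extends from the cofinite set where $F(x)\neq 0$ to all $x$ where both sides are defined; the paper's induction avoids that continuity step entirely since it never divides by $F$. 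You also note the inductive alternative at the end, so you have in effect sketched the paper's proof as well. One minor point common to both arguments: the statement implicitly assumes no factor $a_jx+b_j$ or $c_jx+d_j$ vanishes identically (otherwise the summand on the right is of the form $0/0$), but this is a degeneracy the paper does not address either.
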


\begin{proof}
It is not difficult to verify the case $s=1$ of Lemma \ref{lemm-a}.
Suppose that
 \bnm
\mathcal{D}_x\prod_{j=1}^m\frac{a_jx+b_j}{c_jx+d_j}=\prod_{j=1}^m\frac{a_jx+b_j}{c_jx+d_j}
 \sum_{j=1}^m\frac{a_jd_j-b_jc_j}{(a_jx+b_j)(c_jx+d_j)}
 \enm
is true. We can proceed as follows:
 \bnm
&&\xqdn\mathcal{D}_x\prod_{j=1}^{m+1}\frac{a_jx+b_j}{c_jx+d_j}=
 \mathcal{D}_x\bigg\{\prod_{j=1}^{m}\frac{a_jx+b_j}{c_jx+d_j}\frac{a_{m+1}x+b_{m+1}}{c_{m+1}x+d_{m+1}}\bigg\}\\
&&\xqdn\:=\:\frac{a_{m+1}x+b_{m+1}}{c_{m+1}x+d_{m+1}}\mathcal{D}_x\prod_{j=1}^{m}\frac{a_jx+b_j}{c_jx+d_j}
+\prod_{j=1}^{m}\frac{a_jx+b_j}{c_jx+d_j}\mathcal{D}_x\frac{a_{m+1}x+b_{m+1}}{c_{m+1}x+d_{m+1}}\\
&&\xqdn\:=\:\frac{a_{m+1}x+b_{m+1}}{c_{m+1}x+d_{m+1}}\prod_{j=1}^m\frac{a_jx+b_j}{c_jx+d_j}\sum_{j=1}^m\frac{a_jd_j-b_jc_j}{(a_jx+b_j)(c_jx+d_j)}\\
&&\xqdn\:+\:\prod_{j=1}^{m}\frac{a_jx+b_j}{c_jx+d_j}\frac{a_{m+1}d_{m+1}-b_{m+1}c_{m+1}}{(c_{m+1}x+d_{m+1})^2}\\
&&\xqdn\:=\:\prod_{j=1}^{m+1}\frac{a_jx+b_j}{c_jx+d_j}
 \bigg\{\sum_{j=1}^m\frac{a_jd_j-b_jc_j}{(a_jx+b_j)(c_jx+d_j)}+\frac{a_{m+1}d_{m+1}-b_{m+1}c_{m+1}}{(a_{m+1}x+b_{m+1})(c_{m+1}x+d_{m+1})}\bigg\}\\
&&\xqdn\:=\:\prod_{j=1}^{m+1}\frac{a_jx+b_j}{c_jx+d_j}
 \sum_{j=1}^{m+1}\frac{a_jd_j-b_jc_j}{(a_jx+b_j)(c_jx+d_j)}.
 \enm
This proves Lemma \ref{lemm-a} inductively.
\end{proof}

Setting $a_j=1,b_j=r-j+1,c_j=0,d_j=j$ in Lemma \ref{lemm-a}, it is
easy to find that
$$\mathcal{D}_x\:\binm{x+r}{s}=\binm{x+r}{s}\big\{H_r(x)-H_{r-s}(x)\big\},$$
where $r,s\in\mathbb{N}_0$ with $s\leq r$. Besides, we have the
following relation:
$$\mathcal{D}_xH_{n}^{\langle \ell\rangle}(x)=-\ell H_{n}^{\langle
\ell+1\rangle}(x).$$

As pointed out by Richard Askey (cf. \cito{andrews}), expressing
harmonic numbers in accordance with differentiation of binomial
coefficients can be traced back to Issac Newton.
 In 2003, Paule and Schneider
\cito{paule} computed the family of series:
 \bnm
\quad
W_n(\alpha)=\sum_{k=0}^n\binm{n}{k}^{\alpha}\{1+\alpha(n-2k)H_k\}
 \enm
with $\alpha=1,2,3,4,5$ by combining this way with Zeilberger's
algorithm for definite hypergeometric sums. According to the
derivative operator and the hypergeometric form of Andrews'
$q$-series transformation, Krattenthaler and Rivoal
\cito{krattenthaler} deduced general Paule-Schneider type identities
with $\alpha$ being a positive integer.  More results from
differentiation of binomial coefficients can be seen in the papers
\cite{kn:sofo-a,kn:wang-b,kn:wei-a,kn:wei-c,kn:wei-d}. For different
ways and related harmonic number identities, the reader may refer to
\cite{kn:chen,kn:kronenburg-a,
kn:kronenburg-b,kn:schneider,kn:sofo-b,kn:wang-a}. It should be
mentioned that Sun \cito{sun} showed recently some congruence
relations concerning harmonic numbers to us.

Inspired by the work just mentioned, we shall explore, by means of
the derivative operator and Watson-type $_3F_2$-series identities,
closed expressions for the following three families of series:
 \bnm
 &&\xqdn\sum_{k=0}^{n}(-1)^k\binm{n}{k}\frac{\binm{2x+n+k}{k}}{\binm{x+k}{k}}k^tH_k,\\
 &&\xqdn\sum_{k=0}^{n}\binm{2n-k}{n}\binm{x+k}{k}k^tH_{k}^{\langle2\rangle}(x),\\
 &&\xqdn\sum_{k=0}^{n}\binm{2n-k}{n}\binm{x+k}{k}k^tH_{k}^2(x),\\
 \enm
where $t\in\mathbb{N}_0$. For limit of space, our explicit formulae
are offered only for $t=0,1,2$. In order to make the reader have a
taste, we enumerate, above all, three concise harmonic number
identities from the special cases of them as follows:
  \bnm
&&\sum_{k=0}^n(-2)^k\binm{n}{k}H_{k}=
 \begin{cases}
 2H_{2m}-H_m,&\qdn n=2m;
\\[2mm]
H_m-2H_{2m+1},&\qdn n=2m+1,
\end{cases}\\
&&\:\sum_{k=0}^{n}\binm{2n-k}{n}H_{k}^{\langle2\rangle}=\frac{\binm{2n+1}{n}}{2}
\big\{H_{2n+1}^{\langle2\rangle}-(H_{2n+1}-H_n)^2\big\},\\
&&\:\sum_{k=0}^{n}\binm{2n-k}{n}H^2_{k}=\frac{\binm{2n+1}{n}}{2}
\big\{2H_{n+1}^{\langle2\rangle}-H_{2n+1}^{\langle2\rangle}+2H^2_{n+1}-H^2_n
\\&&\qquad\qquad\qquad\qquad\qquad\qquad
+H_{2n+1}(H_{2n+1}-4H_{n+1}+2H_n)\big\}.
 \enm

\section{The first family of summation formulae involving\\ generalized harmonic numbers}
\begin{lemm}\label{lemm-b}
Let $a$, $b$ and $c$ be all complex numbers. Then
 \bnm
 _3F_2\ffnk{cccc}{1}{a,b,c}{\frac{1+a+b}{2},2c-1}
&&\xqdn\!=\frac{\Gamma(\frac{1}{2})\Gamma(\frac{1+a+b}{2})\Gamma(c-\frac{1}{2})\Gamma(c-\frac{1+a+b}{2})}
{\Gamma(\frac{a}{2})\Gamma(\frac{b}{2})\Gamma(c-\frac{a}{2})\Gamma(c-\frac{b}{2})}\\
&&\xqdn\!+\:\frac{\Gamma(\frac{1}{2})\Gamma(\frac{1+a+b}{2})\Gamma(c-\frac{1}{2})\Gamma(c-\frac{1+a+b}{2})}
{\Gamma(\frac{1+a}{2})\Gamma(\frac{1+b}{2})\Gamma(c-\frac{1+a}{2})\Gamma(c-\frac{1+b}{2})},
 \enm
 where the convergence condition is $Re(2c-a-b-1)>0$.
\end{lemm}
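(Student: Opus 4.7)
The strategy is to decompose the left-hand side as a sum of two quantities, each of which is the value of a Watson-summable $_3F_2$ matching one of the two gamma quotients.

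First, substituting $c\to c-1$ in Watson's formula \eqref{watson} yields
\[
{}_3F_2\ffnk{cccc}{1}{a,b,c-1}{\frac{1+a+b}{2},2c-2}
=\frac{\Gamma(\frac{1}{2})\Gamma(\frac{1+a+b}{2})\Gamma(c-\frac{1}{2})\Gamma(c-\frac{1+a+b}{2})}{\Gamma(\frac{1+a}{2})\Gamma(\frac{1+b}{2})\Gamma(c-\frac{1+a}{2})\Gamma(c-\frac{1+b}{2})},
\]
which is precisely the second gamma quotient in the claim. Denoting the target series by $L(a,b,c)$, it therefore suffices to show that $L(a,b,c)$ minus the above series equals the first gamma quotient.

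Let $A_k$ and $C_k$ be the $k$-th summands of these two series. A routine Pochhammer manipulation starting from $(c)_k/(2c-1)_k=2(c+k-1)(c-1)_k/[(2c+k-2)(2c-2)_k]$ gives
\[
A_k-C_k=\frac{(a)_k(b)_k(c-1)_k}{k!\,(\frac{1+a+b}{2})_k\,(2c-2)_k}\cdot\frac{k}{2c+k-2},
\]
which vanishes at $k=0$. Shifting $k\to k+1$ and applying the telescoping identities $(c-1)_{k+1}=(c-1)(c)_k$ and $(2c-2)_{k+1}(2c+k-1)=(2c-2)(2c-1)(2c)_k$, the remaining sum collapses to
\[
\sum_{k\ge 0}(A_k-C_k)=\frac{ab}{(1+a+b)(2c-1)}\,{}_3F_2\ffnk{cccc}{1}{a+1,b+1,c}{\frac{3+a+b}{2},\,2c}.
\]

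The reduced $_3F_2$ is again of Watson form, so I apply \eqref{watson} a second time. Reducing the four shifted Gamma functions $\Gamma(\frac{a}{2}+1)$, $\Gamma(\frac{b}{2}+1)$, $\Gamma(\frac{3+a+b}{2})$, $\Gamma(c+\frac{1}{2})$ via $\Gamma(x+1)=x\Gamma(x)$ to $\Gamma(\frac{a}{2})$, $\Gamma(\frac{b}{2})$, $\Gamma(\frac{1+a+b}{2})$, $\Gamma(c-\frac{1}{2})$ produces the first claimed gamma quotient multiplied by the rational factor $(1+a+b)(2c-1)/(ab)$, which cancels the prefactor above exactly. The main obstacle is this final bookkeeping --- four half-integer gamma shifts must align with the rational prefactor produced by the index shift --- and the hypothesis $\mathrm{Re}(2c-a-b-1)>0$ is precisely what ensures that both Watson evaluations are valid and that all termwise manipulations are legitimate.
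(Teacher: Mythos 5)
Your proposal is correct, but it reaches Lemma \ref{lemm-b} by a genuinely different route than the paper. The paper never touches Watson's theorem here: it starts from Whipple's $_3F_2$ evaluation, forms the linear combination of Whipple with its $a\to 1+a$ contiguous companion to obtain \eqref{whipple-a}, and then transports the target series onto that Whipple-type series via Kummer's transformation \eqref{kummer}. You instead work entirely on the Watson side: you peel off the Watson-summable series ${}_3F_2[a,b,c-1;\frac{1+a+b}{2},2c-2]$ (which accounts for the second gamma quotient), observe that the termwise difference carries a factor $k/(2c+k-2)$ and hence vanishes at $k=0$, and after the index shift recognize ${}_3F_2[a+1,b+1,c;\frac{3+a+b}{2},2c]$ as again being in Watson form, whence a second application of \eqref{watson} and the four $\Gamma(x+1)=x\Gamma(x)$ reductions produce the first gamma quotient with the prefactor $\frac{ab}{(1+a+b)(2c-1)}$ cancelling exactly. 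I checked the key identities $\frac{(c)_k}{(2c-1)_k}=\frac{(c-1)_k}{(2c-2)_k}\cdot\frac{2(c+k-1)}{2c+k-2}$, the collapse to $\frac{ab}{(1+a+b)(2c-1)}$ after shifting, and the final gamma bookkeeping; all are right, and the parametric excess of all three series involved equals $c-\frac{1+a+b}{2}$, so the hypothesis $Re(2c-a-b-1)>0$ guarantees absolute convergence and legitimizes the termwise subtraction. Your argument is more self-contained, since it uses only the identity \eqref{watson} already quoted in the introduction and needs neither Whipple nor Kummer; the paper's Whipple-plus-Kummer machinery, on the other hand, is set up once and then reused to produce the deeper contiguous evaluations \eqref{watson-a} and \eqref{watson-b} needed for Lemmas \ref{lemm-c} and \ref{lemm-d}, though your difference-and-shift scheme would iterate to those cases as well (at the cost of bookkeeping that grows with each step).
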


\begin{proof}
Recall Whipple's $_3F_2$-series identity (cf. \citu{andrews-r}{p.
149}):
 \bnm
 \:\:_3F_2\ffnk{cccc}{1}{a,1-a,b}{c,1+2b-c}
=\frac{\pi 2^{1-2b}\Gamma(c)\Gamma(1+2b-c)}
{\Gamma(\frac{a+c}{2})\Gamma(\frac{1+a-c}{2}+b)\Gamma(\frac{1-a+c}{2})\Gamma(\frac{2-a-c}{2}+b)}
 \enm
provided that $Re(b)>0$. Perform the replacement $a\to 1+a$ in the
last equation to get
 \bnm
\quad _3F_2\ffnk{cccc}{1}{1+a,-a,b}{c,1+2b-c} =\frac{\pi
2^{1-2b}\Gamma(c)\Gamma(1+2b-c)}
{\Gamma(\frac{1+a+c}{2})\Gamma(\frac{2+a-c}{2}+b)\Gamma(\frac{c-a}{2})\Gamma(\frac{1-a-c}{2}+b)}.
 \enm
 The linear combination of the last two equations
gives
 \bmn\label{whipple-a}
 _3F_2\ffnk{cccc}{1}{a,-a,b}{c,1+2b-c}&&\xqdn\!=\frac{\pi\Gamma(c)\Gamma(1+2b-c)}
{4^b\Gamma(\frac{a+c}{2})\Gamma(\frac{1+a-c}{2}+b)\Gamma(\frac{1-a+c}{2})\Gamma(\frac{2-a-c}{2}+b)}
\nnm\\&&\xqdn\!+\:\frac{\pi\Gamma(c)\Gamma(1+2b-c)}
{4^b\Gamma(\frac{1+a+c}{2})\Gamma(\frac{2+a-c}{2}+b)\Gamma(\frac{c-a}{2})\Gamma(\frac{1-a-c}{2}+b)}.
 \emn
 In terms of Kummer's transformation formula (cf.
\citu{andrews-r}{p. 143}):
  \bmn \label{kummer}\quad
\xxqdn\qdn_3F_2\ffnk{cccc}{1}{a,b,c}{d,e}
 &&\xqdn\!\!=\frac{\Gamma(d)\Gamma(e)\Gamma(d+e-a-b-c)}{\Gamma(a)\Gamma(d+e-a-b)\Gamma(d+e-a-c)}
 \nnm\\
&&\xqdn\!\!\times\:\,{_3F_2}\ffnk{cccc}{1}{d-a,e-a,d+e-a-b-c}{d+e-a-b,d+e-a-c},
 \emn
we gain
  \bnm
  3F_2\ffnk{cccc}{1}{c,b,a}{\frac{1+a+b}{2},2c-1}
 &&\xqdn\!=\frac{\Gamma(\frac{1+a+b}{2})\Gamma(2c-1)\Gamma(c-\frac{1+a+b}{2})}{\Gamma(c)\Gamma(c-\frac{1+b-a}{2})\Gamma(c-\frac{1+a-b}{2})}\\
 &&\xqdn\!\times\:\,{_3F_2}\ffnk{cccc}{1}{\frac{1+a+b}{2}-c,c-1,c-\frac{1+a+b}{2}}{c-\frac{1+b-a}{2},c-\frac{1+a-b}{2}}.
 \enm
Calculating the series on the right hand side by \eqref{whipple-a},
we achieve Lemma \ref{lemm-b} to complete the proof.
\end{proof}

\begin{thm} \label{thm-a}
Let $x$ be a complex number. Then
 \bnm
\sum_{k=0}^n(-1)^k\binm{n}{k}\frac{\binm{2x+n+k}{k}}{\binm{x+k}{k}}H_{k}
=\begin{cases} H_m(x)+2H_{2m}-H_m,&\qdn n=2m;
\\[2mm]
H_m-2H_{2m+1}-H_m(x),&\qdn n=2m+1.
\end{cases}
 \enm
\end{thm}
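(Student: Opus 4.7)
The plan is to combine Lemma~\ref{lemm-b} with the derivative operator $\mathcal{D}_c$. Specifically, I would specialize Lemma~\ref{lemm-b} by setting $a=-n$ and $b=2x+n+1$, so that $\frac{1+a+b}{2}=x+1$ becomes a lower parameter while $c$ stays free (carrying the companion lower parameter $2c-1$). The resulting terminating ${}_3F_2$ on the LHS equals $(-1)^n$ at $c=1$ by Chu--Vandermonde. A short log-derivative shows $\mathcal{D}_c|_{c=1}\frac{(c)_k}{(2c-1)_k}=H_k-2H_k=-H_k$, so applying $\mathcal{D}_c|_{c=1}$ to the LHS of Lemma~\ref{lemm-b} produces exactly the negative of the target sum in Theorem~\ref{thm-a}.

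On the RHS of Lemma~\ref{lemm-b}, I would exploit a parity split: with $a=-n$, exactly one of the two Gamma-ratio terms vanishes identically because its denominator contains $\Gamma(-m)$. For $n=2m$ the first term dies (via $\Gamma(a/2)=\Gamma(-m)$), and for $n=2m+1$ the second term dies (via $\Gamma((1+a)/2)=\Gamma(-m)$). Evaluating the surviving Gamma ratio at $c=1$ by repeated application of the reflection identity $\Gamma(z)\Gamma(1-z)=\pi/\sin(\pi z)$ to the pairs involving $\Gamma(-x)\Gamma(x+1)$, $\Gamma(-x-m)\Gamma(x+m+1)$, and $\Gamma(\tfrac{1}{2}\pm m)$ (or $\Gamma(-m-\tfrac{1}{2})\Gamma(m+\tfrac{3}{2})$ in the odd case) collapses the expression to $(-1)^n$, consistent with the Chu--Vandermonde check.

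Differentiating the surviving Gamma ratio with respect to $c$ and evaluating at $c=1$ yields a sum of four digamma values. Using $\psi(z+k)-\psi(z)=\sum_{j=0}^{k-1}\frac{1}{z+j}$ and the even/odd decomposition $H_{2m}=\tfrac{1}{2}H_m+\sum_{i=1}^{m}\tfrac{1}{2i-1}$, two of these differences simplify to $H_m-2H_{2m}$ (respectively $H_m-2H_{2m+1}$ in the odd case) while the other two simplify to $-H_m(x)$. Multiplying by the value $(-1)^n$ of the surviving term at $c=1$ and comparing with the LHS derivative produces both claimed closed forms in one stroke.

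The main obstacle will be the careful sign-tracking when reducing the surviving Gamma ratio at $c=1$: several $(-1)^m$ factors arising from the separate reflection identities must combine cleanly to $(-1)^n$. A minor legitimacy concern is that the convergence hypothesis $\operatorname{Re}(2c-a-b-1)>0$ of Lemma~\ref{lemm-b} is violated at $c=1$ for generic $x$; but the choice $a=-n$ makes the series terminate, so the identity persists as a polynomial identity in the remaining parameters and may be safely differentiated in $c$ and evaluated at $c=1$.
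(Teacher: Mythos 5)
Your proposal is correct and is essentially the paper's own proof: the paper specializes Lemma~\ref{lemm-b} with $a=-n$, $b=2x+n+1$, $c=y+1$, applies $\mathcal{D}_y$ to both sides, and sets $y=0$, which is exactly your $\mathcal{D}_c$ evaluated at $c=1$, with the same parity split killing one of the two Gamma-ratio terms. The only difference is cosmetic: the paper first rewrites the surviving term in binomial-coefficient form and differentiates via Lemma~\ref{lemm-a}, whereas you differentiate the Gamma ratios directly through digamma values; both routes yield the same evaluations $H_m-2H_{2m}$ (resp.\ $H_m-2H_{2m+1}$) and $-H_m(x)$.
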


\begin{proof}
The case $a=-n$, $b=2x+n+1$ and $c=y+1$ of Lemma \ref{lemm-b} reads
as
 \bnm
\sum_{k=0}^n(-1)^k\binm{n}{k}\frac{\binm{2x+n+k}{k}\binm{y+k}{k}}{\binm{x+k}{k}\binm{2y+k}{k}}
=\begin{cases}
\frac{\binm{x-y+m}{m}\binm{m-\frac{1}{2}}{m}}{\binm{x+m}{m}\binm{y+m-\frac{1}{2}}{m}},&\qdn
n=2m;
\\[3mm]
\frac{-1}{2y+1}\frac{\binm{x-y+m}{m}\binm{m+\frac{1}{2}}{m}}{\binm{x+m}{m}\binm{y+m+\frac{1}{2}}{m}},&\qdn
n=2m+1.
\end{cases}
 \enm
 Applying the derivative operator $\mathcal{D}_y$ to both sides of
 it, we attain
 \bnm
&&\xqdn\sum_{k=0}^n(-1)^k\binm{n}{k}\frac{\binm{2x+n+k}{k}\binm{y+k}{k}}{\binm{x+k}{k}\binm{2y+k}{k}}
\big\{2H_k(2y)-H_k(y)\big\}\\
&&\xqdn\:=\:\begin{cases}
\frac{\binm{x-y+m}{m}\binm{m-\frac{1}{2}}{m}}{\binm{x+m}{m}\binm{y+m-\frac{1}{2}}{m}}
\big\{H_m(x-y)+H_m(y-\frac{1}{2})\big\},&\qdn n=2m;
\\[3mm]
\frac{-1}{2y+1}\frac{\binm{x-y+m}{m}\binm{m+\frac{1}{2}}{m}}{\binm{x+m}{m}\binm{y+m+\frac{1}{2}}{m}}
\big\{H_m(x-y)+H_{m+1}(y-\frac{1}{2})\big\},&\qdn n=2m+1.
\end{cases}
 \enm
Choosing $y=0$ in the last equation, we obtain Theorem \ref{thm-a}
to finish the proof.
\end{proof}

Taking $x=p$ with $p\in \mathbb{N}_0$ in Theorem \ref{thm-a}, we
have the summation formula on harmonic numbers.

\begin{corl} \label{corl-a}
Let $p$ be a nonnegative integer. Then
 \bnm
&&\xqdn\sum_{k=0}^n(-1)^k\binm{n}{k}\binm{2p+n+k}{p+k}H_{k}\\
&&\,\,\xqdn=\binm{2p+n}{p}\begin{cases}
2H_{2m}-H_m+H_{p+m}-H_p,&\qdn n=2m;
\\[2mm]
H_p-H_{p+m}+H_m-2H_{2m+1},&\qdn n=2m+1.
\end{cases}
 \enm
\end{corl}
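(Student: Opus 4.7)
The plan is to derive Corollary \ref{corl-a} directly from Theorem \ref{thm-a} by specializing the complex variable $x$ to the nonnegative integer $p$. Because Theorem \ref{thm-a} is already established in full generality, all that remains is to recognize that both the binomial coefficient on the left-hand side and the generalized harmonic number on the right-hand side admit clean rewritings when $x=p$ is an integer.

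First I would handle the binomial side. Applying the standard trinomial revision
$$\binm{a}{b}\binm{b}{c}=\binm{a}{c}\binm{a-c}{b-c}$$
with $a=2p+n+k$, $b=p+k$, $c=k$ yields
$$\binm{2p+n+k}{p+k}\binm{p+k}{k}=\binm{2p+n+k}{k}\binm{2p+n}{p},$$
and hence
$$\frac{\binm{2p+n+k}{k}}{\binm{p+k}{k}}=\frac{\binm{2p+n+k}{p+k}}{\binm{2p+n}{p}}.$$
Multiplying both sides of Theorem \ref{thm-a} (evaluated at $x=p$) by the $k$-independent constant $\binm{2p+n}{p}$ therefore turns the left-hand side into $\sum_{k=0}^n(-1)^k\binm{n}{k}\binm{2p+n+k}{p+k}H_k$, which matches exactly the summand appearing in the corollary.

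Next I would simplify $H_m(p)$. Directly from the definition,
$$H_m(p)=\sum_{k=1}^m\frac{1}{p+k}=H_{p+m}-H_p,$$
so the two branches of the right-hand side of Theorem \ref{thm-a} at $x=p$ become $H_{p+m}-H_p+2H_{2m}-H_m$ when $n=2m$ and $H_m-2H_{2m+1}-(H_{p+m}-H_p)=H_p-H_{p+m}+H_m-2H_{2m+1}$ when $n=2m+1$. Multiplying through by the factor $\binm{2p+n}{p}$ introduced in the previous step reproduces the two bracketed expressions in Corollary \ref{corl-a}.

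There is essentially no substantive obstacle here; the argument is pure bookkeeping once the two rewritings above are in hand. The only point to watch is the direction of the trinomial revision, so that $\binm{2p+n}{p}$ factors out of the summation cleanly rather than being absorbed term by term, together with the bookkeeping of the minus sign in the odd case, where the negative coefficient of $H_m(x)$ in Theorem \ref{thm-a} reverses the order of the classical harmonic numbers after the expansion $H_m(p)=H_{p+m}-H_p$ is substituted.
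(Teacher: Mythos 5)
Your proposal is correct and matches the paper's approach exactly: the paper's entire proof is the one-line remark that Corollary \ref{corl-a} follows by taking $x=p$ in Theorem \ref{thm-a}, and you have simply supplied the two routine verifications (the trinomial revision $\binm{2p+n+k}{p+k}\binm{p+k}{k}=\binm{2p+n+k}{k}\binm{2p+n}{p}$ that extracts the constant factor $\binm{2p+n}{p}$, and the identity $H_m(p)=H_{p+m}-H_p$) that the paper leaves implicit. Both steps check out, including the sign handling in the odd case.
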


 When $x\to\infty$, Theorem \ref{thm-a} reduces to the concise harmonic
 number identity:
 \bnm
  \sum_{k=0}^n(-2)^k\binm{n}{k}H_{k}=
 \begin{cases}
 2H_{2m}-H_m,&\qdn n=2m;
\\[2mm]
H_m-2H_{2m+1},&\qdn n=2m+1.
\end{cases}
 \enm

\begin{lemm}\label{lemm-c}
Let $a$, $b$ and $c$ be all complex numbers. Then
 \bnm
 \sum_{k=0}^{\infty}k\frac{(a)_k(b)_k(c)_k}{k!(\frac{1+a+b}{2})_k(2c-1)_k}
&&\xqdn\!=ab\frac{\Gamma(\frac{1}{2})\Gamma(\frac{1+a+b}{2})\Gamma(c-\frac{1}{2})\Gamma(c-\frac{3+a+b}{2})}
{\Gamma(\frac{1+a}{2})\Gamma(\frac{1+b}{2})\Gamma(c-\frac{1+a}{2})\Gamma(c-\frac{1+b}{2})}\\
&&\xqdn\!+\:\{2c^2-(a+b+3)c+(a+1)(b+1)\}\\
&&\xqdn\!\times\:\frac{\Gamma(\frac{1}{2})\Gamma(\frac{1+a+b}{2})\Gamma(c-\frac{1}{2})\Gamma(c-\frac{3+a+b}{2})}
{\Gamma(\frac{a}{2})\Gamma(\frac{b}{2})\Gamma(c-\frac{a}{2})\Gamma(c-\frac{b}{2})},
 \enm
where the convergence condition is $Re(2c-a-b-3)>0$.
\end{lemm}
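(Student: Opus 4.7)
My plan is to follow the same template as Lemma~\ref{lemm-b}'s proof --- index shift, Kummer's transformation, then Whipple's identity --- with one additional algebraic split to accommodate the extra factor $k$. The $k$ is first absorbed by the shift $k\mapsto k+1$:
\[
\sum_{k=0}^{\infty}k\,\frac{(a)_k(b)_k(c)_k}{k!(\frac{1+a+b}{2})_k(2c-1)_k}
=\frac{2abc}{(1+a+b)(2c-1)}\,{}_3F_2\ffnk{cccc}{1}{a+1,b+1,c+1}{\frac{3+a+b}{2},\,2c}.
\]
The resulting series is nearly of Lemma~\ref{lemm-b} type: the upper parameter $\frac{3+a+b}{2}$ is the Watson partner of $a+1,b+1$, but the lower parameter $2c=2(c+1)-2$ is off by $1$ from the Lemma~\ref{lemm-b} pattern.

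I then apply Kummer's transformation~\eqref{kummer} to this $_3F_2$ with $c+1$ put in the role of Kummer's ``$a$'', exactly as in Lemma~\ref{lemm-b}'s proof. After collecting the gamma prefactors, the right-hand side becomes a constant multiple of
\[
{}_3F_2\ffnk{cccc}{1}{A,\,c-1,\,-1-A}{c-\frac{1+b-a}{2},\,c-\frac{1+a-b}{2}},\qquad A:=\tfrac{1+a+b}{2}-c.
\]
Unlike the Lemma~\ref{lemm-b} situation, the first and third upper parameters now sum to $-1$ instead of $0$, so \eqref{whipple-a} does not apply directly.

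The new ingredient is the elementary Pochhammer identity
\[
(-1-A)_k=(-A)_k-k\,(-A)_{k-1}\qquad(k\ge 0),
\]
which, after shifting $k\mapsto k+1$ in the term $k(-A)_{k-1}$, splits the above $_3F_2$ as
\[
{}_3F_2\ffnk{cccc}{1}{A,-A,c-1}{C,D}\;-\;\frac{A(c-1)}{CD}\,{}_3F_2\ffnk{cccc}{1}{A+1,-A,c}{C+1,D+1},
\]
where $C=c-\frac{1+b-a}{2}$ and $D=c-\frac{1+a-b}{2}$. The first series fits \eqref{whipple-a} (since $A+(-A)=0$ and $C+D=2(c-1)+1$), while the second fits the original Whipple identity (since $(A+1)+(-A)=1$ and $(C+1)+(D+1)=1+2c$); both therefore admit closed gamma evaluations which can simply be read off.

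Assembling these evaluations with the Kummer and index-shift prefactors --- using $\Gamma(\frac{3+a+b}{2})=\frac{1+a+b}{2}\Gamma(\frac{1+a+b}{2})$, $\Gamma(c+1)=c\Gamma(c)$ and the Legendre duplication $\Gamma(2c-1)=4^{c-1}\Gamma(c-\tfrac12)\Gamma(c)/\sqrt{\pi}$ --- everything collapses neatly to the common factor $ab\,\Gamma(\tfrac12)\Gamma(\frac{1+a+b}{2})\Gamma(c-\tfrac12)\Gamma(c-\frac{3+a+b}{2})$ multiplied by one of the two gamma ratios in Lemma~\ref{lemm-c}. The main obstacle --- and the only step that is not pure bookkeeping --- is the final algebraic identification
\[
ab-(1+a+b-2c)(c-1)\,=\,2c^2-(a+b+3)c+(a+1)(b+1),
\]
which recovers the polynomial coefficient in front of the second gamma ratio and completes the match with the statement of Lemma~\ref{lemm-c}.
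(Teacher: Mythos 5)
Your proposal is correct, and it reaches Lemma~\ref{lemm-c} by a route that genuinely differs from the paper's in the one step that matters. Both arguments begin the same way: shift the index to absorb the factor $k$, obtaining $\tfrac{2abc}{(1+a+b)(2c-1)}\,{}_3F_2[1+a,1+b,1+c;\tfrac{3+a+b}{2},2c]$, then apply Kummer's transformation \eqref{kummer} with $1+c$ in the role of Kummer's first parameter, arriving at a gamma-factor multiple of ${}_3F_2[A,\,c-1,\,-1-A;\,C,\,D]$ with $A=\tfrac{1+a+b}{2}-c$, $C=c-\tfrac{1+b-a}{2}$, $D=c-\tfrac{1+a-b}{2}$. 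To evaluate this ``off-by-one Whipple'' series the paper first proves the contiguous identity \eqref{whipple-b} --- a $k$-free linear combination of \eqref{whipple-a} with its $a\to1+a$ shift --- and packages the outcome as the standalone evaluation \eqref{watson-a} of ${}_3F_2[a,b,c;\tfrac{1+a+b}{2},2c-2]$, which it then reuses in the proof of Lemma~\ref{lemm-d}. You instead split the series in place via $(-1-A)_k=(-A)_k-k(-A)_{k-1}$, producing one series evaluable by \eqref{whipple-a} and, after a further index shift with prefactor $\tfrac{A(c-1)}{CD}$, one evaluable by Whipple's original identity with all parameters raised by one. Your decomposition is sound: the two terms of \eqref{whipple-a} carry the denominators $\Gamma(\tfrac{a}{2})\Gamma(\tfrac{b}{2})\Gamma(c-\tfrac{a}{2})\Gamma(c-\tfrac{b}{2})$ and $\Gamma(\tfrac{1+a}{2})\Gamma(\tfrac{1+b}{2})\Gamma(c-\tfrac{1+a}{2})\Gamma(c-\tfrac{1+b}{2})$, while the extra Whipple term has denominator $\Gamma(1+\tfrac{a}{2})\Gamma(1+\tfrac{b}{2})\Gamma(c-\tfrac{a}{2})\Gamma(c-\tfrac{b}{2})$ and so merges with the first, collapsing three gamma ratios to the two appearing in the lemma; and your closing identity $ab-(1+a+b-2c)(c-1)=2c^2-(a+b+3)c+(a+1)(b+1)$ is verified by direct expansion. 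What you leave implicit is the gamma bookkeeping itself (the duplication formula and the rational prefactors), which you assert rather than carry out; the paper front-loads exactly that bookkeeping into \eqref{whipple-b} and \eqref{watson-a}, at the cost of an extra lemma but with the benefit that \eqref{watson-a} is available again for Lemma~\ref{lemm-d}, whereas your in-place splitting is shorter but single-use.
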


\begin{proof}
Employ the substitution $a\to 1+a$ in \eqref{whipple-a} to get
 \bnm
 _3F_2\ffnk{cccc}{1}{1+a,-1-a,b}{c,1+2b-c}&&\xqdn\!=\frac{\pi\Gamma(c)\Gamma(1+2b-c)}
{4^b\Gamma(\frac{1+a+c}{2})\Gamma(\frac{2+a-c}{2}+b)\Gamma(\frac{c-a}{2})\Gamma(b-\frac{a+c-1}{2})}
\\&&\xqdn\!+\:\frac{\pi\Gamma(c)\Gamma(1+2b-c)}
{4^b\Gamma(\frac{2+a+c}{2})\Gamma(\frac{3+a-c}{2}+b)\Gamma(\frac{c-a-1}{2})\Gamma(b-\frac{a+c}{2})}.
 \enm
The linear combination of \eqref{whipple-a} and the last equation
provides
 \bmn\label{whipple-b}
 \xqdn_3F_2\ffnk{cccc}{1}{a,-1-a,b}{c,1+2b-c}&&\xqdn\!=\frac{\pi\Gamma(c)\Gamma(1+2b-c)}
 {4^b\Gamma(\frac{1+a+c}{2})\Gamma(\frac{2+a-c}{2}+b)\Gamma(\frac{c-a}{2})\Gamma(\frac{1-a-c}{2}+b)}
 \nnm\\\nnm
 &&\xqdn\!+\:\frac{a^2+a+(1+2b-c)c}{(1+a+2b-c)(a+c)}
\\&&\xqdn\!\times\:\frac{\pi\Gamma(c)\Gamma(1+2b-c)}
{4^b\Gamma(\frac{a+c}{2})\Gamma(\frac{1+a-c}{2}+b)\Gamma(\frac{1-a+c}{2})\Gamma(\frac{2-a-c}{2}+b)}.
 \emn
In accordance with \eqref{kummer}, we gain
  \bnm
  \qqdn\xqdn_3F_2\ffnk{cccc}{1}{c,b,a}{\frac{1+a+b}{2},2c-2}
 &&\xqdn\!=\frac{\Gamma(\frac{1+a+b}{2})\Gamma(2c-2)\Gamma(c-\frac{3+a+b}{2})}{\Gamma(c)\Gamma(c-\frac{3+b-a}{2})\Gamma(c-\frac{3+a-b}{2})}\\
 &&\xqdn\!\times\:{_3F_2}\ffnk{cccc}{1}{\frac{1+a+b}{2}-c,c-2,c-\frac{3+a+b}{2}}{c-\frac{3+b-a}{2},c-\frac{3+a-b}{2}}.
 \enm
Computing the series on the right hand side by \eqref{whipple-b}, we
achieve
 \bmn\label{watson-a}
 _3F_2\ffnk{cccc}{1}{a,b,c}{\frac{1+a+b}{2},2c-2}
&&\xqdn\!=\frac{2}{c-1}\frac{\Gamma(\frac{1}{2})\Gamma(\frac{1+a+b}{2})\Gamma(c-\frac{1}{2})\Gamma(c-\frac{3+a+b}{2})}
{\Gamma(\frac{a}{2})\Gamma(\frac{b}{2})\Gamma(c-\frac{2+a}{2})\Gamma(c-\frac{2+b}{2})}
\nnm\\\nnm &&\xqdn\!+\:\frac{2c^2-(a+b+5)c+ab+a+b+3}{2(c-1)}
 \\&&\xqdn\!\times\:
\frac{\Gamma(\frac{1}{2})\Gamma(\frac{1+a+b}{2})\Gamma(c-\frac{1}{2})\Gamma(c-\frac{3+a+b}{2})}
{\Gamma(\frac{1+a}{2})\Gamma(\frac{1+b}{2})\Gamma(c-\frac{1+a}{2})\Gamma(c-\frac{1+b}{2})}.
 \emn
It is routine to show that
 \bnm
 \sum_{k=0}^{\infty}k\frac{(a)_k(b)_k(c)_k}{k!(\frac{1+a+b}{2})_k(2c-1)_k}
 &&\xqdn\!=\sum_{k=1}^{\infty}\frac{(a)_k(b)_k(c)_k}{(k-1)!(\frac{1+a+b}{2})_k(2c-1)_k}\\
 &&\xqdn\!=\sum_{k=0}^{\infty}\frac{(a)_{k+1}(b)_{k+1}(c)_{k+1}}{k!(\frac{1+a+b}{2})_{k+1}(2c-1)_{k+1}}\\
&&\xqdn\!=\frac{2abc}{(1+a+b)(2c-1)}{_3F_2}\ffnk{cccc}{1}{1+a,1+b,1+c}{\frac{3+a+b}{2},2c}.
 \enm
 Evaluating the
series on the right hand side by \eqref{watson-a}, we attain Lemma
\ref{lemm-c} to complete the proof.
\end{proof}

\begin{thm} \label{thm-b}
Let $x$ be a complex number. Then
 \bnm
&&\qqdn\sum_{k=0}^n(-1)^k\binm{n}{k}\frac{\binm{2x+n+k}{k}}{\binm{x+k}{k}}kH_{k}=\frac{n(2x+n+1)}{x+1}\\
&&\qqdn\,\,\times\begin{cases} H_{m-1}(x+1)+2H_{2m}-H_m,&\qdn n=2m;
\\[2mm]
H_m-2H_{2m}-H_m(x+1)-\frac{x+1}{(2m+1)(x+m+1)},&\qdn n=2m+1.
\end{cases}
 \enm
\end{thm}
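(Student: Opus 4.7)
My plan is to parallel the proof of Theorem \ref{thm-a}, replacing Lemma \ref{lemm-b} with Lemma \ref{lemm-c} so that the extra factor of $k$ built into the latter produces the required weight $kH_{k}$ on the left-hand side after differentiation in $y$.

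Concretely, I first specialize Lemma \ref{lemm-c} at $a=-n$, $b=2x+n+1$ and $c=y+1$. Converting each rising factorial via $(-n)_{k}=(-1)^{k}k!\binom{n}{k}$, $(y+1)_{k}=k!\binom{y+k}{k}$, and the analogous identities for the remaining factors, the series on the left becomes
\[\sum_{k=0}^{n}(-1)^{k}\,k\,\binom{n}{k}\frac{\binom{2x+n+k}{k}\binom{y+k}{k}}{\binom{x+k}{k}\binom{2y+k}{k}}.\]
The two summands on the right-hand side of Lemma \ref{lemm-c} contain the reciprocal gamma factors $1/\Gamma(\tfrac{1+a}{2})$ and $1/\Gamma(\tfrac{a}{2})$, which vanish, respectively, when $n$ is odd or even. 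Hence for $n=2m$ only the first summand (with the constant prefactor $ab=-2m(2x+2m+1)$) contributes, while for $n=2m+1$ only the second (with the $y$-dependent quadratic prefactor $2c^{2}-(a+b+3)c+(a+1)(b+1)$) contributes. In either case the surviving gamma quotient should simplify to a product of four binomial coefficients in $x$ and $y$, just as in the first step of the proof of Theorem \ref{thm-a}.

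Next I apply $\mathcal{D}_{y}$ to both sides of the resulting identity. On the left, Lemma \ref{lemm-a} gives
\[\mathcal{D}_{y}\frac{\binom{y+k}{k}}{\binom{2y+k}{k}}=\frac{\binom{y+k}{k}}{\binom{2y+k}{k}}\bigl\{H_{k}(y)-2H_{k}(2y)\bigr\},\]
which evaluates to $-H_{k}$ at $y=0$ and hence supplies (up to an overall sign) the target weight $kH_{k}$. On the right, Lemma \ref{lemm-a} governs the logarithmic derivative of the binomial quotient, while in the $n=2m+1$ case one must additionally differentiate the quadratic prefactor with respect to $y$. Setting $y=0$ and converting the half-integer harmonic numbers via $H_{m}(-\tfrac{1}{2})=2H_{2m}-H_{m}$ and its shift, together with $H_{m-1}(x+1)=H_{m}(x+1)-\tfrac{1}{x+m+1}$, should then collect the contributions into the closed forms of Theorem \ref{thm-b}. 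In the odd case the extra rational term $-\tfrac{x+1}{(2m+1)(x+m+1)}$ emerges from combining the derivative of the quadratic prefactor with the rearrangement between $H_{2m+1}$ and $H_{2m}$ and between $H_{m-1}(x+1)$ and $H_{m}(x+1)$.

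The principal obstacle is the algebraic bookkeeping on the right-hand side in the $n=2m+1$ case: the derivative of the quadratic prefactor must be combined cleanly with the logarithmic derivative of the surrounding gamma quotient and reshaped into the compact bracketed form of the theorem. In particular, the factorization of the quadratic prefactor at $y=0$ as $-2(2m+1)(x+m+1)$ is what, when divided by $x+1$ coming from the gamma quotient, produces the global prefactor $\tfrac{n(2x+n+1)}{x+1}$. Once these identifications are in place, matching each parity against the stated identity is a direct computation.
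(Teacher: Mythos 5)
Your proposal follows essentially the same route as the paper's own proof: it specializes Lemma \ref{lemm-c} at $a=-n$, $b=2x+n+1$, $c=y+1$, uses the vanishing of $1/\Gamma(\frac{1+a}{2})$ and $1/\Gamma(\frac{a}{2})$ to split into the even and odd cases, applies $\mathcal{D}_y$, and sets $y=0$, with the half-integer harmonic conversions and the factorization of the quadratic prefactor at $y=0$ as $-2(2m+1)(x+m+1)$ handled correctly. The plan is sound and matches the paper's argument step for step.
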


\begin{proof}
The case $a=-n$, $b=2x+n+1$ and $c=y+1$ of Lemma \ref{lemm-c} can be
written as
 \bnm
&&\xxqdn\sum_{k=0}^n(-1)^kk\binm{n}{k}\frac{\binm{2x+n+k}{k}\binm{y+k}{k}}{\binm{x+k}{k}\binm{2y+k}{k}}\\
&&\xxqdn\,\,=\:\begin{cases}
 \frac{2m(2x+2m+1)}{x-y+1}
\frac{\binm{x-y+m}{m}\binm{m-\frac{1}{2}}{m}}{\binm{x+m}{m}\binm{y+m-\frac{1}{2}}{m}},&\qdn
n=2m;
\\[3mm]
\frac{2\{y^2-xy-(2m+1)(x+m+1)\}}{(x-y+1)(2y+1)}
\frac{\binm{x-y+m}{m}\binm{m+\frac{1}{2}}{m}}{\binm{x+m}{m}\binm{y+m+\frac{1}{2}}{m}},&\qdn
n=2m+1.
\end{cases}
 \enm
 Applying the derivative operator $\mathcal{D}_y$ to both sides of
 it, we obtain
 \bnm
&&\xqdn
\sum_{k=0}^n(-1)^k\binm{n}{k}\frac{\binm{2x+n+k}{k}\binm{y+k}{k}}{\binm{x+k}{k}\binm{2y+k}{k}}
k\big\{2H_k(2y)-H_k(y)\big\}\\
 &&\xqdn\:=\:\begin{cases}
\frac{2m(2x+2m+1)}{x-y+1}\frac{\binm{x-y+m}{m}\binm{m-\frac{1}{2}}{m}}{\binm{x+m}{m}\binm{y+m-\frac{1}{2}}{m}}
\big\{H_{m-1}(x-y+1)+H_m(y-\frac{1}{2})\big\},&\qdn n=2m;
\\[3mm]
\frac{2\alpha_m(x,y)}{(x-y+1)(2y+1)}
\frac{\binm{x-y+m}{m}\binm{m+\frac{1}{2}}{m}}{\binm{x+m}{m}\binm{y+m+\frac{1}{2}}{m}}\big\{H_m(x-y)+H_m(y+\frac{1}{2})-\beta_m(x,y)\big\}
,&\qdn n=2m+1,
\end{cases}
 \enm
where the corresponding expressions are
 \bnm
&&\alpha_m(x,y)=y^2-xy-(2m+1)(x+m+1),\\
&&\beta_m(x,y)=\frac{m(2x+2m+3)(2x-4y+1)+(x-y+1)^2}{(x-y+1)(2y+1)\{y^2-xy-(2m+1)(x+m+1)\}}.
 \enm
 Selecting $y=0$ in the last
equation, we get Theorem \ref{thm-b} to finish the proof.
\end{proof}

Fixing $x=p$ with $p\in \mathbb{N}_0$ in Theorem \ref{thm-b}, we
deduce the summation formula on harmonic numbers.

\begin{corl} \label{corl-b}
Let $p$ be a nonnegative integer. Then
 \bnm
&&\xxqdn\sum_{k=0}^n(-1)^k\binm{n}{k}\binm{2p+n+k}{p+k}kH_{k}=n\binm{2p+n+1}{p+1}\\
&&\xxqdn\,\,\times\begin{cases} 2H_{2m}-H_m+H_{p+m}-H_{p+1},&\qdn
n=2m;
\\[2mm]
H_m-2H_{2m}+H_{p+1}-H_{p+m+1}-\frac{p+1}{(2m+1)(p+m+1)},&\qdn
n=2m+1.
\end{cases}
 \enm
\end{corl}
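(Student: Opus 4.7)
The plan is to derive Corollary \ref{corl-b} as a direct specialization of Theorem \ref{thm-b} to $x=p$ with $p\in\mathbb{N}_0$, exactly in the way that Corollary \ref{corl-a} follows from Theorem \ref{thm-a}. The first move is a purely binomial rewriting: using the standard absorption identity
\[
\binom{2p+n+k}{p+k}\binom{p+k}{k}=\binom{2p+n+k}{k}\binom{2p+n}{p},
\]
I can replace the quotient $\binom{2p+n+k}{k}/\binom{p+k}{k}$ that appears on the left-hand side of Theorem \ref{thm-b} by $\binom{2p+n+k}{p+k}/\binom{2p+n}{p}$, so that the left-hand side of Theorem \ref{thm-b} (with $x=p$) and the left-hand side of Corollary \ref{corl-b} differ only by the overall factor $\binom{2p+n}{p}$.

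Next, I will multiply the resulting identity through by $\binom{2p+n}{p}$. The scalar prefactor on the right-hand side of Theorem \ref{thm-b} then becomes
\[
\binom{2p+n}{p}\cdot\frac{n(2p+n+1)}{p+1}=n\binom{2p+n+1}{p+1},
\]
using the routine relation $(2p+n+1)\binom{2p+n}{p}=(p+1)\binom{2p+n+1}{p+1}$, which produces exactly the prefactor displayed in Corollary \ref{corl-b}.

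Finally, the generalized harmonic numbers $H_r(p+1)$ in the piecewise factor are translated into classical ones via
\[
H_r(p+1)=H_{p+1+r}-H_{p+1}.
\]
This yields $H_{m-1}(p+1)=H_{p+m}-H_{p+1}$ in the even case $n=2m$ and $H_m(p+1)=H_{p+m+1}-H_{p+1}$ in the odd case $n=2m+1$; substituting these back recovers the two branches listed in Corollary \ref{corl-b}. The argument is entirely bookkeeping and presents no genuine obstacle; the only mild nuisance is the odd case, where one must also carry along the rational tail $-\frac{p+1}{(2m+1)(p+m+1)}$ inherited directly from Theorem \ref{thm-b}, but this requires no additional manipulation.
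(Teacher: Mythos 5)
Your proposal is correct and coincides with the paper's own derivation, which simply specializes Theorem \ref{thm-b} at $x=p$; you have merely made explicit the bookkeeping (the absorption identity, the prefactor conversion $(2p+n+1)\binom{2p+n}{p}=(p+1)\binom{2p+n+1}{p+1}$, and the translation $H_r(p+1)=H_{p+1+r}-H_{p+1}$) that the paper leaves to the reader.
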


When $x\to\infty$, Theorem \ref{thm-b} produces the concise harmonic
 number identity:
\bnm
 \:\quad \sum_{k=0}^n(-2)^k\binm{n}{k}kH_{k}=
 \begin{cases}
 4m(2H_{2m}-H_m),&\qdn n=2m;
\\[2mm]
2(2m+1)(H_m-2H_{2m+1})+2,&\qdn n=2m+1.
\end{cases}
 \enm

\begin{lemm}\label{lemm-d}
Let $a$, $b$ and $c$ be all complex numbers. Then
 \bnm
 \sum_{k=0}^{\infty}k^2\frac{(a)_k(b)_k(c)_k}{k!(\frac{1+a+b}{2})_k(2c-1)_k}
&&\xqdn\!=\Phi(a,b,c)\frac{\Gamma(\frac{3}{2})\Gamma(\frac{1+a+b}{2})\Gamma(c-\frac{1}{2})\Gamma(c-\frac{5+a+b}{2})}
{\Gamma(\frac{1+a}{2})\Gamma(\frac{1+b}{2})\Gamma(c-\frac{1+a}{2})\Gamma(c-\frac{1+b}{2})}\\
&&\xqdn\!+\:\Psi(a,b,c)\frac{\Gamma(\frac{3}{2})\Gamma(\frac{1+a+b}{2})\Gamma(c-\frac{1}{2})\Gamma(c-\frac{5+a+b}{2})}
{\Gamma(\frac{a}{2})\Gamma(\frac{b}{2})\Gamma(c-\frac{a}{2})\Gamma(c-\frac{b}{2})},
 \enm
 where the symbols on the right hand side stand for
\bnm
 &&\xxqdn\Phi(a,b,c)=ab\{2c^2-(1+a+b)c+2(a+b+ab-1)\},\\
  &&\xxqdn\Psi(a,b,c)=4c^3+2(a+b+3ab-5)c^2\\
  &&\quad\:-\,(3a^2b+3ab^2+11ab+2a^2+2a+2b^2+2b-8)c\\
  &&\quad\:+\,2(1+a)(1+b)(a+b+ab-1)
 \enm
 and the parameters satisfy the condition $Re(2c-a-b-5)>0$.
\end{lemm}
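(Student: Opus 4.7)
My strategy is to mirror the proof of Lemma \ref{lemm-c} one level higher: write $k^{2}=k(k-1)+k$, invoke Lemma \ref{lemm-c} for the $k$-sum, and reduce the $k(k-1)$-sum, after the index shift $k\mapsto k+2$, to a Watson-type $_{3}F_{2}$ whose lower parameter is $2c+1=2(c+2)-3$. The first and principal task is therefore to derive a Watson-type closed form for
\[{_3F_2}\ffnk{cccc}{1}{a,b,c}{\frac{1+a+b}{2},2c-3},\]
by iterating once more the chain of manipulations that yielded \eqref{watson-a}.

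I would first extend \eqref{whipple-b} to a companion identity for ${_3F_2}\ffnk{cccc}{1}{a,-2-a,b}{c,1+2b-c}$. The algebraic core is the termwise decomposition
\[(a)_{k}(-2-a)_{k}=\frac{a+2}{2(a+1)}\,(a)_{k}(-1-a)_{k}+\frac{a}{2(a+1)}\,(1+a)_{k}(-2-a)_{k},\]
which follows at once from the elementary ratios $(-1-a)_{k}/(-2-a)_{k}=(a+2-k)/(a+2)$ and $(1+a)_{k}/(a)_{k}=(a+k)/a$. Inserting this decomposition under the summation sign expresses ${_3F_2}\ffnk{cccc}{1}{a,-2-a,b}{c,1+2b-c}$ as a linear combination of \eqref{whipple-b} and of \eqref{whipple-b} with $a\mapsto 1+a$, hence as a two-gamma-quotient evaluation. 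Next I apply Kummer's transformation \eqref{kummer} to ${_3F_2}\ffnk{cccc}{1}{c,b,a}{\frac{1+a+b}{2},2c-3}$: its transformed top row $\bigl(\frac{1+a+b}{2}-c,\,c-3,\,c-\frac{5+a+b}{2}\bigr)$ has first and third entries differing by $-2$, so the companion identity just derived evaluates it, producing the desired closed form for denominator $2c-3$.

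Equipped with this, I split
\[\sum_{k=0}^{\infty}k^{2}\frac{(a)_{k}(b)_{k}(c)_{k}}{k!(\frac{1+a+b}{2})_{k}(2c-1)_{k}}=\sum_{k=0}^{\infty}k(k-1)\frac{(a)_{k}(b)_{k}(c)_{k}}{k!(\frac{1+a+b}{2})_{k}(2c-1)_{k}}+\sum_{k=0}^{\infty}k\frac{(a)_{k}(b)_{k}(c)_{k}}{k!(\frac{1+a+b}{2})_{k}(2c-1)_{k}},\]
apply Lemma \ref{lemm-c} to the second sum, and shift the index $k\mapsto k+2$ in the first sum to rewrite it as
\[\frac{4\,a(a+1)b(b+1)c(c+1)}{(1+a+b)(3+a+b)(2c-1)(2c)}\,{_3F_2}\ffnk{cccc}{1}{a+2,b+2,c+2}{\frac{5+a+b}{2},2c+1},\]
which is exactly our new Watson-type series at $(a+2,b+2,c+2)$ since $2(c+2)-3=2c+1$. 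Adding the two contributions and rewriting all gamma products in the reference form of Lemma \ref{lemm-d} via $\Gamma(z+1)=z\Gamma(z)$ delivers the claimed two-term shape.

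The main obstacle is this final consolidation. The rational coefficients produced by Lemma \ref{lemm-c}, by the prefactor $4a(a+1)b(b+1)c(c+1)/[(1+a+b)(3+a+b)(2c-1)(2c)]$, and by the two-term expansion of the Watson-type formula for $2c-3$ must fuse into exactly the polynomials $\Phi(a,b,c)$ and $\Psi(a,b,c)$; the cubic-in-$c$ coefficient of the second gamma quotient in $\Psi$ is the sole nontrivial identity to check, and the cleanest verification is to treat both sides as rational functions of $c$ of bounded degree and compare after clearing denominators. Apart from this bookkeeping, the derivation follows the template of Lemmas \ref{lemm-b} and \ref{lemm-c} step for step.
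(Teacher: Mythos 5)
Your proposal is correct and follows essentially the same route as the paper: the paper likewise obtains the companion Whipple evaluation \eqref{whipple-c} for top parameters $(a,-2-a,b)$ as a linear combination of \eqref{whipple-b} and its $a\to 1+a$ shift (you merely make the combination coefficients explicit via the contiguous relation on shifted factorials), passes through Kummer's transformation \eqref{kummer} to the Watson-type evaluation \eqref{watson-b} with lower parameter $2c-3$, and splits the $k^{2}$-weighted sum into the $k$-part (Lemma \ref{lemm-c}, equivalently \eqref{watson-a}) plus a doubly shifted series handled by \eqref{watson-b}. The only slip is verbal: the first and third transformed numerator parameters are of the form $A$ and $-2-A$, i.e.\ they \emph{sum} to $-2$ rather than ``differ by $-2$'', which is precisely the pattern needed for the companion identity to apply.
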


\begin{proof}
Replace $a$ by $1+a$ in \eqref{whipple-b} to gain
 \bnm
\qdn_3F_2\ffnk{cccc}{1}{1+a,-2-a,b}{c,1+2b-c}&&\xqdn\!=\frac{\pi\Gamma(c)\Gamma(1+2b-c)}
{4^b\Gamma(\frac{2+a+c}{2})\Gamma(\frac{3+a-c}{2}+b)\Gamma(\frac{c-a-1}{2})\Gamma(b-\frac{a+c}{2})}
\nnm\\\nnm
 &&\xqdn\!+\:\frac{a^2+3a+2+(1+2b-c)c}{(2+a+2b-c)(1+a+c)}\\
 &&\xqdn\!\times\:
\frac{\pi\Gamma(c)\Gamma(1+2b-c)}
{4^b\Gamma(\frac{1+a+c}{2})\Gamma(\frac{2+a-c}{2}+b)\Gamma(\frac{c-a}{2})\Gamma(\frac{1-a-c}{2}+b)}.
 \enm
The linear combination of \eqref{whipple-b} and the last equation
offers
 \bmn\label{whipple-c}
&&\xxqdn_3F_2\ffnk{cccc}{1}{a,-2-a,b}{c,1+2b-c}
 \nnm\\\nnm
&&\xxqdn\:=\:\frac{(2+a)(1+a+b)+(1+2b-c)c}{(2+a+2b-c)(1+a+c)}
\frac{\pi\Gamma(c)\Gamma(1+2b-c)}
{4^b\Gamma(\frac{1+a+c}{2})\Gamma(\frac{2+a-c}{2}+b)\Gamma(\frac{c-a}{2})\Gamma(\frac{1-a-c}{2}+b)}
\\&&\xxqdn\:+\:\:\frac{a(1+a-b)+(1+2b-c)c}{(a-2b+c)(1+a-c)}\frac{\pi\Gamma(c)\Gamma(1+2b-c)}
{4^b\Gamma(\frac{2+a+c}{2})\Gamma(\frac{3+a-c}{2}+b)\Gamma(\frac{c-a-1}{2})\Gamma(b-\frac{a+c}{2})}.
 \emn
 According to \eqref{kummer}, we achieve
  \bnm
  _3F_2\ffnk{cccc}{1}{c,b,a}{\frac{1+a+b}{2},2c-3}
 &&\xqdn\!=\frac{\Gamma(\frac{1+a+b}{2})\Gamma(2c-3)\Gamma(c-\frac{5+a+b}{2})}{\Gamma(c)\Gamma(c-\frac{5+b-a}{2})\Gamma(c-\frac{5+a-b}{2})}\\
 &&\xqdn\!\times\:{_3F_2}\ffnk{cccc}{1}{\frac{1+a+b}{2}-c,c-3,c-\frac{5+a+b}{2}}{c-\frac{5+b-a}{2},c-\frac{5+a-b}{2}}.
 \enm
Reckoning the series on the right hand side by \eqref{whipple-c}, we
attain
 \bmn\label{watson-b}
 &&\xxqdn _3F_2\ffnk{cccc}{1}{a,b,c}{\frac{1+a+b}{2},2c-3}
  \nnm\\\nnm
&&\xxqdn\:=\:\Big\{1+\tfrac{3ab(c-3)}{(c-1)(2c-a-5)(2c-b-5)}\Big\}
\frac{\Gamma(\frac{1}{2})\Gamma(\frac{1+a+b}{2})\Gamma(c-\frac{5}{2})\Gamma(c-\frac{5+a+b}{2})}
{\Gamma(\frac{1+a}{2})\Gamma(\frac{1+b}{2})\Gamma(c-\frac{5+a}{2})\Gamma(c-\frac{5+b}{2})}
\\&&\xxqdn\:+\:\:
\tfrac{2\{6c^2-3c(a+b+7)+2ab+5a+5b+17\}}{(c-1)(2c-a-4)(2c-b-4)}
\frac{\Gamma(\frac{1}{2})\Gamma(\frac{1+a+b}{2})\Gamma(c-\frac{3}{2})\Gamma(c-\frac{5+a+b}{2})}
{\Gamma(\frac{a}{2})\Gamma(\frac{b}{2})\Gamma(c-\frac{4+a}{2})\Gamma(c-\frac{4+b}{2})}.
 \emn
It is easy to see that
 \bnm
 \sum_{k=0}^{\infty}k^2\frac{(a)_k(b)_k(c)_k}{k!(\frac{1+a+b}{2})_k(2c-1)_k}
 &&\xqdn\!=\sum_{k=1}^{\infty}k\frac{(a)_k(b)_k(c)_k}{(k-1)!(\frac{1+a+b}{2})_k(2c-1)_k}\\
 &&\xqdn\!=\sum_{k=0}^{\infty}(k+1)\frac{(a)_{k+1}(b)_{k+1}(c)_{k+1}}{k!(\frac{1+a+b}{2})_{k+1}(2c-1)_{k+1}}\\
&&\xqdn\!=\frac{2abc}{(1+a+b)(2c-1)}{_3F_2}\ffnk{cccc}{1}{1+a,1+b,1+c}{\frac{3+a+b}{2},2c}\\
 &&\xqdn\!+\frac{2abc}{(1+a+b)(2c-1)}
  \sum_{k=1}^{\infty}\frac{(1+a)_{k}(1+b)_{k}(1+c)_{k}}{(k-1)!(\frac{3+a+b}{2})_{k}(2c)_{k}}\\
&&\xqdn\!=\frac{2abc}{(1+a+b)(2c-1)}{_3F_2}\ffnk{cccc}{1}{1+a,1+b,1+c}{\frac{3+a+b}{2},2c}\\
&&\xqdn\!+\frac{2ab(1+a)(1+b)(1+c)}{(1+a+b)(3+a+b)(2c-1)}{_3F_2}\ffnk{cccc}{1}{2+a,2+b,2+c}{\frac{5+a+b}{2},1+2c}.
 \enm
 Calculating the two
series on the right hand side by \eqref{watson-a} and
\eqref{watson-b}, we obtain Lemma \ref{lemm-d} to complete the
proof.
\end{proof}

\begin{thm} \label{thm-c}
Let $x$ be a complex number. Then
 \bnm
&&\xxqdn\sum_{k=0}^n(-1)^k\binm{n}{k}\frac{\binm{2x+n+k}{k}}{\binm{x+k}{k}}k^2H_{k}=\frac{n(2x+n+1)(n^2+n+2nx-x)}{(x+1)(x+2)}\\
&&\xxqdn\,\,\times\begin{cases}
2H_{2m}-H_m+H_{m-2}(x+2)+\frac{1-x}{2m(2x+2m+1)-x},&\qdn n=2m;
\\[2mm]
H_m-2H_{2m}-H_{m-1}(x+2)-\frac{x+3}{2m(2x+2m+3)+x+2},&\qdn n=2m+1.
\end{cases}
 \enm
\end{thm}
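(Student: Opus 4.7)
The plan is to mirror the two-step strategy that established Theorems \ref{thm-a} and \ref{thm-b}. First I would specialize Lemma \ref{lemm-d} at $a=-n$, $b=2x+n+1$, $c=y+1$. The infinite series on the left collapses to the finite sum
\[
\sum_{k=0}^n(-1)^k k^2 \binm{n}{k}\frac{\binm{2x+n+k}{k}\binm{y+k}{k}}{\binm{x+k}{k}\binm{2y+k}{k}},
\]
while on the right the two Gamma-ratios involve $\Gamma(a/2)=\Gamma(-n/2)$ and $\Gamma((1+a)/2)=\Gamma((1-n)/2)$; for each parity of $n$ exactly one of these has a pole, killing one term and leaving a clean quotient of binomial coefficients. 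A short simplification of $\Phi(-n,2x+n+1,y+1)$ and $\Psi(-n,2x+n+1,y+1)$ factors out the prefactor $\frac{n(2x+n+1)(n^2+n+2nx-x)}{(x+1)(x+2)}$ that appears in the theorem, modulo $y$-dependent corrections.

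Next I would apply the derivative operator $\mathcal{D}_y$ to both sides. By Lemma \ref{lemm-a}, the only $y$-active factor in the summand on the left is $\binm{y+k}{k}/\binm{2y+k}{k}$, whose logarithmic derivative is $2H_k(2y)-H_k(y)$; at $y=0$ this collapses to exactly $H_k$, producing the desired left-hand side of Theorem \ref{thm-c}. On the right, $\mathcal{D}_y$ applied to the Gamma-ratios yields combinations of $H_j(x-y)$ and $H_j(y\pm\tfrac12)$ for various $j$ (the shifted indices $m-1$ and $m-2$ arise because the arguments in Lemma \ref{lemm-d} involve $c-3$ and $c-\tfrac{5+a+b}{2}$), while $\mathcal{D}_y$ applied to the rational prefactor supplies the residual rational terms $\frac{1-x}{2m(2x+2m+1)-x}$ and $\frac{x+3}{2m(2x+2m+3)+x+2}$. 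Setting $y=0$ then produces exactly the stated identity.

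I expect the conceptual steps to be straightforward, being structurally identical to those of Theorems \ref{thm-a} and \ref{thm-b}. The main obstacle will be the algebraic bookkeeping. The polynomials $\Phi$ and $\Psi$ in Lemma \ref{lemm-d} are cubic in $a,b,c$, and their values at $(-n,2x+n+1,y+1)$ give bulky expressions; substantial cancellation must occur after differentiation and specialization to $y=0$ in order that the result collapses to the compact right-hand side, with the index-shifted harmonic numbers $H_{m-2}(x+2)$ and $H_{m-1}(x+2)$ appearing with the correct rational correction. Verifying these cancellations and tracking the shifted indices without sign errors is the delicate part, but no new analytic ingredient is needed beyond Lemmas \ref{lemm-a} and \ref{lemm-d}.
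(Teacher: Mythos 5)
Your proposal follows the paper's own proof exactly: it specializes Lemma \ref{lemm-d} at $a=-n$, $b=2x+n+1$, $c=y+1$ (with the parity split for $n=2m$ and $n=2m+1$ coming from the poles of $\Gamma(\frac{a}{2})$ and $\Gamma(\frac{1+a}{2})$), applies $\mathcal{D}_y$ to both sides, and sets $y=0$. The remaining work is precisely the algebraic bookkeeping you flag, which is what the paper carries out explicitly via its intermediate functions $\gamma_m,\eta_m,\mu_m,\nu_m,\lambda_m,\omega_m$.
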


\begin{proof}
The case $a=-n$, $b=2x+n+1$ and $c=y+1$ of Lemma \ref{lemm-d} can be
manipulated as
 \bnm
\sum_{k=0}^n(-1)^kk^2\binm{n}{k}\frac{\binm{2x+n+k}{k}\binm{y+k}{k}}{\binm{x+k}{k}\binm{2y+k}{k}}
=\begin{cases}
 \gamma_m(x,y)
\frac{\binm{x-y+m}{m}\binm{m-\frac{1}{2}}{m}}{\binm{x+m}{m}\binm{y+m-\frac{1}{2}}{m}},&\qdn
n=2m;
\\[3mm]
\eta_m(x,y)
\frac{\binm{x-y+m}{m}\binm{m+\frac{1}{2}}{m}}{\binm{x+m}{m}\binm{y+m+\frac{1}{2}}{m}},&\qdn
n=2m+1,
\end{cases}
 \enm
where the corresponding expressions are
 \bnm
&&\gamma_m(x,y)=\frac{2m(2x+2m+1)\{4m^2+2(2x+1)m+xy-y^2-x-y\}}{(x-y+1)(x-y+2)},
\\
&&\eta_m(x,y)=\frac{2\{y^2-(m+1)(2m+1)-x(y+2m+1)\}}{(x-y+1)(2y+1)}
 \\&&\qquad\qquad+\:\frac{2m(2x+2m+3)\{3(y+2)(y-x-2)-2(m-1)(2x+2m+5)\}}{(x-y+1)(x-y+2)(2y+1)}.
 \enm
 Applying the derivative operator $\mathcal{D}_y$ to both sides of
 it, we get
 \bnm
&&\qqdn\xqdn\sum_{k=0}^n(-1)^k\binm{n}{k}\frac{\binm{2x+n+k}{k}\binm{y+k}{k}}{\binm{x+k}{k}\binm{2y+k}{k}}
k^2\big\{2H_k(2y)-H_k(y)\big\}\\
 &&\qqdn\xqdn\:=\:\begin{cases}
\mu_m(x,y)\frac{\binm{x-y+m}{m}\binm{m-\frac{1}{2}}{m}}{\binm{x+m}{m}\binm{y+m-\frac{1}{2}}{m}}
\big\{\sst H_{m-2}(x-y+2)+H_m(y-\frac{1}{2})-\nu_m(x,y)\big\},&\qdn
n=2m;
\\[3mm]
\lambda_m(x,y)\frac{\binm{x-y+m}{m}\binm{m+\frac{1}{2}}{m}}{\binm{x+m}{m}\binm{y+m+\frac{1}{2}}{m}}
\big\{\sst H_{m-2}(x-y+2)+H_{m+1}(y-\frac{1}{2})-\omega_m(x,y)\big\}
,&\qdn n=2m+1,
\end{cases}
 \enm
where the symbols on the right hand side stand for
 \bnm
&&\xxqdn\mu_m(x,y)=\frac{2m(2x+2m+1)\{x(y+4m-1)-y(y+1)+2m(2m+1)\}}{(x-y+1)(x-y+2)},\\
&&\xxqdn\nu_m(x,y)=\frac{x-2y-1}{x(y+4m-1)-y(y+1)+2m(2m+1)},\\
&&\xxqdn\lambda_m(x,y)=\frac{\begin{cases}
2m(2x+2m+3)\{3y^2-3xy-2(2m+1)(x+m+1)\}\\
+2(x-y+2)\{y^2-xy-(2m+1)(x+m+1)\}
\end{cases}}
{(x-y+1)(x-y+2)(2y+1)},\\
&&\xxqdn\omega_m(x,y)=\frac{m(3x-6y-1)(2x+2m+3)+x^2+x-1-4(x+1)y+3y^2}
{\begin{cases}
(2m+1)(x+m+1)(2+x+4mx+6m+4m^2)\\
-(1+3m+2m^2-x-x^2-6m^2x-7mx-6mx^2)y\\
-(2+9m+6m^2+2x+6mx)y^2+y^3
\end{cases}}.\\
 \enm
 Setting $y=0$ in the last
equation, we gain Theorem \ref{thm-c} to finish the proof.
\end{proof}

Choosing $x=p$ with $p\in \mathbb{N}_0$ in Theorem \ref{thm-c}, we
have the summation formula on harmonic numbers.
\begin{corl} \label{corl-c}
Let $p$ be a nonnegative integer. Then
 \bnm
&&\xqdn\xxqdn\sum_{k=0}^n(-1)^k\binm{n}{k}\binm{2p+n+k}{p+k}k^2H_{k}=\frac{n(n^2+n+2pn-p)}{p+2}\binm{2p+n+1}{p+1}\\
&&\xqdn\xxqdn\,\,\times\begin{cases}
2H_{2m}-H_m+H_{p+m}-H_{p+2}+\frac{1-p}{2m(2p+2m+1)-p},&\qdn n=2m;
\\[2mm]
H_m-2H_{2m}+H_{p+2}-H_{p+m+1}-\frac{p+3}{2m(2p+2m+3)+p+2},&\qdn
n=2m+1.
\end{cases}
 \enm
\end{corl}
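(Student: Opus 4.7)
The plan is to prove Corollary \ref{corl-c} by specializing $x=p$ with $p\in\mathbb{N}_0$ in Theorem \ref{thm-c} and performing two elementary rewrites: converting the ratio of binomial coefficients into a single binomial coefficient (up to a constant factor), and translating the generalized harmonic numbers into differences of classical harmonic numbers.

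First, I would clear the denominator on the left-hand side. The factorial identity
\[
\frac{\binom{2p+n+k}{k}}{\binom{p+k}{k}}=\frac{(2p+n+k)!\,p!}{(2p+n)!\,(p+k)!}=\frac{\binom{2p+n+k}{p+k}}{\binom{2p+n}{p}}
\]
holds for any $p\in\mathbb{N}_0$ and $0\le k\le n$. Multiplying Theorem \ref{thm-c} (with $x=p$) through by $\binom{2p+n}{p}$ turns the left-hand side into the sum $\sum_{k=0}^{n}(-1)^k\binom{n}{k}\binom{2p+n+k}{p+k}k^2 H_k$ appearing in the corollary, and the relation $\tfrac{2p+n+1}{p+1}\binom{2p+n}{p}=\binom{2p+n+1}{p+1}$ reduces the right-hand prefactor $\tfrac{n(2p+n+1)(n^2+n+2pn-p)}{(p+1)(p+2)}\binom{2p+n}{p}$ to $\tfrac{n(n^2+n+2pn-p)}{p+2}\binom{2p+n+1}{p+1}$, exactly as claimed.

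Next, I would rewrite the generalized harmonic numbers inside the bracketed factor. Since $H_r(y)=\sum_{j=1}^{r}\tfrac{1}{y+j}$, setting $y=p+2$ gives
\[
H_{m-2}(p+2)=H_{p+m}-H_{p+2},\qquad H_{m-1}(p+2)=H_{p+m+1}-H_{p+2}.
\]
Substituting these into the two branches of Theorem \ref{thm-c} produces $2H_{2m}-H_m+H_{p+m}-H_{p+2}+\tfrac{1-p}{2m(2p+2m+1)-p}$ in the $n=2m$ case and, after absorbing the leading minus sign in front of $H_{m-1}(x+2)$, the expression $H_m-2H_{2m}+H_{p+2}-H_{p+m+1}-\tfrac{p+3}{2m(2p+2m+3)+p+2}$ in the $n=2m+1$ case; the rational correction terms depend polynomially on $x$ and specialize directly under $x\mapsto p$ with no further work. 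The whole argument is pure bookkeeping, so there is no conceptual obstacle; the only care needed is to track the index offsets between $H_{m-2}(p+2)$, $H_{m-1}(p+2)$ and the classical harmonic numbers $H_{p+2}$, $H_{p+m}$, $H_{p+m+1}$, and to propagate the signs through the $n=2m+1$ branch correctly.
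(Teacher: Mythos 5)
Your proposal is correct and coincides with the paper's own derivation: the paper obtains Corollary \ref{corl-c} precisely by setting $x=p$ in Theorem \ref{thm-c}, and your two bookkeeping steps (the identity $\binom{2p+n+k}{k}/\binom{p+k}{k}=\binom{2p+n+k}{p+k}/\binom{2p+n}{p}$ together with $\tfrac{2p+n+1}{p+1}\binom{2p+n}{p}=\binom{2p+n+1}{p+1}$, and the translations $H_{m-2}(p+2)=H_{p+m}-H_{p+2}$, $H_{m-1}(p+2)=H_{p+m+1}-H_{p+2}$) are exactly the omitted details. All signs and index offsets check out against the stated corollary.
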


When $x\to\infty$, Theorem \ref{thm-c} creates the concise harmonic
 number identity:
\bnm
  \sum_{k=0}^n(-2)^k\binm{n}{k}k^2H_{k}=2n(2n-1)\times
 \begin{cases}
 2H_{2m}-H_m-\frac{1}{4m-1},&\qdn n=2m;
\\[2mm]
H_m-2H_{2m}-\frac{1}{4m+1},&\qdn n=2m+1.
\end{cases}
 \enm

\section{The second family of summation formulae involving\\ generalized harmonic numbers}
\begin{lemm}\label{lemm-e}
Let $a$ and $b$ be both complex numbers. Then
 \bnm
 _3F_2\ffnk{cccc}{1}{a,b-a,-n}{\frac{b}{2},-2n}
=\frac{a}{b}\frac{(\frac{2+a}{2})_n(\frac{1+b-a}{2})_n}{(\frac{1}{2})_n(\frac{2+b}{2})_n}
+\frac{b-a}{b}\frac{(\frac{1+a}{2})_n(\frac{2+b-a}{2})_n}{(\frac{1}{2})_n(\frac{2+b}{2})_n}.
 \enm
\end{lemm}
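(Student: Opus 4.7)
The strategy is to reduce Lemma~\ref{lemm-e} to two direct applications of Watson's terminating identity \eqref{watson-ter}, joined by a short termwise check.

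First, I would recognise each of the two products of Pochhammer symbols on the right-hand side of Lemma~\ref{lemm-e} as a Watson evaluation. Applying \eqref{watson-ter} with the replacement $(a,b)\mapsto(a+1,b-a)$ (so that $\frac{1+a+b}{2}$ becomes $\frac{2+b}{2}$) produces
\[
 {_3F_2}\ffnk{cccc}{1}{a+1,b-a,-n}{\frac{2+b}{2},-2n}
 =\frac{(\frac{2+a}{2})_n(\frac{1+b-a}{2})_n}{(\frac{1}{2})_n(\frac{2+b}{2})_n},
\]
and applying \eqref{watson-ter} with $(a,b)\mapsto(a,b-a+1)$ produces
\[
 {_3F_2}\ffnk{cccc}{1}{a,b-a+1,-n}{\frac{2+b}{2},-2n}
 =\frac{(\frac{1+a}{2})_n(\frac{2+b-a}{2})_n}{(\frac{1}{2})_n(\frac{2+b}{2})_n}.
\]

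Consequently, it suffices to prove the series identity
\[
 {_3F_2}\ffnk{cccc}{1}{a,b-a,-n}{\frac{b}{2},-2n}
 =\frac{a}{b}\,{_3F_2}\ffnk{cccc}{1}{a+1,b-a,-n}{\frac{2+b}{2},-2n}
 +\frac{b-a}{b}\,{_3F_2}\ffnk{cccc}{1}{a,b-a+1,-n}{\frac{2+b}{2},-2n}.
\]
I would verify this term by term. Using the elementary relations $(a+1)_k=\frac{a+k}{a}(a)_k$ and $(b-a+1)_k=\frac{b-a+k}{b-a}(b-a)_k$, the combined $k$-th summand on the right equals
\[
 \frac{(a)_k(b-a)_k(-n)_k}{k!\,(-2n)_k\,(\frac{2+b}{2})_k}\cdot\frac{(a+k)+(b-a+k)}{b}
 =\frac{(a)_k(b-a)_k(-n)_k}{k!\,(-2n)_k\,(\frac{2+b}{2})_k}\cdot\frac{b+2k}{b}.
\]
The Pochhammer relation $(\frac{2+b}{2})_k=\frac{b+2k}{b}(\frac{b}{2})_k$ (which follows from $(\frac{b}{2})_{k+1}=(\frac{b}{2})(\frac{b}{2}+1)_k=(\frac{b}{2}+k)(\frac{b}{2})_k$) then collapses this to the $k$-th summand of the left-hand series, finishing the verification.

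The main obstacle is conceptual rather than computational: the key step is spotting that the right-hand side of Lemma~\ref{lemm-e} decomposes as a Watson-plus-Watson combination with shifted denominator parameter $\frac{2+b}{2}$ in place of $\frac{b}{2}$, weighted so that the numerator shift $(a+k)+(b-a+k)=b+2k$ exactly compensates for the Pochhammer shift. Once that decomposition is noticed, the rest is a one-line algebraic check and no analytic machinery is required.
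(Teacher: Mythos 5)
Your proof is correct and follows essentially the same route as the paper: two instances of the terminating Watson identity \eqref{watson-ter} with shifted parameters, combined linearly with weights $\tfrac{a}{b}$ and $\tfrac{b-a}{b}$. The only difference is that you spell out the termwise verification of the contiguous relation (via $(a+k)+(b-a+k)=b+2k$ and $(\tfrac{2+b}{2})_k=\tfrac{b+2k}{b}(\tfrac{b}{2})_k$), which the paper leaves implicit in the phrase ``the linear combination of the last two equations.''
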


\begin{proof}
 Perform the replacement $b\to 1+b-a$ in \eqref{watson-ter} to
 achieve
 \bnm
 _3F_2\ffnk{cccc}{1}{a,1+b-a,-n}{1+\frac{b}{2},-2n}
=\frac{(\frac{1+a}{2})_n(\frac{2+b-a}{2})_n}{(\frac{1}{2})_n(\frac{2+b}{2})_n}.
 \enm
Replace $a$ by $1+a$ in it to attain
 \bnm
 _3F_2\ffnk{cccc}{1}{1+a,b-a,-n}{1+\frac{b}{2},-2n}
=\frac{(\frac{2+a}{2})_n(\frac{1+b-a}{2})_n}{(\frac{1}{2})_n(\frac{2+b}{2})_n}.
 \enm
 The linear combination of the last two equations
gives Lemma \ref{lemm-e}.
\end{proof}

\begin{thm} \label{thm-d}
Let $x$ be a complex number. Then
 \bnm
&&\xxqdn\qqdn\sum_{k=0}^n\binm{2n-k}{n}\binm{x+k}{k}H_{k}^{\langle2\rangle}(x)=\frac{\binm{x+2n+1}{n}}{8}\\
&&\xxqdn\qqdn\,\,\times\:\Big\{4H_{2n}^{\langle2\rangle}(x+1)
-\big[H_n(\tfrac{x}{2})-H_n(\tfrac{x+1}{2})\big]\big[H_n(\tfrac{x}{2})-H_n(\tfrac{x+1}{2})-\tfrac{4}{x+1}\big]\Big\}.
 \enm
\end{thm}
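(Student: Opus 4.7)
The plan is to specialize Lemma \ref{lemm-e} by setting $a = x+1+y$ and $b = 2x+2$, so that $b-a = x+1-y$ and $b/2 = x+1$, and then to apply the operator $-\tfrac{1}{2}\,\partial^{2}/\partial y^{2}$ evaluated at $y=0$ to both sides. Using $(-n)_k/(-2n)_k = \binom{2n-k}{n}/\binom{2n}{n}$, the summand on the left-hand side becomes
\[
\frac{(x+1+y)_k(x+1-y)_k}{k!\,(x+1)_k} \;=\; \frac{1}{k!\,(x+1)_k}\prod_{j=1}^{k}\bigl((x+j)^{2}-y^{2}\bigr),
\]
which is an even function of $y$ equal to $\binom{x+k}{k}$ at $y=0$, and whose second derivative there is $-2\binom{x+k}{k}H_{k}^{\langle 2\rangle}(x)$ by logarithmic differentiation (the first derivative vanishing by parity). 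Hence $-\tfrac12\,\partial^{2}/\partial y^{2}|_{y=0}$ turns the left-hand side into $\binom{2n}{n}^{-1}\sum_{k}\binom{2n-k}{n}\binom{x+k}{k}H_{k}^{\langle 2\rangle}(x)$, exactly the sum to be evaluated.

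On the right-hand side, introduce $P(y) := (\tfrac{x+3+y}{2})_{n}(\tfrac{x+2-y}{2})_{n}$; the two summands of Lemma \ref{lemm-e} combine into
\[
\frac{(x+1+y)\,P(y) + (x+1-y)\,P(-y)}{2(x+1)(\tfrac12)_{n}(x+2)_{n}},
\]
whose numerator is manifestly even in $y$. A Taylor expansion shows that its second derivative at $y=0$ is $2\bigl\{(x+1)\,P''(0) + 2P'(0)\bigr\}$, so only $P'(0)$ and $P''(0)$ have to be evaluated.

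These two quantities are computed via logarithmic differentiation: one finds $P'(0)/P(0) = \tfrac12[H_n(\tfrac{x+1}{2}) - H_n(\tfrac{x}{2})]$, and using $\mathcal{D}_{x}H_{n}^{\langle 1\rangle}(x) = -H_{n}^{\langle 2\rangle}(x)$ one obtains $(\log P)''(0) = -\tfrac14[H_{n}^{\langle 2\rangle}(\tfrac{x}{2}) + H_{n}^{\langle 2\rangle}(\tfrac{x+1}{2})]$. The crucial algebraic input is the duplication identity
\[
H_{n}^{\langle 2\rangle}(\tfrac{x}{2}) + H_{n}^{\langle 2\rangle}(\tfrac{x+1}{2}) \;=\; 4H_{2n}^{\langle 2\rangle}(x+1),
\]
which is immediate after writing $H_{2n}^{\langle 2\rangle}(x+1) = \sum_{k=2}^{2n+1}(x+k)^{-2}$ and separating the even- and odd-indexed terms.

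Finally, the prefactor collapses by means of $(\tfrac{x+3}{2})_{n}(\tfrac{x+2}{2})_{n} = 4^{-n}(x+2)_{2n}$ and $(\tfrac12)_{n} = (2n)!/(4^{n}n!)$, giving $\binom{2n}{n}\,P(0)/[(\tfrac12)_{n}(x+2)_{n}] = \binom{x+2n+1}{n}$. Writing $D := H_n(\tfrac{x}{2}) - H_n(\tfrac{x+1}{2})$ and assembling everything reproduces the closed form $\tfrac18\binom{x+2n+1}{n}\bigl\{4H_{2n}^{\langle 2\rangle}(x+1) - D\bigl(D - \tfrac{4}{x+1}\bigr)\bigr\}$ of Theorem \ref{thm-d}. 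The main obstacle is purely bookkeeping: tracking the signs and factors of $\tfrac12$ through two successive differentiations; the parity of the numerator $(x+1+y)P(y)+(x+1-y)P(-y)$ together with the duplication identity is what forces the second-order harmonic number $H_{2n}^{\langle 2\rangle}(x+1)$ to emerge so cleanly.
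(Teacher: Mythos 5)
Your proposal is correct; every step checks out (the parity argument for the left-hand summand, the identity $N''(0)=2\{(x+1)P''(0)+2P'(0)\}$ for the even numerator, the logarithmic derivatives of $P$, the duplication identity $H_n^{\langle2\rangle}(\tfrac{x}{2})+H_n^{\langle2\rangle}(\tfrac{x+1}{2})=4H_{2n}^{\langle2\rangle}(x+1)$, and the collapse of the prefactor to $\binom{x+2n+1}{n}$), and assembling the pieces does reproduce the stated closed form. It rests on the same key input as the paper, namely Lemma \ref{lemm-e}, and on the same underlying idea of extracting $H_k^{\langle2\rangle}(x)$ by a second-order differentiation of the parametrized Watson sum, but the execution is genuinely different. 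The paper substitutes $a=x+1$, $b=y+2$, applies $\mathcal{D}_x$ once to produce $H_k(x)-H_k(y-x)=\sum_{i\le k}\tfrac{y-2x}{(x+i)(y-x+i)}$, then divides by $y-2x$ and takes the limit $y\to 2x$ via L'H\^{o}spital, which requires computing $\mathcal{D}_y$ of a fairly involved expression $\theta_n(x,y)$ at $y=2x$. You instead parametrize along the symmetric direction $a=x+1+y$, $b=2x+2$ and apply the single operator $-\tfrac12\,\partial_y^2|_{y=0}$; the evenness in $y$ of both sides kills the first-order terms automatically, so no limit or L'H\^{o}spital step is needed, and the second-order harmonic number enters transparently through $(\log P)''(0)$ and the duplication identity rather than through the limit of a difference quotient. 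Your route is the cleaner of the two for this particular theorem; the paper's two-variable scheme has the advantage that its intermediate identity (the one before the limit $y\to2x$) is itself a free-standing two-parameter harmonic-number identity, and the same scaffolding is reused verbatim for Theorems \ref{thm-e} and \ref{thm-f}.
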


\begin{proof}
The case $a=x+1$ and $b=y+2$ of Lemma \ref{lemm-e} reads as
 \bmn\label{watson-c}
\xqdn\xqdn\sum_{k=0}^n\frac{\binm{x+k}{k}\binm{y-x+k}{k}\binm{2n-k}{n}}{\binm{\frac{y}{2}+k}{k}}
&&\xqdn\!\!=\frac{4^n(x+1)}{y+2}\frac{\binm{\frac{x+1}{2}+n}{n}\binm{\frac{y-x}{2}+n}{n}}
 {\binm{\frac{y+2}{2}+n}{n}}
\nnm\\&&\xqdn\!
+\:\frac{4^n(y-x+1)}{y+2}\frac{\binm{\frac{x}{2}+n}{n}\binm{\frac{y-x+1}{2}+n}{n}}
 {\binm{\frac{y+2}{2}+n}{n}}.
 \emn
 Applying the derivative operator $\mathcal{D}_x$ to both sides of
 \eqref{watson-c}, we obtain
 \bnm
&&\xqdn\sum_{k=0}^n\frac{\binm{x+k}{k}\binm{y-x+k}{k}\binm{2n-k}{n}}{\binm{\frac{y}{2}+k}{k}}
\big\{H_k(x)-H_k(y-x)\big\}=\frac{4^n}{y+2}\frac{1}{\binm{\frac{y+2}{2}+n}{n}}\\
&&\xqdn\:\times\:
\bigg\{\binm{\frac{x+1}{2}+n}{n}\binm{\frac{y-x}{2}+n}{n}-\binm{\frac{x}{2}+n}{n}\binm{\frac{y-x+1}{2}+n}{n}
+\theta_n(x,y)\bigg\},
 \enm
where the corresponding expression is
 \bnm
\qqdn\theta_n(x,y)&&\xqdn\!=\frac{(x+1)\binm{\frac{x+1}{2}+n}{n}\binm{\frac{y-x}{2}+n}{n}\big[H_n(\tfrac{x+1}{2})-H_n(\tfrac{y-x}{2})\big]}{2}\\
&&\xqdn\!+\,\frac{(y-x+1)\binm{\frac{x}{2}+n}{n}\binm{\frac{y-x+1}{2}+n}{n}\big[H_n(\tfrac{x}{2})-H_n(\tfrac{y-x+1}{2})\big]}{2}.
 \enm
The last equation can be reformulated as
 \bnm
&&\xqdn\sum_{k=0}^n\frac{\binm{x+k}{k}\binm{y-x+k}{k}\binm{2n-k}{n}}{\binm{\frac{y}{2}+k}{k}}
\sum_{i=1}^k\frac{1}{(x+i)(y-x+i)}
  \\&&\xqdn\:=\:
\frac{4^n}{y+2}\frac{1}{\binm{\frac{y+2}{2}+n}{n}}
\bigg\{\frac{\binm{\frac{x+1}{2}+n}{n}\binm{\frac{y-x}{2}+n}{n}-\binm{\frac{x}{2}+n}{n}\binm{\frac{y-x+1}{2}+n}{n}}{y-2x}
+\frac{\theta_n(x,y)}{y-2x}\bigg\}.
 \enm
Finding the limit $y\to2x$ of it and using the two relations
 \bnm
&&\text{Lim}_{y\to2x}\frac{\binm{\frac{x+1}{2}+n}{n}\binm{\frac{y-x}{2}+n}{n}-\binm{\frac{x}{2}+n}{n}\binm{\frac{y-x+1}{2}+n}{n}}{y-2x}\\
&&\:\:=\:\text{Lim}_{y\to2x}\mathcal{D}_y\bigg\{\binm{\frac{x+1}{2}+n}{n}\binm{\frac{y-x}{2}+n}{n}
-\binm{\frac{x}{2}+n}{n}\binm{\frac{y-x+1}{2}+n}{n}\bigg\}\\
&&\:\:=\:\frac{1}{2}\binm{\frac{x}{2}+n}{n}\binm{\frac{x+1}{2}+n}{n}
\big[H_n(\tfrac{x}{2})-H_n(\tfrac{x+1}{2})\big],\\
&&\text{Lim}_{y\to2x}\frac{\theta_n(x,y)}{y-2x}\\
&&\:\:=\:\text{Lim}_{y\to2x}\mathcal{D}_y\theta_n(x,y)\\
&&\:\:=\:\frac{1}{2}\binm{\frac{x}{2}+n}{n}\binm{\frac{x+1}{2}+n}{n}
\big[H_n(\tfrac{x}{2})-H_n(\tfrac{x+1}{2})\big]\\
&&\:\:+\:\:\frac{x+1}{4}\binm{\frac{x}{2}+n}{n}\binm{\frac{x+1}{2}+n}{n}
\Big\{4H^{\langle2\rangle}_{2n}(x+1)-\big[H_n(\tfrac{x}{2})-H_n(\tfrac{x+1}{2})\big]^2\Big\}
 \enm
from L'H\^{o}spital rule, we get Theorem \ref{thm-d} to complete the
proof.
\end{proof}

Taking $x=p$ with $p\in \mathbb{N}_0$ in Theorem \ref{thm-d} and
utilizing \eqref{watson-c}, we deduce the summation formula on
harmonic numbers of 2-order.

\begin{corl} \label{corl-d}
Let $p$ be a nonnegative integer. Then
 \bnm
\sum_{k=0}^n\binm{2n-k}{n}\binm{p+k}{k}H_{p+k}^{\langle2\rangle}
=\frac{\binm{p+2n+1}{n}}{2}\Big\{H_{p+2n+1}^{\langle2\rangle}+H_{p}^{\langle2\rangle}-A_n(p)-\tfrac{1}{(p+1)^2}\Big\},
 \enm
where the symbol on the right hand side stands for
 \bnm
 A_n(p)=
 \begin{cases}
 (H_{2q+2n+1}-H_{q+n}-H_{2q+1}+H_q)\\\times(H_{2q+2n+1}-H_{q+n}-H_{2q+1}+H_q+\frac{2}{2q+1}),&\qdn p=2q;
\\[2mm]
(H_{2q+2n+2}-H_{q+n+1}-H_{2q+2}+H_{q+1})\\\times(H_{2q+2n+2}-H_{q+n+1}-H_{2q+2}+H_{q}),&\qdn
p=2q+1.
\end{cases}
 \enm
\end{corl}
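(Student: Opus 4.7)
The plan is to specialize Theorem \ref{thm-d} at $x=p$ with $p\in\mathbb{N}_0$ and then translate every shifted harmonic quantity into classical harmonic numbers.

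First, since $p$ is a nonnegative integer we have $H_k^{\langle 2\rangle}(p)=H_{p+k}^{\langle 2\rangle}-H_p^{\langle 2\rangle}$, so the left side of Theorem \ref{thm-d} differs from the desired sum by the constant factor $-H_p^{\langle 2\rangle}\sum_{k=0}^n\binom{2n-k}{n}\binom{p+k}{k}$. To evaluate this companion sum I set $y=2x$ in \eqref{watson-c}: the two terms on its right side coincide, and after invoking the Legendre duplication $(p+2)_{2n}=4^n(p/2+1)_n((p+3)/2)_n$ one obtains
\[\sum_{k=0}^n\binom{2n-k}{n}\binom{p+k}{k}=\binom{p+2n+1}{n}.\]
In addition, straight from the definition, $H_{2n}^{\langle 2\rangle}(p+1)=H_{p+2n+1}^{\langle 2\rangle}-H_p^{\langle 2\rangle}-1/(p+1)^2$.

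Second, I rewrite the bracket $D:=H_n(p/2)-H_n((p+1)/2)$ in closed form, and here the parity of $p$ becomes decisive. Splitting $H_{p+2n+1}-H_{p+1}$ into its even-indexed and odd-indexed pieces and telescoping yields
\[D=\begin{cases}-2\bigl(H_{2q+2n+1}-H_{q+n}-H_{2q+1}+H_q\bigr),&p=2q,\\[1mm]\phantom{-}2\bigl(H_{2q+2n+2}-H_{q+n+1}-H_{2q+2}+H_{q+1}\bigr),&p=2q+1.\end{cases}\]
Substituting these three ingredients into Theorem \ref{thm-d} and using $H_{2n}^{\langle 2\rangle}(p+1)$ to cancel $H_{p+2n+1}^{\langle 2\rangle}$ on both sides, the assertion reduces to the purely algebraic identity
\[A_n(p)=\tfrac14\,D\bigl(D-4/(p+1)\bigr).\]
Plugging in the formula for $D$ in each parity case gives precisely the piecewise expression in the statement; for $p=2q+1$ the subtraction $-1/(q+1)$ inside the second factor is exactly what turns $H_{q+1}$ into $H_q$.

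The only delicate step is the parity-dependent closed form for $D$ in step two. Beyond that, everything is routine bookkeeping of $1/(p+1)$ and $1/(p+1)^2$ corrections, so I expect no essential obstacle once the two telescoping identities for $D$ are in hand.
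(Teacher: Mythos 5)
Your proposal is correct and is exactly the route the paper takes: the paper's entire justification is ``take $x=p$ in Theorem \ref{thm-d} and use \eqref{watson-c}'', and you have supplied precisely the missing bookkeeping --- the companion sum $\sum_k\binom{2n-k}{n}\binom{p+k}{k}=\binom{p+2n+1}{n}$, the translation $H_{2n}^{\langle2\rangle}(p+1)=H_{p+2n+1}^{\langle2\rangle}-H_p^{\langle2\rangle}-1/(p+1)^2$, and the parity-split closed form for $H_n(p/2)-H_n((p+1)/2)$, all of which check out (including the $-1/(q+1)$ absorption that produces $H_q$ in the odd case). No discrepancies with the paper.
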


When $p=0$, Corollary \ref{corl-d} reduces to the concise harmonic
 number identity:
\bnm
 \sum_{k=0}^n\binm{2n-k}{n}H_{k}^{\langle2\rangle}
=\frac{\binm{2n+1}{n}}{2}\big\{H_{2n+1}^{\langle2\rangle}-(H_{2n+1}-H_n)^2\big\}.
 \enm

\begin{lemm}\label{lemm-f}
Let $a$ and $b$ be both complex numbers. Then
 \bnm
\xxqdn
\sum_{k=0}^{n}k\frac{(a)_k(b-a)_k(-n)_k}{k!(\frac{b}{2})_k(-2n)_k}
&&\xqdn\!=\frac{a(b-a)(2a-b+2n)}{b(b+2)}\frac{(\frac{1+a}{2})_n(\frac{2+b-a}{2})_n}{(\frac{1}{2})_n(\frac{4+b}{2})_n}\\
&&\xqdn\!+\:\frac{a(b-a)(b-2a+2n)}{b(b+2)}\frac{(\frac{2+a}{2})_n(\frac{1+b-a}{2})_n}{(\frac{1}{2})_n(\frac{4+b}{2})_n}.
 \enm
\end{lemm}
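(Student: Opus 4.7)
My plan follows the template of Lemma~\ref{lemm-c}: perform an index shift, then evaluate the resulting ${}_3F_2$. Setting $k=j+1$ and using $(a)_{j+1}=a(1+a)_j$, $(-n)_{j+1}=-n(1-n)_j$, $(-2n)_{j+1}=-2n(1-2n)_j$, and analogous factorizations, the leading constants collapse via $a(b-a)(-n)/((b/2)(-2n))=a(b-a)/b$, giving
\begin{align*}
\sum_{k=0}^{n} k\,\frac{(a)_k(b-a)_k(-n)_k}{k!(b/2)_k(-2n)_k} \;=\; \frac{a(b-a)}{b}\,\mathcal{F}(a,b,n),
\end{align*}
where $\mathcal{F}(a,b,n)={}_3F_2(1+a,\,1+b-a,\,1-n;\,1+b/2,\,1-2n;\,1)$ terminates at $k=n-1$. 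The problem reduces to evaluating $\mathcal{F}$.

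The lower parameter $1-2n$ is off by one from the form $-2m$ required to apply Lemma~\ref{lemm-e} directly. To bridge this, I use the identity $(1-2n)_k(k+1-2n)=(1-2n)(2-2n)_k$, which gives $\frac{1}{(1-2n)_k}=\frac{k+1-2n}{(1-2n)(2-2n)_k}$. Splitting $k+1-2n=k+(1-2n)$ decomposes
\begin{align*}
 \mathcal{F}(a,b,n) \;=\; U(a,b,n) + \frac{V(a,b,n)}{1-2n},
\end{align*}
where $U(a,b,n)={}_3F_2(1+a,1+b-a,1-n;1+b/2,2-2n;1)$ and $V(a,b,n)=\sum_k k\,\frac{(1+a)_k(1+b-a)_k(1-n)_k}{k!(1+b/2)_k(2-2n)_k}$. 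The key observations are that $U(a,b,n)$ is evaluated in closed form by Lemma~\ref{lemm-e} under the substitution $(a,b,n)\to(1+a,b+2,n-1)$, while $V(a,b,n)$ coincides with the left-hand side of Lemma~\ref{lemm-f} itself at those same shifted parameters. Denoting the left-hand side of Lemma~\ref{lemm-f} by $S(a,b,n)$, this yields the recursion
\begin{align*}
 S(a,b,n) \;=\; \frac{a(b-a)}{b}\,U(a,b,n) + \frac{a(b-a)}{b(1-2n)}\,S(1+a,b+2,n-1).
\end{align*}

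I then induct on $n$. The base case $n=0$ is immediate since both sides vanish. For the inductive step, substitute the Lemma~\ref{lemm-e} evaluation of $U$ together with the inductive hypothesis for $S(1+a,b+2,n-1)$ into the recursion, and apply standard Pochhammer shift identities such as $(\tfrac{1+a}{2})_n=\tfrac{1+a}{2}(\tfrac{3+a}{2})_{n-1}$, $(\tfrac{2+b-a}{2})_n=(\tfrac{2+b-a}{2})_{n-1}\cdot\tfrac{2n+b-a}{2}$, $(\tfrac12)_n=(\tfrac12)_{n-1}\cdot\tfrac{2n-1}{2}$, and $(\tfrac{4+b}{2})_n=(\tfrac{4+b}{2})_{n-1}\cdot\tfrac{2n+b+2}{2}$, in order to convert the $(n-1)$-indexed Pochhammers appearing on the right into the $n$-indexed Pochhammers of the target formula.

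The main obstacle is the algebraic simplification in the inductive step. The substitution initially produces four Pochhammer-ratio summands (two from $U$, two from the inductive hypothesis for $V$), and one must show they collapse to the two-term closed form of Lemma~\ref{lemm-f}, with the coefficients $(2a-b+2n)$ and $(b-2a+2n)$ emerging only after extensive cancellation. A supporting identity that appears in the verification of the $n=1$ case, namely $(2a-b+2)(1+a)(2+b-a)+(b-2a+2)(2+a)(1+b-a)=(b+2)(b+4)$, illustrates the sort of polynomial rewriting required to complete the reduction.
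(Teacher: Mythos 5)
Your proof is correct, but it takes a genuinely different route from the paper's. The paper evaluates the shifted series ${}_3F_2[1+a,\,1+b-a,\,1-n;\,\tfrac{2+b}{2},\,1-2n;\,1]$ by first establishing \eqref{watson-d} --- the Watson-type sum with lower parameter $-2n-1$ --- as the $q\to1$ limit of a $_4\phi_3$ identity quoted from Wei, Gong and Li, linearly combined with its $a\to1+a$ companion; after the same index shift you perform, Lemma~\ref{lemm-f} then follows from a single substitution into \eqref{watson-d}. You instead stay entirely inside the paper's own Lemma~\ref{lemm-e}: the contiguous relation $1/(1-2n)_k=(k+(1-2n))/\big((1-2n)(2-2n)_k\big)$ splits the shifted series into a ${}_3F_2$ with lower parameter $2-2n=-2(n-1)$, which Lemma~\ref{lemm-e} evaluates at $(1+a,\,b+2,\,n-1)$, plus a copy of the original sum at those same shifted parameters, yielding a first-order recursion in $n$ that you close by induction. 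I checked that the induction does close: after the Pochhammer conversions the two coefficients reduce to the polynomial identity $(2n-1)(b+2n+2)-(1+b-a)(b-2a+2n-2)=(2a-b+2n)(b-a+2n)$ and its image under $a\leftrightarrow b-a$, both of which hold, and the base case $n=0$ is trivial as you note. The trade-off: the paper's argument is shorter and produces \eqref{watson-d} explicitly (which it reuses in the proof of Lemma~\ref{lemm-g}), while yours is self-contained, needing nothing beyond Bailey's terminating Watson sum via Lemma~\ref{lemm-e}, at the price of the inductive bookkeeping.
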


\begin{proof}
 For two complex variables $x$ and
$q$, define the $q$-shifted factorial to be
 \[(x;q)_0=1\quad\text{and}\quad (x;q)_n=\prod_{k=0}^{n-1}(1-xq^k)
  \quad\text{with}\quad n\in \mathbb{N}.\]
Following Gasper and Rahman \cito{gasper}, define the basic
hypergeometric series by
\[_{r}\phi_s\ffnk{cccc}{q;z}{c_1,c_2,\cdots,c_r}{d_1,d_2,\cdots,d_s}
 =\sum_{k=0}^\infty\ffnk{ccccc}{q}{c_1,c_2,\cdots,c_r}{d_1,d_2,\cdots,d_s}_k
  \bigg\{(-1)^kq^{\binm{k}{2}}\bigg\}^{1+s-r}\qdn\frac{z^k}{(q;q)_k},\]
where $\{c_i\}_{i\geq0}$ and $\{d_j\}_{j\geq1}$ are complex
parameters such that no zero factors appear in the denominators of
the summand on the right hand side. Employing the substitutions
$a\to q^a$, $b\to q^{1+b}$ in the equation
 \bnm
{_4\phi_3}\ffnk{ccccc}{q;q}{a,b/a,q^{-n},-q^{-n-1}}
{\sqrt{qb},-\sqrt{qb},q^{-2n-1}}=
\frac{(qa;q^2)_{n+1}(qb/a;q^2)_{n+1}}{(q;q^2)_{n+1}(qb;q^2)_{n+1}}
-q^{n+1}\frac{(a;q^2)_{n+1}(b/a;q^2)_{n+1}}{(q;q^2)_{n+1}(qb;q^2)_{n+1}}
 \enm
 due to Wei et al. \citu{wei-b}{Corollary 11} and then letting $q\to1$,
we gain
 \bnm
 _3F_2\ffnk{cccc}{1}{a,1+b-a,-n}{\frac{2+b}{2},-2n-1}
=\frac{(\frac{1+a}{2})_{n+1}(\frac{2+b-a}{2})_{n+1}}{(\frac{1}{2})_{n+1}(\frac{2+b}{2})_{n+1}}
-\frac{(\frac{a}{2})_{n+1}(\frac{1+b-a}{2})_{n+1}}{(\frac{1}{2})_{n+1}(\frac{2+b}{2})_{n+1}}.
 \enm
Substitute $1+a$ for $a$ in it to achieve
 \bnm
 _3F_2\ffnk{cccc}{1}{1+a,b-a,-n}{\frac{2+b}{2},-2n-1}
=\frac{(\frac{2+a}{2})_{n+1}(\frac{1+b-a}{2})_{n+1}}{(\frac{1}{2})_{n+1}(\frac{2+b}{2})_{n+1}}
-\frac{(\frac{1+a}{2})_{n+1}(\frac{b-a}{2})_{n+1}}{(\frac{1}{2})_{n+1}(\frac{2+b}{2})_{n+1}}.
 \enm
The linear combination of the last two equations provides
 \bmn\label{watson-d}
 \xxqdn_3F_2\ffnk{cccc}{1}{a,b-a,-n}{\frac{b}{2},-2n-1}
&&\xqdn\!=\frac{2a-b+2+2n}{b}\frac{(\frac{a}{2})_{n+1}(\frac{1+b-a}{2})_{n+1}}{(\frac{1}{2})_{n+1}(\frac{2+b}{2})_{n+1}}
\nnm\\
&&\xqdn\!+\,\frac{b-2a+2+2n}{b}\frac{(\frac{1+a}{2})_{n+1}(\frac{b-a}{2})_{n+1}}{(\frac{1}{2})_{n+1}(\frac{2+b}{2})_{n+1}}.
 \emn
It is not difficult to verify that
 \bnm
 \sum_{k=0}^{n}k\frac{(a)_k(b-a)_k(-n)_k}{k!(\frac{b}{2})_k(-2n)_k}
 &&\xqdn\!=\sum_{k=1}^{n}\frac{(a)_k(b-a)_k(-n)_k}{(k-1)!(\frac{b}{2})_k(-2n)_k}\\
 &&\xqdn\!=\sum_{k=0}^{n-1}\frac{(a)_{k+1}(b-a)_{k+1}(-n)_{k+1}}{k!(\frac{b}{2})_{k+1}(-2n)_{k+1}}\\
&&\xqdn\!=\frac{a(b-a)}{b}{_3F_2}\ffnk{cccc}{1}{1+a,1+b-a,1-n}{\frac{2+b}{2},1-2n}.
 \enm
 Computing the
series on the right hand side by \eqref{watson-d}, we attain Lemma
\ref{lemm-f} to finish the proof.
\end{proof}

\begin{thm} \label{thm-e}
Let $x$ be a complex number. Then
 \bnm
&&\xxqdn\sum_{k=0}^n\binm{2n-k}{n}\binm{x+k}{k}kH_{k}^{\langle2\rangle}(x)=\frac{x+1}{8}\binm{x+2n+1}{n-1}\\
&&\xxqdn\,\,\times\:\Big\{4H_{2n+1}^{\langle2\rangle}(x)
-\big[H_n(\tfrac{x}{2})-H_n(\tfrac{x+1}{2})\big]\big[H_n(\tfrac{x}{2})-H_n(\tfrac{x+1}{2})+\tfrac{4}{n}\big]+\tfrac{4}{(x+1)^2}\Big\}.
 \enm
\end{thm}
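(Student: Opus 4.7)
The strategy parallels the proof of Theorem \ref{thm-d}, with Lemma \ref{lemm-f} playing the role of Lemma \ref{lemm-e}. First I would specialize Lemma \ref{lemm-f} at $a=x+1$ and $b=y+2$, converting the shifted factorials to binomial coefficients to obtain a closed form for
$$\sum_{k=0}^n k\,\frac{\binom{x+k}{k}\binom{y-x+k}{k}\binom{2n-k}{n}}{\binom{\frac{y}{2}+k}{k}},$$
which is the exact analogue of \eqref{watson-c}. Under these specializations, $2a-b+2n=2x-y+2n$ and $b-2a+2n=y-2x+2n$, so the two terms on the right share the prefactor $(x+1)(y-x+1)/[(y+2)(y+4)]$ and, at $y=2x$, collapse into a common multiple of $\binom{\frac{x}{2}+n}{n}\binom{\frac{x+1}{2}+n}{n}$.

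Next I would apply $\mathcal{D}_x$ to both sides. By Lemma \ref{lemm-a} this produces the factor $H_k(x)-H_k(y-x)$ on the left, and the key identity
$$H_k(x)-H_k(y-x)=(y-2x)\sum_{i=1}^k\frac{1}{(x+i)(y-x+i)}$$
allows us to divide through by $y-2x$. Letting $y\to 2x$, the ratio $\binom{y-x+k}{k}/\binom{\frac{y}{2}+k}{k}$ tends to $1$ and the inner sum collapses to $H_k^{\langle 2\rangle}(x)$, so the left-hand side converges to the target sum $\sum_{k=0}^n k\binom{x+k}{k}\binom{2n-k}{n}H_k^{\langle 2\rangle}(x)$.

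The main obstacle is the right-hand side, which remains of indeterminate $0/0$ form because, at $y=2x$, the two summands produced by $\mathcal{D}_x$ become equal (the $(2x-y+2n)$ and $(y-2x+2n)$ factors both reduce to $2n$, and the two binomial products coincide). I would resolve this by applying L'H\^opital's rule via $\mathcal{D}_y$, exactly as in the concluding limit computation of Theorem \ref{thm-d}. Differentiating the two binomial products with respect to $y$ introduces harmonic-number factors $H_n(\tfrac{x}{2})$, $H_n(\tfrac{x+1}{2})$ via Lemma \ref{lemm-a}, while differentiating the linear factors $(2x-y+2n)$, $(y-2x+2n)$ and $(y-x+1)$ produces rational corrections.

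The hard part will be the bookkeeping in this L'H\^opital step: assembling the contributions so that the squared harmonic-number piece $[H_n(\tfrac{x}{2})-H_n(\tfrac{x+1}{2})]^2$ appears together with the $4H_{2n+1}^{\langle 2\rangle}(x)$ term, and verifying that the residual rational pieces simplify to the correction $\frac{4}{n}[H_n(\tfrac{x}{2})-H_n(\tfrac{x+1}{2})]+\frac{4}{(x+1)^2}$ that distinguishes Theorem \ref{thm-e} from Theorem \ref{thm-d}. Once these cancellations are carried out, extracting the global prefactor $\frac{x+1}{8}\binom{x+2n+1}{n-1}$ from $\frac{a(b-a)}{(y+2)(y+4)}$ and the surviving binomial products is routine, completing the proof.
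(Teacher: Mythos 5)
Your proposal follows essentially the same route as the paper's own proof: specialize Lemma \ref{lemm-f} at $a=x+1$, $b=y+2$ to get \eqref{watson-e}, apply $\mathcal{D}_x$, divide by $y-2x$ via $H_k(x)-H_k(y-x)=(y-2x)\sum_{i=1}^k\frac{1}{(x+i)(y-x+i)}$, and resolve the resulting $0/0$ form as $y\to2x$ by L'H\^{o}pital's rule through $\mathcal{D}_y$. The paper performs exactly the bookkeeping you describe, splitting the differentiated right-hand side into a rational piece $\varepsilon_n(x,y)$ and a harmonic piece $\zeta_n(x,y)$ before taking the $y$-derivative, so your plan is correct and matches the published argument.
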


\begin{proof}
The case $a=x+1$ and $b=y+2$ of Lemma \ref{lemm-f} can be written as
 \bmn\label{watson-e}
\xqdn\sum_{k=0}^nk\frac{\binm{x+k}{k}\binm{y-x+k}{k}\binm{2n-k}{n}}{\binm{\frac{y}{2}+k}{k}}
&&\xqdn\!\!=\frac{4^n(x+1)(y-x+1)(2x-y+2n)}{(y+2)(y+4)}
 \frac{\binm{\frac{x}{2}+n}{n}\binm{\frac{y-x+1}{2}+n}{n}}{\binm{\frac{y+4}{2}+n}{n}}
  \nnm\\&&\xqdn\!
+\:\frac{4^n(x+1)(y-x+1)(y-2x+2n)}{(y+2)(y+4)}
 \frac{\binm{\frac{x+1}{2}+n}{n}\binm{\frac{y-x}{2}+n}{n}}{\binm{\frac{y+4}{2}+n}{n}}.
 \emn
 Applying the derivative operator $\mathcal{D}_x$ to both sides of
 \eqref{watson-e}, we obtain
 \bnm
&&\xqdn\sum_{k=0}^n\frac{\binm{x+k}{k}\binm{y-x+k}{k}\binm{2n-k}{n}}{\binm{\frac{y}{2}+k}{k}}
k\big\{H_k(x)-H_k(y-x)\big\}\\
&&\xqdn\:=\:
\frac{4^n}{(y+2)(y+4)}\frac{\varepsilon_n(x,y)}{\binm{\frac{y+4}{2}+n}{n}}+
\frac{4^{n}(x+1)(y-x+1)}{(y+2)(y+4)}\frac{\zeta_n(x,y)}{\binm{\frac{y+4}{2}+n}{n}},
 \enm
where the corresponding expressions are
 \bnm
\varepsilon_n(x,y)&&\xqdn\!=\{2-6x^2+6xy-(y-2)y-2n(2x-y)\}\binm{\frac{x}{2}+n}{n}\binm{\frac{y-x+1}{2}+n}{n}\\
&&\xqdn\!-\,\{2-6x^2+6xy-(y-2)y+2n(2x-y)\}\binm{\frac{x+1}{2}+n}{n}\binm{\frac{y-x}{2}+n}{n},\\
\zeta_n(x,y)&&\xqdn\!=\frac{(2x-y+2n)\binm{\frac{x}{2}+n}{n}\binm{\frac{y-x+1}{2}+n}{n}\big[H_n(\tfrac{x}{2})-H_n(\tfrac{y-x+1}{2})\big]}{2}\\
&&\xqdn\!+\,\frac{(y-2x+2n)\binm{\frac{x+1}{2}+n}{n}\binm{\frac{y-x}{2}+n}{n}\big[H_n(\tfrac{x+1}{2})-H_n(\tfrac{y-x}{2})\big]}{2}.
 \enm
The last equation can be reformulated as
 \bnm
&&\xqdn\sum_{k=0}^n\frac{\binm{x+k}{k}\binm{y-x+k}{k}\binm{2n-k}{n}}{\binm{\frac{y}{2}+k}{k}}
k\sum_{i=1}^k\frac{1}{(x+i)(y-x+i)}
 \nnm\\&&\xqdn\:=\:
\frac{4^n}{(y+2)(y+4)}\frac{1}{\binm{\frac{y+4}{2}+n}{n}}\frac{\varepsilon_n(x,y)}{y-2x}+
\frac{4^{n}(x+1)(y-x+1)}{(y+2)(y+4)}\frac{1}{\binm{\frac{y+4}{2}+n}{n}}\frac{\zeta_n(x,y)}{y-2x}.
 \enm
Finding the limit $y\to2x$ of it and exploiting the two relations
 \bnm
&&\text{Lim}_{y\to2x}\frac{\varepsilon_n(x,y)}{y-2x}\\
&&\:\:=\:\text{Lim}_{y\to2x}\mathcal{D}_y\varepsilon_n(x,y)\\
&&\:\:=\:(x+1)^2\binm{\frac{x}{2}+n}{n}\binm{\frac{x+1}{2}+n}{n}
\big[H_n(\tfrac{x+1}{2})-H_n(\tfrac{x}{2})+\tfrac{4n}{(x+1)^2}\big],\\
&&\text{Lim}_{y\to2x}\frac{\zeta_n(x,y)}{y-2x}\\
&&\:\:=\:\text{Lim}_{y\to2x}\mathcal{D}_y\zeta_n(x,y)\\
&&\:\:=\:\frac{n}{2}\binm{\frac{x}{2}+n}{n}\binm{\frac{x+1}{2}+n}{n}\\
&&\:\:\times\:\,\Big\{4H_{2n}^{\langle2\rangle}(x+1)
-\big[H_n(\tfrac{x}{2})-H_n(\tfrac{x+1}{2})\big]\big[H_n(\tfrac{x}{2})-H_n(\tfrac{x+1}{2})+\tfrac{2}{n}\big]\Big\}
 \enm
from L'H\^{o}spital rule, we get Theorem \ref{thm-e} to complete the
proof.
\end{proof}

Selecting $x=p$ with $p\in \mathbb{N}_0$ in Theorem \ref{thm-e} and
using \eqref{watson-e}, we have the summation formula on harmonic
numbers of 2-order.

\begin{corl} \label{corl-e}
Let $p$ be a nonnegative integer. Then
 \bnm
\quad\sum_{k=0}^n\binm{2n-k}{n}\binm{p+k}{k}kH_{p+k}^{\langle2\rangle}
=\frac{p+1}{2}\binm{p+2n+1}{n-1}\Big\{H_{p+2n+1}^{\langle2\rangle}+H_{p+1}^{\langle2\rangle}-B_n(p)\Big\},
 \enm
where the symbols on the right hand side stand for
 \bnm
 B_n(p)=
 \begin{cases}
 (H_{2q+2n+1}-H_{q+n}-H_{2q+1}+H_q)\\\times(H_{2q+2n+1}-H_{q+n}-H_{2q+1}+H_q-\frac{2}{n}),&\qdn p=2q;
\\[2mm]
(H_{2q+2n+2}-H_{q+n+1}-H_{2q+2}+H_{q+1})\\\times(H_{2q+2n+2}-H_{q+n+1}-H_{2q+2}+H_{q+1}+\frac{2}{n}),&\qdn
p=2q+1.
\end{cases}
 \enm
\end{corl}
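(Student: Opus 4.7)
The plan is to mirror the proof of Theorem \ref{thm-d} exactly, replacing Lemma \ref{lemm-e} with its weighted analogue Lemma \ref{lemm-f} so as to handle the extra factor of $k$ in the summand. First I would specialize Lemma \ref{lemm-f} at $a=x+1$ and $b=y+2$; the result is precisely the identity \eqref{watson-e} already displayed in the excerpt, giving a closed form for $\sum_{k=0}^n k\binom{x+k}{k}\binom{y-x+k}{k}\binom{2n-k}{n}/\binom{y/2+k}{k}$ as a sum of two binomial products scaled by the rational prefactor $4^n(x+1)(y-x+1)/((y+2)(y+4)\binom{(y+4)/2+n}{n})$.

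Next I would apply $\mathcal{D}_x$ to both sides of \eqref{watson-e}. By Lemma \ref{lemm-a}, on the left-hand side only $\binom{x+k}{k}$ and $\binom{y-x+k}{k}$ depend on $x$; their logarithmic $\mathcal{D}_x$-derivatives are $H_k(x)$ and $-H_k(y-x)$ respectively, so the left side picks up the factor $H_k(x)-H_k(y-x)$. On the right side I would split the $x$-dependence into the polynomial factors $(x+1)$, $(y-x+1)$, $(2x-y+2n)$, $(y-2x+2n)$ and the binomial factors in $x/2$, $(x+1)/2$, $(y-x)/2$, $(y-x+1)/2$: differentiating the polynomial part produces a polynomial expression $\varepsilon_n(x,y)$, while differentiating the binomial part produces a harmonic-number expression $\zeta_n(x,y)$. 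I would then apply the partial-fraction identity $H_k(x)-H_k(y-x)=(y-2x)\sum_{i=1}^k 1/((x+i)(y-x+i))$, divide through by $y-2x$, and take the limit $y\to 2x$. Since $\binom{y-x+k}{k}/\binom{y/2+k}{k}\to 1$ at $y=2x$, the left side limits to the target sum $\sum_{k=0}^n k\binom{x+k}{k}\binom{2n-k}{n}H_{k}^{\langle 2\rangle}(x)$.

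The main obstacle lies in the L'H\^opital evaluation on the right. Here $\varepsilon_n(x,y)$ vanishes identically at $y=2x$ (the two summands of \eqref{watson-e} collapse into each other there), forcing the use of L'H\^opital: $\lim_{y\to 2x}\varepsilon_n(x,y)/(y-2x)=\mathcal{D}_y\varepsilon_n(x,y)|_{y=2x}$, which is a polynomial calculation that I expect to yield a contribution of the $4n/(x+1)^2$-type. The parallel limit $\lim_{y\to 2x}\zeta_n(x,y)/(y-2x)=\mathcal{D}_y\zeta_n(x,y)|_{y=2x}$ should unfold as in Theorem \ref{thm-d}, generating a combination of $4H_{2n}^{\langle 2\rangle}(x+1)$, a squared harmonic difference $[H_n(x/2)-H_n((x+1)/2)]^2$, and a linear cross-term $(2/n)[H_n(x/2)-H_n((x+1)/2)]$. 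Merging the two pieces, invoking $H_{2n}^{\langle 2\rangle}(x+1)=H_{2n+1}^{\langle 2\rangle}(x)-1/(x+1)^2$ to upgrade the index from $2n$ to $2n+1$, and re-expressing the binomial products via the Legendre duplication $(x+2)_{2n}=4^n(x/2+1)_n(x/2+3/2)_n$ (as implicit in Theorem \ref{thm-d}) should reassemble both the three-term bracket in the statement and the prefactor $(x+1)\binom{x+2n+1}{n-1}/8$; this final reassembly is routine bookkeeping.
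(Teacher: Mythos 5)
Your proposal reconstructs, step for step, the paper's proof of Theorem \ref{thm-e}: specialize Lemma \ref{lemm-f} at $a=x+1$, $b=y+2$ to get \eqref{watson-e}, apply $\mathcal{D}_x$, divide by $y-2x$, and evaluate the limit $y\to 2x$ by L'H\^{o}pital (your observations that $\varepsilon_n(x,y)$ vanishes identically at $y=2x$ and that $H_{2n}^{\langle2\rangle}(x+1)=H_{2n+1}^{\langle2\rangle}(x)-1/(x+1)^2$ both check out against the paper's computation). But the statement in question is Corollary \ref{corl-e}, and what you call ``the target sum'', namely $\sum_{k}k\binom{2n-k}{n}\binom{x+k}{k}H_{k}^{\langle2\rangle}(x)$, is not the corollary's left-hand side $\sum_{k}k\binom{2n-k}{n}\binom{p+k}{k}H_{p+k}^{\langle2\rangle}$. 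The passage from the theorem to the corollary is entirely absent from your write-up, and it is not routine bookkeeping: your closing sentence explicitly reassembles the bracket and the prefactor $(x+1)\binom{x+2n+1}{n-1}/8$ of Theorem \ref{thm-e}, not the prefactor $\frac{p+1}{2}\binom{p+2n+1}{n-1}$ and the parity-split quantity $B_n(p)$ of the corollary.

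Two concrete steps are missing. First, since $H_k^{\langle2\rangle}(p)=H_{p+k}^{\langle2\rangle}-H_p^{\langle2\rangle}$, you must add $H_p^{\langle2\rangle}\sum_{k}k\binom{2n-k}{n}\binom{p+k}{k}$ to both sides; this auxiliary sum is evaluated by the $y=2x$ case of \eqref{watson-e}, i.e. \eqref{watson-i}, as $(p+1)\binom{p+2n+1}{n-1}$, and the resulting term combines with $4H_{2n+1}^{\langle2\rangle}(p)=4H_{p+2n+1}^{\langle2\rangle}-4H_p^{\langle2\rangle}$ and with the $4/(p+1)^2$ term to yield $H_{p+2n+1}^{\langle2\rangle}+H_{p+1}^{\langle2\rangle}$. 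Second, the half-argument difference $H_n(\tfrac{p}{2})-H_n(\tfrac{p+1}{2})$ must be converted to classical harmonic numbers, which forces the case distinction defining $B_n(p)$: for $p=2q$ one uses $H_n(q)=H_{q+n}-H_q$ and $H_n(q+\tfrac12)=2(H_{2q+2n+1}-H_{2q+1})-(H_{q+n}-H_q)$, and analogously for $p=2q+1$; this is where the sign of the $\pm\frac{2}{n}$ cross-term in the two branches of $B_n(p)$ comes from. As written, your argument establishes Theorem \ref{thm-e} (by the same route as the paper) but does not reach Corollary \ref{corl-e}.
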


When $p=0$, Corollary \ref{corl-e} produces the concise harmonic
 number identity:
\bnm
 \quad\sum_{k=0}^n\binm{2n-k}{n}kH_{k}^{\langle2\rangle}
=\frac{\binm{2n+1}{n-1}}{2}\big\{H_{2n+1}^{\langle2\rangle}-\tfrac{2}{n}-(H_{2n+1}-H_n)(H_{2n+1}-H_n-2-\tfrac{2}{n})\big\}.
 \enm

\begin{lemm}\label{lemm-g}
Let $a$ and $b$ be both complex numbers. Then
 \bnm
 \quad\sum_{k=0}^{n}k^2\frac{(a)_k(b-a)_k(-n)_k}{k!(\frac{b}{2})_k(-2n)_k}
=\Theta_n(a,b)\frac{(\frac{1+a}{2})_n(\frac{b-a}{2})_{n+1}}{(\frac{1}{2})_n(\frac{4+b}{2})_{n+1}}
+\Omega_n(a,b)\frac{(\frac{a}{2})_{n+1}(\frac{1+b-a}{2})_n}{(\frac{1}{2})_n(\frac{4+b}{2})_{n+1}}.
 \enm
 where the corresponding expressions are
\bnm
 &&\xxqdn\Theta_n(a,b)=\frac{\sst2a^3(2a-3b+2n)+2a^2(b^2-b-3bn-2n^2+2n+2)+a(2n+1)(b^2-2b+2bn+4n)}{b(b+2)},\\
  &&\xxqdn\Omega_n(a,b)=\frac{\sst(a-b)^3(2a+1)+2(a-b)^2(a^2-an+n+1)-(a-b)(2n+1)(a^2-2a+2an+4n)}{b(b+2)}.
 \enm
\end{lemm}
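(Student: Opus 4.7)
The strategy mirrors Lemma~\ref{lemm-f}. Write $k^2=k(k-1)+k$ so that
\[
\sum_{k=0}^{n}k^2\frac{(a)_k(b-a)_k(-n)_k}{k!(\tfrac{b}{2})_k(-2n)_k}=T_2+T_1,
\]
where $T_1=\sum_{k=0}^{n}k\,F_k$ is given by Lemma~\ref{lemm-f} (with $F_k$ denoting the summand), so only $T_2=\sum_{k=0}^{n}k(k-1)F_k$ is new. Shifting the summation index down by two via $(a)_{k+2}=a(1+a)(2+a)_k$, $(b-a)_{k+2}=(b-a)(1+b-a)(2+b-a)_k$, $(-n)_{k+2}=(-n)(1-n)(2-n)_k$ (and similarly for the three lower Pochhammers) converts $T_2$ into an explicit rational prefactor times the terminating series
\[
{_3F_2}\ffnk{cccc}{1}{2+a,\,2+b-a,\,2-n}{\frac{4+b}{2},\,2-2n},
\]
which is a Watson-type $_3F_2$ with its lower $-2n'$-parameter shifted by $-2$.

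Thus the new ingredient I need is a terminating Watson-type evaluation of
\[
{_3F_2}\ffnk{cccc}{1}{a,\,b-a,\,-n}{\frac{b}{2},\,-2n-2},
\]
one step beyond~\eqref{watson-d}. I would establish it by one of two routes: (i)~specialise a companion $q$-hypergeometric identity from Wei et al.~\cito{wei-b} and let $q\to1$, exactly as in the proof of~\eqref{watson-d}; or (ii)~substitute $a\mapsto 1+a$ in~\eqref{watson-d} and take a judicious two-term linear combination that cancels the parasitic $-n-1$ upper parameter --- the same trick that upgrades~\eqref{whipple-b} to~\eqref{whipple-c} in the proof of Lemma~\ref{lemm-d}. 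Applying the resulting identity under the substitutions $a\to 2+a$, $b\to 4+b$, $n\to n-2$ then evaluates the inner $_3F_2$ appearing in $T_2$ as a linear combination of two Pochhammer products analogous to (but one step further than) those on the right-hand side of~\eqref{watson-d}.

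Adding the closed form obtained for $T_2$ to Lemma~\ref{lemm-f}'s formula for $T_1$ yields a sum of the two basic ratios $\frac{(\frac{1+a}{2})_n(\frac{2+b-a}{2})_n}{(\frac{1}{2})_n(\frac{4+b}{2})_n}$ and $\frac{(\frac{2+a}{2})_n(\frac{1+b-a}{2})_n}{(\frac{1}{2})_n(\frac{4+b}{2})_n}$. Using the elementary factorisations $(\frac{4+b}{2})_{n+1}=\frac{4+b+2n}{2}(\frac{4+b}{2})_n$, $(\frac{b-a}{2})_{n+1}=\frac{b-a}{2}(\frac{2+b-a}{2})_n$, $(\frac{a}{2})_{n+1}=\frac{a}{2}(\frac{2+a}{2})_n$, one rewrites both contributions over the common denominator $(\frac{1}{2})_n(\frac{4+b}{2})_{n+1}$ and reads off the cubic polynomials $\Theta_n(a,b)$ and $\Omega_n(a,b)$ by collecting coefficients. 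The principal obstacle is the auxiliary $-2n-2$ identity: route~(ii) requires a careful two-term cancellation to replace $-n-1$ by $-n$, while route~(i) requires pinpointing the right $q$-series specialisation. The ensuing polynomial bookkeeping that matches the combined prefactors against $\Theta_n$ and $\Omega_n$ is elementary but lengthy.
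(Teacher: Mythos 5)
Your proposal follows essentially the same route as the paper: the paper also splits the sum via index shifts into the singly-shifted series $\frac{a(b-a)}{b}\,{}_3F_2\bigl[1+a,1+b-a,1-n;\frac{2+b}{2},1-2n\bigr]$ (equivalently your $T_1$, i.e.\ Lemma~\ref{lemm-f} via \eqref{watson-d}) plus the doubly-shifted ${}_3F_2\bigl[2+a,2+b-a,2-n;\frac{4+b}{2},2-2n\bigr]$ (your $T_2$), and evaluates the latter by a new terminating Watson-type identity \eqref{watson-f} with lower parameter $-2n-2$, obtained exactly by your route~(i): the $q\to1$ limit of \citu{wei-b}{Corollary 12} followed by the $a\to1+a$ linear combination. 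One caution: your route~(ii) would not work as stated, because the $a\mapsto1+a$ combination trick only adjusts the $\frac{b}{2}$-type lower parameter (or contiguous upper parameters), whereas passing from $-2n-1$ to $-2n-2$ in the lower row is a genuinely new piece of input that the paper imports from the $q$-series source rather than deriving from \eqref{watson-d}.
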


\begin{proof}
 Performing the replacements $a\to q^a$, $b\to q^{1+b}$ in the equation
 \bnm
&&\xqdn{_4\phi_3}\ffnk{ccccc}{q;q}{a,b/a,q^{-n},-q^{-n-2}}
{\sqrt{qb},-\sqrt{qb},q^{-2n-2}}
=\frac{(1+q)(1-q^{2n+3})}{q^{n+1}-q^{-n-1}}\frac{(a;q^2)_{n+2}(b/a;q^2)_{n+2}}{(q;q^2)_{n+2}(qb;q^2)_{n+2}}\\
&&\xqdn\:\:+\:\bigg\{1-\frac{(1-a)(1-b/a)(1-q^{2n+4})}{(1-q^{-2n-2})(1-aq^{2n+3})(1-bq^{2n+3}/a)}\bigg\}
\frac{(qa;q^2)_{n+2}(qb/a;q^2)_{n+2}}{(q;q^2)_{n+2}(qb;q^2)_{n+2}}.
 \enm
due to Wei et al. \citu{wei-b}{Corollary 12} and then letting
$q\to1$, we gain
  \bnm
 &&\qqdn\xqdn_3F_2\ffnk{cccc}{1}{a,1+b-a,-n}{\frac{2+b}{2},-2n-2}
=-\frac{2n+3}{n+1}\frac{(\frac{a}{2})_{n+2}(\frac{1+b-a}{2})_{n+2}}{(\frac{1}{2})_{n+2}(\frac{2+b}{2})_{n+2}}\\
&&\qqdn\xqdn\:+\:\:\frac{a(1+b-a)(n+2)+(n+1)(a+2n+3)(b-a+2n+4)}{(n+1)(a+2n+3)(b-a+2n+4)}
\frac{(\frac{1+a}{2})_{n+2}(\frac{2+b-a}{2})_{n+2}}{(\frac{1}{2})_{n+2}(\frac{2+b}{2})_{n+2}}.
 \enm
Replace $a$ by $1+a$ in it to achieve
 \bnm
 &&\xqdn_3F_2\ffnk{cccc}{1}{1+a,b-a,-n}{\frac{2+b}{2},-2n-2}
=-\frac{2n+3}{n+1}\frac{(\frac{1+a}{2})_{n+2}(\frac{b-a}{2})_{n+2}}{(\frac{1}{2})_{n+2}(\frac{2+b}{2})_{n+2}}\\
&&\xqdn\:+\:\:\frac{(1+a)(b-a)(n+2)+(n+1)(a+2n+4)(b-a+2n+3)}{(n+1)(a+2n+4)(b-a+2n+3)}
\frac{(\frac{2+a}{2})_{n+2}(\frac{1+b-a}{2})_{n+2}}{(\frac{1}{2})_{n+2}(\frac{2+b}{2})_{n+2}}.
 \enm
The linear combination of the last two equations offers
 \bmn\label{watson-f}
\qqdn\xqdn _3F_2\ffnk{cccc}{1}{a,b-a,-n}{\frac{b}{2},-2n-2}
&&\xqdn\!=\frac{\sst a(2a-b+n+1)-(n+1)(b-a+2n+4)}{(n+1)(a-1)b}
\frac{(\frac{a-1}{2})_{n+2}(\frac{b-a}{2})_{n+2}}{(-\frac{1}{2})_{n+2}(\frac{2+b}{2})_{n+2}}
\nnm\\
&&\xqdn\!+\,\frac{\sst (b-a)(b-2a+n+1)-(n+1)(a+2n+4)}{(n+1)(b-a-1)b}
\frac{(\frac{a}{2})_{n+2}(\frac{b-a-1}{2})_{n+2}}{(-\frac{1}{2})_{n+2}(\frac{2+b}{2})_{n+2}}.
 \emn
It is routine to show that
 \bnm
 &&\xqdn\sum_{k=0}^{n}k^2\frac{(a)_k(b-a)_k(-n)_k}{k!(\frac{b}{2})_k(-2n)_k}
 =\sum_{k=1}^{n}k\frac{(a)_k(b-a)_k(-n)_k}{(k-1)!(\frac{b}{2})_k(-2n)_k}\\
 &&\qdn\qqdn=\sum_{k=0}^{n-1}(k+1)\frac{(a)_{k+1}(b-a)_{k+1}(-n)_{k+1}}{k!(\frac{b}{2})_{k+1}(-2n)_{k+1}}\\
&&\qdn\qqdn=\frac{a(b-a)}{b}{_3F_2}\ffnk{cccc}{1}{1+a,1+b-a,1-n}{\frac{2+b}{2},1-2n}\\
 &&\qdn\qqdn+\:\frac{a(b-a)}{b}
  \sum_{k=1}^{n-1}\frac{(1+a)_{k}(1+b-a)_{k}(1-n)_{k}}{(k-1)!(\frac{2+b}{2})_{k}(1-2n)_{k}}\\
&&\qdn\qqdn=\frac{a(b-a)}{b}{_3F_2}\ffnk{cccc}{1}{1+a,1+b-a,1-n}{\frac{2+b}{2},1-2n}\\
&&\qdn\qqdn+\:\frac{2a(1+a)(b-a)(1+b-a)(1-n)}{b(2+b)(1-2n)}{_3F_2}\ffnk{cccc}{1}{2+a,2+b-a,2-n}{\frac{4+b}{2},2-2n}.
 \enm
 Evaluating the two
series on the right hand side by \eqref{watson-d} and
\eqref{watson-f}, we attain Lemma \ref{lemm-g} to finish the proof.
\end{proof}

\begin{thm} \label{thm-f}
Let $x$ be a complex number. Then
 \bnm
&&\xxqdn\sum_{k=0}^n\binm{2n-k}{n}\binm{x+k}{k}k^2H_{k}^{\langle2\rangle}(x)=\frac{(x+1)(xn+3n+1)}{8(n-1)}\binm{x+2n+1}{n-2}\\
&&\xxqdn\,\,\times\:\Big\{4H_{2n+1}^{\langle2\rangle}(x)+\tfrac{8(x^2+3x+n^2-n+1)}{n(x+1)(xn+3n+1)}
-\big[H_n(\tfrac{x}{2})-H_{n+1}(\tfrac{x-1}{2})\big]\\
&&\xqdn\,\,\times\:
\big[H_n(\tfrac{x}{2})-H_{n+1}(\tfrac{x-1}{2})-\tfrac{4(x+n+1)(x^2+3x-2n+1)}{n(x+1)(xn+3n+1)}\big]\Big\}.
 \enm
\end{thm}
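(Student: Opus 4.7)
The plan is to mirror the proofs of Theorems \ref{thm-d} and \ref{thm-e} verbatim, using Lemma \ref{lemm-g} (which already handles the $k^{2}$ weight) as the starting point rather than Lemma \ref{lemm-e} or Lemma \ref{lemm-f}. First, I would specialize Lemma \ref{lemm-g} by setting $a=x+1$ and $b=y+2$ and rewriting the shifted factorials as binomial coefficients. This converts the hypergeometric identity into an equation of the shape
\[
\sum_{k=0}^n k^{2}\,\frac{\binom{x+k}{k}\binom{y-x+k}{k}\binom{2n-k}{n}}{\binom{\frac{y}{2}+k}{k}}
= \Theta_n^{*}(x,y)\,\frac{\binom{\frac{x}{2}+n}{n}\binom{\frac{y-x+1}{2}+n}{n+1}}{\binom{\frac{y+4}{2}+n}{n+1}}+\Omega_n^{*}(x,y)\,\frac{\binom{\frac{x+1}{2}+n}{n+1}\binom{\frac{y-x}{2}+n}{n}}{\binom{\frac{y+4}{2}+n}{n+1}},
\]
where $\Theta_n^{*}$ and $\Omega_n^{*}$ are the rational functions obtained from $\Theta_n$ and $\Omega_n$ of Lemma \ref{lemm-g} after the substitution.

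Next I would apply the derivative operator $\mathcal{D}_x$ to both sides. By Lemma \ref{lemm-a}, differentiating $\binom{x+k}{k}\binom{y-x+k}{k}$ produces the factor $H_k(x)-H_k(y-x)$ on the left, while on the right, derivatives of the paired binomials in $\frac{x}{2}$ and $\frac{y-x}{2}$ generate terms involving $H_n(\tfrac{x}{2})$, $H_n(\tfrac{x-1}{2})$, $H_n(\tfrac{y-x}{2})$, $H_n(\tfrac{y-x+1}{2})$, etc., together with explicit rational derivatives of $\Theta_n^{*}$ and $\Omega_n^{*}$. I would then use the elementary identity
\[
H_k(x)-H_k(y-x)=(y-2x)\sum_{i=1}^k\frac{1}{(x+i)(y-x+i)},
\]
divide both sides by $y-2x$, and take the limit $y\to 2x$. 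On the left this yields $\sum_{k=0}^n k^{2}\binom{2n-k}{n}\binom{x+k}{k}H_k^{\langle 2\rangle}(x)$, exactly the sum appearing in the theorem.

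On the right, the limit has a $0/0$ form, so I would apply L'H\^opital's rule, differentiating the numerator once more in $y$. This produces two contributions: (i) from the derivative of $\Theta_n^{*}(x,y)$ and $\Omega_n^{*}(x,y)$ in $y$, giving rational functions of $x$ after specializing $y=2x$; (ii) from the derivative of the binomial products in $y$, producing harmonic differences like $H_n(\tfrac{x}{2})-H_{n+1}(\tfrac{x-1}{2})$ and a second-order harmonic $4H_{2n+1}^{\langle 2\rangle}(x)$ (via the identity $\mathcal{D}_x H_n^{\langle 1\rangle}(x)=-H_n^{\langle 2\rangle}(x)$ coupled with the combination of shifts that recovers $H_{2n+1}^{\langle 2\rangle}$, exactly as in Theorem \ref{thm-e}). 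After collecting all contributions at $y=2x$ and factoring out the common pre-factor $\tfrac{(x+1)(xn+3n+1)}{8(n-1)}\binom{x+2n+1}{n-2}$, one obtains the claimed closed form.

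The main obstacle will be purely computational: the rational coefficients $\Theta_n(a,b)$ and $\Omega_n(a,b)$ are cubic in $a$, and their $y$-derivatives at $y=2x$ must be combined with the harmonic contributions from the binomial products and with the ``constant'' term $\tfrac{8(x^2+3x+n^2-n+1)}{n(x+1)(xn+3n+1)}$ that appears inside the braces of the theorem. Matching this rational constant and checking that the cross-terms between $\Theta_n^{*}$, $\Omega_n^{*}$, and the harmonic derivatives produce the symmetric factor $\bigl[H_n(\tfrac{x}{2})-H_{n+1}(\tfrac{x-1}{2})\bigr]\bigl[H_n(\tfrac{x}{2})-H_{n+1}(\tfrac{x-1}{2})-\tfrac{4(x+n+1)(x^2+3x-2n+1)}{n(x+1)(xn+3n+1)}\bigr]$ is the bookkeeping-heavy step; everything else is a direct repetition of the method already validated in Theorems \ref{thm-d} and \ref{thm-e}.
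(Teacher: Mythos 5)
Your proposal follows essentially the same route as the paper's own proof: specialize Lemma \ref{lemm-g} at $a=x+1$, $b=y+2$, apply $\mathcal{D}_x$, rewrite $H_k(x)-H_k(y-x)$ as $(y-2x)\sum_{i=1}^k\frac{1}{(x+i)(y-x+i)}$, divide by $y-2x$, and evaluate the limit $y\to 2x$ by L'H\^opital's rule. The only slip is cosmetic: with $b=y+2$ the denominator factor $(\frac{4+b}{2})_{n+1}$ gives $\binm{\frac{y+6}{2}+n}{n+1}$, not $\binm{\frac{y+4}{2}+n}{n+1}$, but this does not affect the method.
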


\begin{proof}
The case $a=x+1$ and $b=y+2$ of Lemma \ref{lemm-g} can be
manipulated as
 \bmn\label{watson-g}
\sum_{k=0}^nk^2\frac{\binm{x+k}{k}\binm{y-x+k}{k}\binm{2n-k}{n}}{\binm{\frac{y}{2}+k}{k}}=\delta_n(x,y)
\frac{\binm{\frac{x+1}{2}+n}{n+1}\binm{\frac{y-x}{2}+n}{n}}{\binm{\frac{y+6}{2}+n}{n+1}}+
 \xi_n(x,y)\frac{\binm{\frac{x}{2}+n}{n}\binm{\frac{y-x+1}{2}+n}{n+1}}{\binm{\frac{y+6}{2}+n}{n+1}},
 \emn
where the symbols on the right hand side stand for
 \bnm
\delta_n(x,y)&&\xqdn\!=\frac{4^n(x-y-1)}{y+2}\frac{\sst2x^2(2x-3y-2n)+2x(y^2-3y+yn-2-2n^2)+3y^2+2y-4n-12n^2}{y+4},\\
\xi_n(x,y)&&\xqdn\!=\frac{4^n(x+1)}{y+2}\frac{\sst2x^2(2x-3y+2n)+2x(y^2-3y-3yn-2-2n^2)+(2n+3)y^2+2(2n^2+1)y+4n+12n^2}{y+4}.
 \enm
 Applying the derivative operator $\mathcal{D}_x$ to both sides of
 it, we obtain
 \bnm
&&\xqdn\sum_{k=0}^n\frac{\binm{x+k}{k}\binm{y-x+k}{k}\binm{2n-k}{n}}{\binm{\frac{y}{2}+k}{k}}
k^2\big\{H_k(x)-H_k(y-x)\big\}=\frac{\varphi_n(x,y)+\psi_n(x,y)}{\binm{\frac{y+6}{2}+n}{n+1}},
 \enm
where the the corresponding expressions are
 \bnm
\varphi_n(x,y)&&\xqdn\!= \mathcal{D}_x[\delta_n(x,y)]
\binm{\frac{x+1}{2}+n}{n+1}\binm{\frac{y-x}{2}+n}{n}+
 {D}_x[\xi_n(x,y)]\binm{\frac{x}{2}+n}{n}\binm{\frac{y-x+1}{2}+n}{n+1},\\
\psi_n(x,y)&&\xqdn\!=\frac{\delta_n(x,y)\binm{\frac{x+1}{2}+n}{n+1}
\binm{\frac{y-x}{2}+n}{n}\big[H_{n+1}(\tfrac{x-1}{2})-H_n(\tfrac{y-x}{2})\big]}{2}\\
&&\xqdn\!+\,\frac{\xi_n(x,y)\binm{\frac{x}{2}+n}{n}
\binm{\frac{y-x+1}{2}+n}{n+1}\big[H_n(\tfrac{x}{2})-H_{n+1}(\tfrac{y-x-1}{2})\big]}{2}.
 \enm
The last equation can be reformulated as
 \bnm
&&\xqdn\sum_{k=0}^n\frac{\binm{x+k}{k}\binm{y-x+k}{k}\binm{2n-k}{n}}{\binm{\frac{y}{2}+k}{k}}
k^2\sum_{i=1}^k\frac{1}{(x+i)(y-x+i)}
 \\&&\xqdn\:=\:
\frac{1}{\binm{\frac{y+6}{2}+n}{n+1}}\frac{\varphi_n(x,y)}{y-2x}+
\frac{1}{\binm{\frac{y+6}{2}+n}{n+1}}\frac{\psi_n(x,y)}{y-2x}.
 \enm
Fining the limit $y\to2x$ of it and utilizing the two relations
 \bnm
&&\xqdn\text{Lim}_{y\to2x}\frac{\varphi_n(x,y)}{y-2x}\\
&&\xqdn\:\:=\:\text{Lim}_{y\to2x}\mathcal{D}_y\varphi_n(x,y)\\
&&\xqdn\:\:=\:\frac{4^n(x+n+1)(x^2+3x-2n+1)}{2(x+1)(x+2)}\binm{\frac{x}{2}+n}{n}\binm{\frac{x+1}{2}+n}{n+1}\\
&&\xqdn\:\:\times\:\:\big[H_n(\tfrac{x}{2})-H_{n+1}(\tfrac{x-1}{2})
+\tfrac{4(x^2+3x+n^2-n+1)}{(x+n+1)(x^2+3x-2n+1)}\big],\\
&&\xqdn\text{Lim}_{y\to2x}\frac{\psi_n(x,y)}{y-2x}\\
&&\xqdn\:\:=\:\text{Lim}_{y\to2x}\mathcal{D}_y\psi_n(x,y)\\
&&\xqdn\:\:=\:\frac{4^{n-1}n(xn+3n+1)}{x+2}\binm{\frac{x}{2}+n}{n}\binm{\frac{x+1}{2}+n}{n+1}\Big\{4H_{2n+1}^{\langle2\rangle}(x)\\
&&\xqdn\:\:-\:\,
\big[H_n(\tfrac{x}{2})-H_{n+1}(\tfrac{x-1}{2})\big]\big[H_n(\tfrac{x}{2})-H_{n+1}(\tfrac{x-1}{2})-
\tfrac{2(x+n+1)(x^2+3x-2n+1)}{n(x+1)(xn+3n+1)}\big]\Big\}
 \enm
from L'H\^{o}spital rule, we get Theorem \ref{thm-f} to complete the
proof.
\end{proof}

Fixing $x=p$ with $p\in \mathbb{N}_0$ in Theorem \ref{thm-f} and
exploiting \eqref{watson-g}, we deduce the summation formula on
harmonic numbers of 2-order.

\begin{corl} \label{corl-f}
Let $p$ be a nonnegative integer. Then
 \bnm
&&\sum_{k=0}^n\binm{2n-k}{n}\binm{p+k}{k}k^2H_{p+k}^{\langle2\rangle}
=\frac{(p+1)(pn+3n+1)}{2(n-1)}\binm{p+2n+1}{n-2}\\
&&\,\,\times\:\Big\{H_{p+2n+1}^{\langle2\rangle}+H_{p}^{\langle2\rangle}-C_n(p)
+\tfrac{2(p^2+3p+n^2-n+1)}{n(p+1)(pn+3n+1)}\Big\},
 \enm
where the symbol on the right hand side stands for
 \bnm
 C_n(p)=
 \begin{cases}
 \big[H_{2q+2n+1}-H_{q+n}-H_{2q}+H_q\big]\\
 \times\big[H_{2q+2n+1}-H_{q+n}-H_{2q}+H_q+\frac{2(2q+n+1)(4q^2+6q-2n+1)}{n(2q+1)(2qn+3n+1)}\big],&\qqdn p=2q;
\\[2mm]
\big[H_{2q+2n+2}-H_{q+n+1}-H_{2q+1}+H_{q}\big]\\
\times\big[H_{2q+2n+2}-H_{q+n+1}-H_{2q+1}+H_{q}-\frac{(2q+n+2)(4q^2+10q-2n+5)}{n(q+1)(2qn+4n+1)}\big],&\qqdn
p=2q+1.
\end{cases}
 \enm
\end{corl}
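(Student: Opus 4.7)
The plan is to specialize Theorem \ref{thm-f} to $x = p$ and translate every generalized harmonic number into a classical one via $H_m^{\langle 2\rangle}(p) = H_{p+m}^{\langle 2\rangle} - H_p^{\langle 2\rangle}$. Inside the summand this rewrites $k^2 H_k^{\langle 2\rangle}(p)$ as $k^2 H_{p+k}^{\langle 2\rangle} - H_p^{\langle 2\rangle}\,k^2$, so the desired sum equals the right-hand side of Theorem \ref{thm-f} at $x = p$ (with the further replacement $H_{2n+1}^{\langle 2\rangle}(p) = H_{p+2n+1}^{\langle 2\rangle} - H_p^{\langle 2\rangle}$) augmented by $H_p^{\langle 2\rangle}$ times the auxiliary sum $S_n(p) := \sum_{k=0}^n k^2 \binm{2n-k}{n}\binm{p+k}{k}$. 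To compute $S_n(p)$ I would specialize \eqref{watson-g} at $y = 2x$. A short check shows $\delta_n(x,2x) = \xi_n(x,2x) = \frac{4^n n(xn+3n+1)}{x+2}$, and telescoping the product of the two half-shifted binomials by interleaving $(x+2i)$ and $(x+2j+1)$ into $(x+1)_{2n+1}$ collapses the right-hand side to $\frac{(x+1)(xn+3n+1)}{n-1}\binm{x+2n+1}{n-2}$. Recombined with the prefactor $\frac{(p+1)(pn+3n+1)}{8(n-1)}\binm{p+2n+1}{n-2}$ of Theorem \ref{thm-f}, the contribution from this auxiliary sum flips the sign of the $-4H_p^{\langle 2\rangle}$ term into the $+H_p^{\langle 2\rangle}$ seen in the corollary.

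The next step is to translate the half-integer harmonic numbers $H_n(p/2)$ and $H_{n+1}((p-1)/2)$ into classical harmonic numbers. Starting from the elementary identity $\sum_{i=1}^n \frac{1}{i-1/2} = 2H_{2n} - H_n$ and shifting, one obtains $H_n(q - \tfrac12) = 2H_{2q+2n} - 2H_{2q} - H_{q+n} + H_q$ for $q \in \mathbb{N}_0$, alongside $H_n(q) = H_{n+q} - H_q$. Splitting on the parity of $p$: for $p = 2q$ I would get $H_n(\tfrac{p}{2}) - H_{n+1}(\tfrac{p-1}{2}) = -2A_n^{(0)}(q)$ with $A_n^{(0)}(q) = H_{2q+2n+1} - H_{q+n} - H_{2q} + H_q$; for $p = 2q+1$ the analogous calculation yields $H_n(\tfrac{p}{2}) - H_{n+1}(\tfrac{p-1}{2}) = +2A_n^{(1)}(q)$ with $A_n^{(1)}(q) = H_{2q+2n+2} - H_{q+n+1} - H_{2q+1} + H_q$. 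Multiplying out this factor of $\pm 2$ inside the product $[H_n-H_{n+1}]\,[H_n-H_{n+1}-R]$ of Theorem \ref{thm-f} produces exactly $4\,C_n(p)$, whose factor $4$ absorbs the $\tfrac{1}{8}$ of Theorem \ref{thm-f}'s prefactor into the $\tfrac{1}{2}$ of the corollary, and whose sign flip between the two parities explains the opposite signs of the rational correction inside $C_n(2q)$ (added) and $C_n(2q+1)$ (subtracted).

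The main obstacle will be the bookkeeping in the parity case analysis: after the sign is extracted, I have to verify that the halved rational correction $\tfrac{2(x+n+1)(x^2+3x-2n+1)}{n(x+1)(xn+3n+1)}$ simplifies at $x = 2q$ to $\tfrac{2(2q+n+1)(4q^2+6q-2n+1)}{n(2q+1)(2qn+3n+1)}$ and at $x = 2q+1$ to $\tfrac{(2q+n+2)(4q^2+10q-2n+5)}{n(q+1)(2qn+4n+1)}$, using $p^2+3p = 4q^2+6q$ in the first case and $p^2+3p = 4q^2+10q+4$ in the second. These substitutions are purely mechanical, but keeping track of the various cancellations through Theorem \ref{thm-f}'s $\pm$ signs and through the two cases of the closed form for the half-integer harmonic number is tedious. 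The conceptual core of the proof, namely the evaluation of $S_n(p)$ from \eqref{watson-g} at $y = 2x$ and the reduction of half-integer harmonic numbers, is captured in the first two paragraphs.
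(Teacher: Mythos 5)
Your proposal is correct and follows essentially the same route as the paper, which proves Corollary \ref{corl-f} by setting $x=p$ in Theorem \ref{thm-f} and invoking \eqref{watson-g} (whose $y=2x$ case \eqref{watson-j} supplies exactly the auxiliary sum $\sum_{k}k^{2}\binom{2n-k}{n}\binom{p+k}{k}$ needed to absorb the $H_{p}^{\langle2\rangle}$ shift). Your parity reduction of $H_n(\tfrac{p}{2})-H_{n+1}(\tfrac{p-1}{2})$ to $\mp2$ times the bracketed factors of $C_n(p)$, and the resulting halving and sign flip of the rational correction, check out.
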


When $p=0$, Corollary \ref{corl-f} creates the concise harmonic
 number identity:
\bnm
 &&\sum_{k=0}^n\binm{2n-k}{n}k^2H_{k}^{\langle2\rangle}=\frac{3n+1}{2(n-1)}\binm{2n+1}{n-2}\\
&&\,\,\times\:\Big\{H_{2n+1}^{\langle2\rangle}-\big[H_{2n+1}-H_n\big]
\big[H_{2n+1}-H_n-\tfrac{2(n+1)(2n-1)}{n(3n+1)}\big]+\tfrac{2(n^2-n+1)}{n(3n+1)}\Big\}.
 \enm

\section{The third family of summation formulae involving\\ generalized harmonic numbers}

\begin{thm} \label{thm-g}
Let $x$ be a complex number. Then
 \bnm
&&\xxqdn\sum_{k=0}^n\binm{2n-k}{n}\binm{x+k}{k}H^2_{k}(x)=\frac{\binm{x+2n+1}{n}}{8}
 \Big\{4H_{n}^{\langle2\rangle}(x+1)-4H_{n}^{\langle2\rangle}(x+n+1)\\
&&+8H^2_n(x+n+1)
-\big[H_n(\tfrac{x}{2})-H_n(\tfrac{x+1}{2})\big]\big[H_n(\tfrac{x}{2})-H_n(\tfrac{x+1}{2})-\tfrac{4}{x+1}\big]\Big\}.
 \enm
\end{thm}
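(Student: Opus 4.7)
The plan is to reduce Theorem \ref{thm-g} to Theorem \ref{thm-d} by means of the second derivative operator $\mathcal{D}_x^2$, together with the closed form
\[\sum_{k=0}^n\binm{2n-k}{n}\binm{x+k}{k}=\binm{x+2n+1}{n}.\]
Unlike in the proof of Theorem \ref{thm-d}, no L'H\^opital-style limit is needed here; instead the key observation
\[\mathcal{D}_x^2\binm{x+k}{k}=\binm{x+k}{k}\big\{H_k^2(x)-H_k^{\langle2\rangle}(x)\big\}\]
produces $H_k^2(x)$ directly, modulo a residual $H_k^{\langle2\rangle}(x)$ contribution already evaluated in Theorem \ref{thm-d}.

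First I would verify the displayed auxiliary identity. Setting $y=2x$ in \eqref{watson-c} makes the two terms on its right hand side coincide and collapses the ratio $\binm{y-x+k}{k}/\binm{\frac{y}{2}+k}{k}$ inside the sum to $1$; Legendre's duplication $(a)_{2n}=4^n(\frac{a}{2})_n(\frac{a+1}{2})_n$ applied with $a=x+2$ then simplifies the right hand side to $\binm{x+2n+1}{n}$.

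Next, I would apply $\mathcal{D}_x^2$ to both sides of this auxiliary identity. Using Lemma \ref{lemm-a} together with the relation $\mathcal{D}_xH_k(x)=-H_k^{\langle2\rangle}(x)$, one finds
\[\sum_{k=0}^n\binm{2n-k}{n}\binm{x+k}{k}\big\{H_k^2(x)-H_k^{\langle2\rangle}(x)\big\}=\binm{x+2n+1}{n}\big\{H_n^2(x+n+1)-H_n^{\langle2\rangle}(x+n+1)\big\}.\]
Isolating the $H_k^2(x)$ sum and substituting the value of the $H_k^{\langle2\rangle}(x)$ sum from Theorem \ref{thm-d} reduces everything to an algebraic tidying.

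The main obstacle is this final collection step. The crucial manipulation is the splitting $H_{2n}^{\langle2\rangle}(x+1)=H_n^{\langle2\rangle}(x+1)+H_n^{\langle2\rangle}(x+n+1)$, which converts the $4H_{2n}^{\langle2\rangle}(x+1)$ term appearing in Theorem \ref{thm-d} into $4H_n^{\langle2\rangle}(x+1)+4H_n^{\langle2\rangle}(x+n+1)$; adding the $8\{H_n^2(x+n+1)-H_n^{\langle2\rangle}(x+n+1)\}$ contribution coming from $\mathcal{D}_x^2\binm{x+2n+1}{n}$ then drives the coefficient of $H_n^{\langle2\rangle}(x+n+1)$ to $-4$ and produces the $+8H_n^2(x+n+1)$ term, matching exactly the right hand side of Theorem \ref{thm-g}. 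The bracket $\big[H_n(\tfrac{x}{2})-H_n(\tfrac{x+1}{2})\big]\big[H_n(\tfrac{x}{2})-H_n(\tfrac{x+1}{2})-\tfrac{4}{x+1}\big]$ is carried through unchanged from Theorem \ref{thm-d}.
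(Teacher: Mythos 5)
Your proposal is correct and follows essentially the same route as the paper: specialize \eqref{watson-c} at $y=2x$ to get $\sum_k\binm{2n-k}{n}\binm{x+k}{k}=\binm{x+2n+1}{n}$, differentiate twice in $x$ to produce the sum of $H_k^2(x)-H_k^{\langle2\rangle}(x)$, and add Theorem \ref{thm-d}. The only cosmetic difference is that you write the right-hand side directly as $H_n^2(x+n+1)-H_n^{\langle2\rangle}(x+n+1)$ where the paper writes the equivalent $[H_{2n+1}(x)-H_{n+1}(x)]^2-[H_{2n+1}^{\langle2\rangle}(x)-H_{n+1}^{\langle2\rangle}(x)]$, and you spell out the final splitting $H_{2n}^{\langle2\rangle}(x+1)=H_n^{\langle2\rangle}(x+1)+H_n^{\langle2\rangle}(x+n+1)$ that the paper leaves implicit.
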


\begin{proof}
The case $y=2x$ of \eqref{watson-c} reads as
 \bmn\label{watson-h}
\sum_{k=0}^n\binm{2n-k}{n}\binm{x+k}{k}=\binm{x+2n+1}{n}.
 \emn
 Applying the derivative operator $\mathcal{D}_x$ to both sides of
 \eqref{watson-h}, we gain
 \bmn\label{harmonic-a}
\sum_{k=0}^n\binm{2n-k}{n}\binm{x+k}{k}H_k(x)=\binm{x+2n+1}{n}\big\{H_{2n+1}(x)-H_{n+1}(x)\big\}.
 \emn
 Applying the derivative operator $\mathcal{D}_x$ to both sides of
 \eqref{harmonic-a}, we achieve
 \bnm
&&\sum_{k=0}^n\binm{2n-k}{n}\binm{x+k}{k}\big\{H^2_k(x)-H_{k}^{\langle2\rangle}(x)\big\}
 \nnm\\&&\:\:=\:\binm{x+2n+1}{n}\big\{[H_{2n+1}(x)-H_{n+1}(x)]^2
-[H_{2n+1}^{\langle2\rangle}(x)-H_{n+1}^{\langle2\rangle}(x)]\big\}.
 \enm
The sum of Theorem \ref{thm-d} and the last equation gives Theorem
\ref{thm-g}.
\end{proof}

Selecting $x=p$ with $p\in \mathbb{N}_0$ in Theorem \ref{thm-g} and
using \eqref{watson-h} and \eqref{harmonic-a}, we have the summation
formula on harmonic numbers.

\begin{corl} \label{corl-g}
Let $p$ be a nonnegative integer. Then
 \bnm
&&\xqdn\sum_{k=0}^n\binm{2n-k}{n}\binm{p+k}{k}H^2_{p+k}
=\frac{\binm{p+2n+1}{n}}{2}\Big\{2H_{p+n+1}^{\langle2\rangle}-H_{p+2n+1}^{\langle2\rangle}-H_{p+1}^{\langle2\rangle}\\
&&\xqdn\,\,+\:2(H_{p+n+1}-H_{p+2n+1})^2-2H_{p}(2H_{p+n+1}-2H_{p+2n+1}-H_{p})-A_n(p)\Big\},
 \enm
where $A_n(p)$ has appeared in Corollary \ref{corl-d}.
\end{corl}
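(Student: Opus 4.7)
The plan is to specialize Theorem~\ref{thm-g} at $x=p$ and translate every shifted harmonic number to a classical one via the elementary identities $H_k(p)=H_{p+k}-H_p$ and $H_n^{\langle\ell\rangle}(p+r)=H_{p+n+r}^{\langle\ell\rangle}-H_{p+r}^{\langle\ell\rangle}$, valid for $p,r\in\mathbb{N}_0$ and $\ell\geq 1$. The Corollary then falls out after a short algebraic reshuffling on both sides, with the auxiliary identities \eqref{watson-h} and \eqref{harmonic-a} providing the moment sums of degree $0$ and $1$ in $H_{p+k}$.

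On the left-hand side, expanding $H_k^2(p)=H_{p+k}^2-2H_p H_{p+k}+H_p^2$ reduces the problem to evaluating two auxiliary sums. The sum $\sum_{k=0}^n\binom{2n-k}{n}\binom{p+k}{k}$ equals $\binom{p+2n+1}{n}$ by \eqref{watson-h}; the sum $\sum_{k=0}^n\binom{2n-k}{n}\binom{p+k}{k}H_{p+k}$ equals $\binom{p+2n+1}{n}\bigl(H_{p+2n+1}-H_{p+n+1}+H_p\bigr)$, which follows from \eqref{harmonic-a} at $x=p$ after adding $H_p\binom{p+2n+1}{n}$ to both sides. Solving for the target sum expresses it as the LHS of Theorem~\ref{thm-g} at $x=p$ plus the correction $\binom{p+2n+1}{n}\bigl[2H_p(H_{p+2n+1}-H_{p+n+1})+H_p^2\bigr]$; factoring out the prefactor $\tfrac{1}{2}\binom{p+2n+1}{n}$ rewrites this correction as $-2H_p(2H_{p+n+1}-2H_{p+2n+1}-H_p)$, which is precisely the term appearing in the Corollary.

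On the right-hand side of Theorem~\ref{thm-g}, the three pieces $H_n^{\langle 2\rangle}(p+1)$, $H_n^{\langle 2\rangle}(p+n+1)$ and $H_n(p+n+1)$ telescope to $H_{p+n+1}^{\langle 2\rangle}-H_{p+1}^{\langle 2\rangle}$, $H_{p+2n+1}^{\langle 2\rangle}-H_{p+n+1}^{\langle 2\rangle}$ and $H_{p+2n+1}-H_{p+n+1}$ respectively, so that the first three summands in the brace collapse to $4\bigl[2H_{p+n+1}^{\langle 2\rangle}-H_{p+2n+1}^{\langle 2\rangle}-H_{p+1}^{\langle 2\rangle}+2(H_{p+n+1}-H_{p+2n+1})^2\bigr]$. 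The remaining product $[H_n(\tfrac{p}{2})-H_n(\tfrac{p+1}{2})][H_n(\tfrac{p}{2})-H_n(\tfrac{p+1}{2})-\tfrac{4}{p+1}]$ equals $4A_n(p)$ with the piecewise formula recorded in Corollary~\ref{corl-d}, as verified by expanding $H_n(\tfrac{p+1}{2})$ through the even- and odd-indexed decomposition of $\sum_{j=p+1}^{p+2n+1}1/j$ separately for $p=2q$ and $p=2q+1$. Absorbing the overall factor $\tfrac{1}{8}\to\tfrac{1}{2}$ and combining the simplified right-hand side with the LHS correction reproduces the stated identity. The only real obstacle is careful bookkeeping of the quadratic correction arising from the change of variables $H_k(p)\to H_{p+k}$, which ultimately reduces to the three applications of \eqref{watson-h} and \eqref{harmonic-a} above together with the product-identification already carried out in Corollary~\ref{corl-d}.
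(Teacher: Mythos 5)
Your proposal is correct and follows exactly the route the paper indicates: specialize Theorem~\ref{thm-g} at $x=p$, convert $H_k^2(p)$ to $H_{p+k}^2$ using the moment sums \eqref{watson-h} and \eqref{harmonic-a}, and identify the quadratic product with $4A_n(p)$ as already done in Corollary~\ref{corl-d}. The algebra (the correction term $-2H_p(2H_{p+n+1}-2H_{p+2n+1}-H_p)$ and the collapse of the $H_n^{\langle2\rangle}(p+1)$, $H_n^{\langle2\rangle}(p+n+1)$, $H_n^2(p+n+1)$ terms) all checks out.
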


When $p=0$, Corollary \ref{corl-g} reduces to the concise harmonic
 number identity:
\bnm
 &&\:\sum_{k=0}^{n}\binm{2n-k}{n}H^2_{k}=\frac{\binm{2n+1}{n}}{2}
\big\{2H_{n+1}^{\langle2\rangle}-H_{2n+1}^{\langle2\rangle}+2H^2_{n+1}-H^2_n
\\&&\qquad\qquad\qquad\qquad\qquad\qquad
+H_{2n+1}(H_{2n+1}-4H_{n+1}+2H_n)\big\}.
 \enm

\begin{thm} \label{thm-h}
Let $x$ be a complex number. Then
 \bnm
\sum_{k=0}^n\binm{2n-k}{n}\binm{x+k}{k}kH^2_{k}(x)&&\xqdn\!\!=\frac{x+1}{8}\binm{x+2n+1}{n-1}
 \Big\{8H_{n+2}^{\langle2\rangle}(x)-4H_{2n}^{\langle2\rangle}(x+1)\\
&&\xxqdn\xqdn+8\big[H_{n+2}(x)-H_{2n+1}(x)\big]\big[H_{n+2}(x)-H_{2n+1}(x)-\tfrac{2}{x+1}\big]\\
&&\xxqdn\xqdn-\big[H_n(\tfrac{x}{2})-H_n(\tfrac{x+1}{2})\big]\big[H_n(\tfrac{x}{2})-H_n(\tfrac{x+1}{2})+\tfrac{4}{n}\big]\Big\}.
 \enm
\end{thm}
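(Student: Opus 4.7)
The plan is to mirror the proof of Theorem \ref{thm-g} by deriving a base identity, differentiating it twice with respect to $x$, and combining the result with Theorem \ref{thm-e}.

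First, I would specialize $y=2x$ in the $k$-weighted Watson evaluation \eqref{watson-e}. On the left-hand side the ratio $\binm{y-x+k}{k}/\binm{\frac{y}{2}+k}{k}$ collapses to $1$. On the right-hand side the two summands coincide: both prefactors $2x-y+2n$ and $y-2x+2n$ reduce to $2n$, and both binomial products collapse to $\binm{\frac{x}{2}+n}{n}\binm{\frac{x+1}{2}+n}{n}$. Invoking the Legendre-type product $\binm{\frac{x}{2}+n}{n}\binm{\frac{x+1}{2}+n}{n}=\frac{(x+2)(x+3)\cdots(x+2n+1)}{4^n(n!)^2}$, everything simplifies to the clean base identity
\begin{equation*}
 \sum_{k=0}^n k\binm{2n-k}{n}\binm{x+k}{k} \;=\; (x+1)\binm{x+2n+1}{n-1}.
\end{equation*}

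Second, I would apply $\mathcal{D}_x$ twice to this identity. Using $\mathcal{D}_x\binm{x+k}{k}=\binm{x+k}{k}H_k(x)$, $\mathcal{D}_xH_k(x)=-H_k^{\langle 2\rangle}(x)$, and $\mathcal{D}_x\binm{x+2n+1}{n-1}=\binm{x+2n+1}{n-1}[H_{2n+1}(x)-H_{n+2}(x)]$, the first differentiation yields
\begin{equation*}
 \sum_{k=0}^n k\binm{2n-k}{n}\binm{x+k}{k}H_k(x) = \binm{x+2n+1}{n-1}\bigl\{1+(x+1)[H_{2n+1}(x)-H_{n+2}(x)]\bigr\},
\end{equation*}
and a second differentiation provides a closed form for $\sum_{k=0}^n k\binm{2n-k}{n}\binm{x+k}{k}\{H_k^2(x)-H_k^{\langle 2\rangle}(x)\}$ in terms of $[H_{2n+1}(x)-H_{n+2}(x)]^2$ and $H_{2n+1}^{\langle 2\rangle}(x)-H_{n+2}^{\langle 2\rangle}(x)$, together with linear-in-harmonic corrections carrying factors of $1/(x+1)$.

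Finally, I would add this second-derivative identity to Theorem \ref{thm-e}. The $H_k^{\langle 2\rangle}(x)$ contributions on the left-hand side cancel, leaving the target sum $\sum_{k=0}^n k\binm{2n-k}{n}\binm{x+k}{k}H_k^2(x)$. On the right-hand side, the relation $H_{2n+1}^{\langle 2\rangle}(x)=H_{2n}^{\langle 2\rangle}(x+1)+1/(x+1)^2$ converts the combined contribution $-4H_{2n+1}^{\langle 2\rangle}(x)+8H_{n+2}^{\langle 2\rangle}(x)$ into $8H_{n+2}^{\langle 2\rangle}(x)-4H_{2n}^{\langle 2\rangle}(x+1)-4/(x+1)^2$, and the stray $-4/(x+1)^2$ exactly cancels the constant $+4/(x+1)^2$ coming from Theorem \ref{thm-e}; simultaneously the quadratic $8[H_{2n+1}(x)-H_{n+2}(x)]^2$ together with the linear piece $\tfrac{16}{x+1}[H_{2n+1}(x)-H_{n+2}(x)]$ factor as $8[H_{n+2}(x)-H_{2n+1}(x)][H_{n+2}(x)-H_{2n+1}(x)-\tfrac{2}{x+1}]$, reproducing precisely the bracket stated in Theorem \ref{thm-h}.

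The main obstacle will be purely algebraic bookkeeping: differentiating $(x+1)\binm{x+2n+1}{n-1}\{\cdots\}$ by the Leibniz rule generates several cross terms, and one must carefully group the $1/(x+1)$ and $1/(x+1)^2$ corrections so that the pieces from the second derivative and from Theorem \ref{thm-e} coalesce into the stated form. No new hypergeometric identity is needed beyond those already established.
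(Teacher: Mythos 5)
Your proposal is correct and follows essentially the same route as the paper: specialize \eqref{watson-e} at $y=2x$ to get $\sum_k k\binm{2n-k}{n}\binm{x+k}{k}=(x+1)\binm{x+2n+1}{n-1}$, differentiate twice with $\mathcal{D}_x$, and add the resulting identity for $\sum_k k\binm{2n-k}{n}\binm{x+k}{k}\{H_k^2(x)-H_k^{\langle2\rangle}(x)\}$ to Theorem \ref{thm-e}. Your bookkeeping of the $4/(x+1)^2$ cancellation and the factoring of the quadratic bracket matches the paper's computation exactly (the paper writes the first-derivative identity with $H_{n+1}(x+1)$ in place of your equivalent $H_{n+2}(x)-\tfrac{1}{x+1}$).
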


\begin{proof}
The case $y=2x$ of \eqref{watson-e} can be written as
 \bmn\label{watson-i}
\sum_{k=0}^nk\binm{2n-k}{n}\binm{x+k}{k}=(x+1)\binm{x+2n+1}{n-1}.
 \emn
 Applying the derivative operator $\mathcal{D}_x$ to both sides of
 \eqref{watson-i}, we attain
 \bmn\label{harmonic-b}
\sum_{k=0}^n\binm{2n-k}{n}\binm{x+k}{k}kH_k(x)=(x+1)\binm{x+2n+1}{n-1}\big\{H_{2n+1}(x)-H_{n+1}(x+1)\big\}.
 \emn
 Applying the derivative operator $\mathcal{D}_x$ to both sides of
 \eqref{harmonic-b}, we obtain
 \bnm
&&\xxqdn\qqdn\sum_{k=0}^n\binm{2n-k}{n}\binm{x+k}{k}k\big\{H^2_k(x)-H_{k}^{\langle2\rangle}(x)\big\}=(x+1)\binm{x+2n+1}{n-1}
 \\&&\xxqdn\qqdn\:\:\times\:\Big\{[H_{n+2}^{\langle2\rangle}(x)-H_{2n+1}^{\langle2\rangle}(x)]+
 [H_{n+2}(x)-H_{2n+1}(x)][H_{n+2}(x)-H_{2n+1}(x)-\tfrac{2}{x+1}]\Big\}.
 \enm
The sum of Theorem \ref{thm-e} and the last equation provides
Theorem \ref{thm-h}.
\end{proof}

Choosing $x=p$ with $p\in \mathbb{N}_0$ in Theorem \ref{thm-h} and
utilizing \eqref{watson-i} and \eqref{harmonic-b}, we deduce the
summation formula on harmonic numbers.

\begin{corl} \label{corl-h}
Let $p$ be a nonnegative integer. Then
 \bnm
&&\xxqdn\sum_{k=0}^n\binm{2n-k}{n}\binm{p+k}{k}kH^2_{p+k}
=\frac{p+1}{2}\binm{p+2n+1}{n-1}\Big\{2H_{p+n+2}^{\langle2\rangle}-H_{p+2n+1}^{\langle2\rangle}-H_{p+1}^{\langle2\rangle}\\
&&\xqdn\,\,+\:2(H_{p+n+2}-H_{p+2n+1})^2-2H_{p+1}(2H_{p+n+2}-2H_{p+2n+1}-H_{p+1})-B_n(p)\Big\},
 \enm
where $B_n(p)$ has emerged in Corollary \ref{corl-e}.
\end{corl}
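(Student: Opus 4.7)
The strategy mirrors the passage from Theorem~\ref{thm-g} to Corollary~\ref{corl-g}: specialize $x=p$ in Theorem~\ref{thm-h} and convert the parameter-shifted harmonic sums $H_k(p)$ into ordinary harmonic numbers via $H_k(p)=H_{p+k}-H_p$.

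Concretely, I would expand
$$kH_k^2(p)\;=\;k\bigl(H_{p+k}-H_p\bigr)^2\;=\;kH_{p+k}^2-2H_p\,kH_{p+k}+H_p^2\,k,$$
multiply through by $\binom{2n-k}{n}\binom{p+k}{k}$, and sum over $k$. This produces a linear relation between the target sum $\sum k\binom{2n-k}{n}\binom{p+k}{k}H_{p+k}^2$ and the two auxiliary sums $\sum k\binom{2n-k}{n}\binom{p+k}{k}$ and $\sum k\binom{2n-k}{n}\binom{p+k}{k}H_{p+k}$. The first is given directly by \eqref{watson-i}, and the second follows from \eqref{harmonic-b} after rewriting $H_{2n+1}(p)=H_{p+2n+1}-H_p$ and $H_{n+1}(p+1)=H_{p+n+2}-H_{p+1}$. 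Solving this linear relation for the target sum therefore reduces the whole statement to translating the right-hand side of Theorem~\ref{thm-h}, evaluated at $x=p$, into integer-indexed harmonic numbers.

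The only genuine obstacle in that translation is the half-shifted piece $H_n(\tfrac{p}{2})-H_n(\tfrac{p+1}{2})$ that persists from Theorem~\ref{thm-h}: this difference does not collapse to a single classical harmonic expression, and its evaluation requires splitting $p$ into the two parities $p=2q$ and $p=2q+1$. This bifurcation, however, is precisely the one already carried out in the definition of $B_n(p)$ in Corollary~\ref{corl-e}, so I would import that computation verbatim. The remaining terms $H^{\langle2\rangle}_{n+2}(p)$, $H^{\langle2\rangle}_{2n}(p+1)$ and the quadratic combination $[H_{n+2}(p)-H_{2n+1}(p)][H_{n+2}(p)-H_{2n+1}(p)-\tfrac{2}{p+1}]$ convert directly to $H^{\langle2\rangle}_{p+n+2}$, $H^{\langle2\rangle}_{p+2n+1}$, $H^{\langle2\rangle}_{p+1}$, and to integer-indexed analogues of $H_{p+n+2}-H_{p+2n+1}$ together with $H_{p+1}$. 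After gathering the contributions from the $2H_p S_1$ and $H_p^2 S_0$ corrections supplied by \eqref{watson-i} and \eqref{harmonic-b} and performing a routine rearrangement, the stated closed form emerges, with the $B_n(p)$ term absorbing exactly the even/odd dichotomy coming from the half-shifted harmonic difference.
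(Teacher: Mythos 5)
Your proposal is correct and follows exactly the route the paper takes (its entire ``proof'' is the sentence preceding the corollary): specialize Theorem~\ref{thm-h} at $x=p$, expand $H_{p+k}^2=(H_k(p)+H_p)^2$ so that the correction terms are supplied by \eqref{harmonic-b} and \eqref{watson-i}, and absorb the half-shifted difference $H_n(\tfrac{p}{2})-H_n(\tfrac{p+1}{2})$ into the parity-split quantity $B_n(p)$ already computed for Corollary~\ref{corl-e}. The remaining work is the routine bookkeeping you describe, so nothing essential is missing.
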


When $p=0$, Corollary \ref{corl-h} produces the concise harmonic
 number identity:
\bnm
 \sum_{k=0}^{n}\binm{2n-k}{n}kH^2_{k}=\frac{\binm{2n+1}{n-1}}{2}
\Big\{2H_{n+2}^{\langle2\rangle}-H_{2n+1}^{\langle2\rangle}+\big[H_{2n+1}-H_{n+2}\big]
\\\times\:\big[H_{2n+1}-H_{n+2}+\tfrac{2(3n^3+8n^2+6n+2)}{n(n+1)(n+2)}\big]
+\tfrac{2n^4+6n^3+6n^2+5n+4}{n(n+1)^2(n+2)^2}\Big\}.
 \enm

\begin{thm} \label{thm-i}
Let $x$ be a complex number. Then
 \bnm
&&\sum_{k=0}^n\binm{2n-k}{n}\binm{x+k}{k}k^2H^2_{k}(x)=\frac{(x+1)(xn+3n+1)}{8(n-1)}\binm{x+2n+1}{n-2}\\
&&\:\times\:\Big\{8\big[H_{n+3}(x)-H_{2n+1}(x)\big]
\big[H_{n+3}(x)-H_{2n+1}(x)-\tfrac{2(2xn+4n+1)}{(x+1)(xn+3n+1)}\big]\\
&&\:-\:\,\big[H_n(\tfrac{x}{2})-H_{n+1}(\tfrac{x-1}{2})\big]
\big[H_n(\tfrac{x}{2})-H_{n+1}(\tfrac{x-1}{2})-\tfrac{4(x+n+1)(x^2+3x-2n+1)}{n(x+1)(xn+3n+1)}\big]\\
&&\:+\:\,8H_{n+3}^{\langle2\rangle}(x)-4H_{2n+1}^{\langle2\rangle}(x)+\tfrac{8(x^2+3x+3n^2-n+1)}{n(x+1)(xn+3n+1)}\Big\}.
 \enm
\end{thm}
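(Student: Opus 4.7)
The plan is to mimic the two-step differentiation strategy that produced Theorems~\ref{thm-g} and \ref{thm-h}, now applied to the $k^{2}$-weighted identity \eqref{watson-g}. First I would specialize \eqref{watson-g} at $y=2x$. The factor $\binm{y/2+k}{k}$ in the denominator then cancels $\binm{x+k}{k}$ in the numerator, and a direct simplification using the duplication identity $(a)_{n}(a+\tfrac12)_{n}=(2a)_{2n}/4^{n}$ shows that $\delta_{n}(x,2x)=\xi_{n}(x,2x)=\tfrac{4^{n}n(xn+3n+1)}{x+2}$ while the two binomial products collapse into a single one. After clearing common factors this yields the polynomial identity
\[\sum_{k=0}^{n}k^{2}\binm{2n-k}{n}\binm{x+k}{k}=\frac{(x+1)(xn+3n+1)}{n-1}\binm{x+2n+1}{n-2},\]
which is the $k^{2}$-analogue of \eqref{watson-h} and \eqref{watson-i}.

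Next, denote this right-hand side by $A(x)$ and apply $\mathcal{D}_{x}$. Using $\mathcal{D}_{x}\binm{x+k}{k}=\binm{x+k}{k}H_{k}(x)$, the left-hand side becomes $\sum k^{2}\binm{2n-k}{n}\binm{x+k}{k}H_{k}(x)$, while Lemma~\ref{lemm-a} yields $A'(x)=A(x)B(x)$ with
\[B(x)=\frac{2xn+4n+1}{(x+1)(xn+3n+1)}+H_{2n+1}(x)-H_{n+3}(x).\]
Applying $\mathcal{D}_{x}$ once more, using $\mathcal{D}_{x}H_{k}(x)=-H_{k}^{\langle 2\rangle}(x)$ together with the product rule $(AB)'=A(B^{2}+B')$, produces
\[\sum_{k=0}^{n}k^{2}\binm{2n-k}{n}\binm{x+k}{k}\bigl\{H_{k}^{2}(x)-H_{k}^{\langle 2\rangle}(x)\bigr\}=A(x)\bigl\{B(x)^{2}+B'(x)\bigr\}.\]
Finally, adding Theorem~\ref{thm-f} to both sides of this equation cancels the $-H_{k}^{\langle 2\rangle}(x)$ term on the left and contributes on the right the half-integer bracket built from $H_{n}(x/2)-H_{n+1}((x-1)/2)$, producing the statement of Theorem~\ref{thm-i}.

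The hard part will be the algebraic reconciliation of the right-hand side. Expanding $B(x)^{2}$ yields the $(H_{2n+1}(x)-H_{n+3}(x))^{2}$ piece together with a cross-term proportional to $\tfrac{2(2xn+4n+1)}{(x+1)(xn+3n+1)}$; computing $B'(x)$ requires Lemma~\ref{lemm-a} applied to the factorization $\tfrac{2xn+4n+1}{x+1}\cdot\tfrac{1}{xn+3n+1}$ together with $\mathcal{D}_{x}H_{k}(x)=-H_{k}^{\langle 2\rangle}(x)$, producing the $H_{n+3}^{\langle 2\rangle}(x)-H_{2n+1}^{\langle 2\rangle}(x)$ terms and a rational remainder. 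This remainder, combined with the rational tail of Theorem~\ref{thm-f}, must collapse to exactly $\tfrac{8(x^{2}+3x+3n^{2}-n+1)}{n(x+1)(xn+3n+1)}$---a purely computational but delicate step that parallels the analogous reconciliation in the proofs of Theorems~\ref{thm-g} and \ref{thm-h}.
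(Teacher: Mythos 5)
Your proposal follows essentially the same route as the paper: specialize \eqref{watson-g} at $y=2x$ to obtain \eqref{watson-j}, differentiate twice with respect to $x$ to reach the identity for $\sum k^2\binm{2n-k}{n}\binm{x+k}{k}\{H_k^2(x)-H_k^{\langle2\rangle}(x)\}$, and then add the $k^2H_k^{\langle2\rangle}(x)$ summation; your intermediate $B(x)$ agrees with the paper's \eqref{harmonic-c} after rewriting $H_{n+2}(x+1)=H_{n+3}(x)-\tfrac{1}{x+1}$. Note that you correctly identify Theorem \ref{thm-f} as the result to be added in the final step, whereas the paper's text cites Theorem \ref{thm-e} there, which is evidently a typo.
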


\begin{proof}
The case $y=2x$ of \eqref{watson-g} can be manipulated as
 \bmn\label{watson-j}
\sum_{k=0}^nk^2\binm{2n-k}{n}\binm{x+k}{k}=\frac{(x+1)(xn+3n+1)}{n-1}\binm{x+2n+1}{n-2}.
 \emn
 Applying the derivative operator $\mathcal{D}_x$ to both sides of
 \eqref{watson-j}, we get
 \bmn\label{harmonic-c}
&&\xxqdn\sum_{k=0}^n\binm{2n-k}{n}\binm{x+k}{k}k^2H_k(x)=\frac{(x+1)(xn+3n+1)}{n-1}
 \nnm\\
&&\xxqdn\:\times\:\:\binm{x+2n+1}{n-2}\Big\{H_{2n+1}(x)-H_{n+2}(x+1)+\tfrac{n}{xn+3n+1}\Big\}.
 \emn
 Applying the derivative operator $\mathcal{D}_x$ to both sides of
 \eqref{harmonic-c}, we gain
 \bnm
&&\sum_{k=0}^n\binm{2n-k}{n}\binm{x+k}{k}k^2\big\{H^2_k(x)-H_{k}^{\langle2\rangle}(x)\big\}=\frac{(x+1)(xn+3n+1)}{n-1}\\
&&\:\times\:\:\binm{x+2n+1}{n-2}\Big\{H_{n+3}^{\langle2\rangle}(x)-H_{2n+1}^{\langle2\rangle}(x)
+\big[H_{n+3}(x)-H_{2n+1}(x)\big]\\
&&\:\times\:\:\big[H_{n+3}(x)-H_{2n+1}(x)-\tfrac{2(2xn+4n+1)}{(x+1)(xn+3n+1)}\big]+\tfrac{2n}{(x+1)(xn+3n+1)}\Big\}.
 \enm
The sum of Theorem \ref{thm-e} and the last equation offers Theorem
\ref{thm-i}.
\end{proof}

Taking $x=p$ with $p\in \mathbb{N}_0$ in Theorem \ref{thm-i} and
exploiting \eqref{watson-j} and \eqref{harmonic-c}, we have the
summation formula on harmonic numbers.

\begin{corl} \label{corl-i}
Let $p$ be a nonnegative integer. Then
 \bnm
&&\xqdn\qqdn\sum_{k=0}^n\binm{2n-k}{n}\binm{p+k}{k}k^2H^2_{p+k}
=\frac{(p+1)(pn+3n+1)}{2(n-1)}\binm{p+2n+1}{n-2}\\
&&\xqdn\qqdn\,\,\times\:\Big\{2(H_{p+n+3}-H_{p+2n+1})\big(H_{p+n+3}-H_{p+2n+1}-2H_{p+1}-\tfrac{2n}{pn+3n+1}\big)+2H^2_p\\
&&\xqdn\qqdn\:+\:\tfrac{4(2pn+4n+1)}{(p+1)(pn+3n+1)}H_p+\tfrac{2(p^2+3p+3n^2-n+1)}{n(p+1)(pn+3n+1)}+2H_{p+n+3}^{\langle2\rangle}
-H_{p+2n+1}^{\langle2\rangle}-H_{p}^{\langle2\rangle}-C_n(p)\Big\},
 \enm
where $C_n(p)$ has been displayed in Corollary \ref{corl-f}.
\end{corl}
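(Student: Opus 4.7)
The plan is to follow the same two-step differentiation strategy already exploited for Theorems~\ref{thm-g} and~\ref{thm-h}. Specializing \eqref{watson-g} at $y=2x$ produces a closed-form evaluation of the weighted binomial sum $\sum_{k=0}^n k^2\binm{2n-k}{n}\binm{x+k}{k}$, namely \eqref{watson-j}. This harmonic-free identity is the starting point, and all harmonic-number data will subsequently be generated by differentiating in $x$.

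I would first apply $\mathcal{D}_x$ to \eqref{watson-j}. On the left, Lemma~\ref{lemm-a} specializes to $\mathcal{D}_x\binm{x+k}{k}=\binm{x+k}{k}H_k(x)$, inserting a factor $H_k(x)$ under the summation. On the right, the product rule combined with the formula $\mathcal{D}_x\binm{x+2n+1}{n-2}=\binm{x+2n+1}{n-2}\{H_{2n+1}(x)-H_{n+3}(x)\}$ and the logarithmic derivative of $(x+1)(xn+3n+1)$ produces the first-order identity \eqref{harmonic-c}. Applying $\mathcal{D}_x$ a second time to \eqref{harmonic-c} converts the left-hand summand into $\binm{2n-k}{n}\binm{x+k}{k}k^2\{H_k^2(x)-H_k^{\langle2\rangle}(x)\}$ via $\mathcal{D}_x H_k(x)=-H_k^{\langle2\rangle}(x)$ together with one further Lemma~\ref{lemm-a} differentiation of the binomial.

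To isolate the desired sum, I would then add the resulting identity to Theorem~\ref{thm-f}, which supplies a closed form for $\sum k^2\binm{2n-k}{n}\binm{x+k}{k}H_k^{\langle2\rangle}(x)$. The $H_k^{\langle2\rangle}(x)$ contributions cancel on the left and the two right-hand sides merge to yield the claimed expression.

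The main obstacle lies in the second differentiation of the right side of \eqref{harmonic-c}. The factor $(x+1)(xn+3n+1)\binm{x+2n+1}{n-2}$ must be differentiated by Lemma~\ref{lemm-a}, and the interior bracket (itself containing $H_{2n+1}(x)$, $H_{n+2}(x+1)$ and a rational correction) must be differentiated term by term, producing $H_{n+3}^{\langle2\rangle}(x)$, $H_{2n+1}^{\langle2\rangle}(x)$, cross products of $H_{n+3}(x)$ and $H_{2n+1}(x)$, and several rational residues. Reconciling these with the already intricate right side of Theorem~\ref{thm-f} so that the residual rational part simplifies precisely to $\tfrac{8(x^2+3x+3n^2-n+1)}{n(x+1)(xn+3n+1)}$ is where the algebraic bookkeeping is most delicate; the rest of the argument is an iterate of a pattern that has already been validated twice in the preceding theorems.
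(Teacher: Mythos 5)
Your proposal reconstructs, essentially verbatim, the paper's proof of Theorem \ref{thm-i}: differentiate \eqref{watson-j} to get \eqref{harmonic-c}, differentiate again to produce the sum weighted by $k^2\{H_k^2(x)-H_k^{\langle2\rangle}(x)\}$, and add the result to Theorem \ref{thm-f} (you are right that it is Theorem \ref{thm-f} that must be added; the paper's text says Theorem \ref{thm-e}, which is evidently a slip). But the statement you were asked to prove is Corollary \ref{corl-i}, not the theorem, and your argument stops exactly where the corollary's own derivation begins. The corollary's left-hand side involves $H_{p+k}^2$, whereas the theorem at $x=p$ gives the sum weighted by $H_k^2(p)$. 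Since $H_{p+k}=H_k(p)+H_p$, one has $H_{p+k}^2=H_k^2(p)+2H_pH_k(p)+H_p^2$, so the corollary requires, in addition to Theorem \ref{thm-i} at $x=p$, the evaluations of $\sum_k k^2\binm{2n-k}{n}\binm{p+k}{k}H_k(p)$ and $\sum_k k^2\binm{2n-k}{n}\binm{p+k}{k}$ --- which is precisely the role of \eqref{harmonic-c} and \eqref{watson-j} in the paper's one-line proof, and which is where the terms $2H_p^2$, the $H_p$-linear term, and the $-2H_{p+1}$ inside the quadratic factor of the stated right-hand side come from. Your proposal invokes \eqref{watson-j} and \eqref{harmonic-c} only as scaffolding for the theorem, not in this second, essential role.

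A second omission: the right-hand side of Theorem \ref{thm-i} contains the half-integer quantities $H_n(\tfrac{x}{2})-H_{n+1}(\tfrac{x-1}{2})$, which at $x=p$ must be converted into ordinary harmonic numbers; this forces the case split $p=2q$ versus $p=2q+1$ and produces the piecewise expression $C_n(p)$ of Corollary \ref{corl-f} appearing in the statement. Your write-up never performs or even mentions this conversion, so the claimed closed form of the corollary is not actually reached. In short, the approach is the right one and matches the paper up through Theorem \ref{thm-i}, but the passage from that theorem to the corollary --- the expansion of $H_{p+k}^2$, the absorption of the cross terms via \eqref{harmonic-c} and \eqref{watson-j}, and the parity reduction yielding $C_n(p)$ --- is missing.
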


When $p=0$, Corollary \ref{corl-i} creates the concise harmonic
 number identity:
\bnm
 &&\xxqdn\sum_{k=0}^{n}\binm{2n-k}{n}k^2H^2_{k}=\frac{3n+1}{2(n-1)}\binm{2n+1}{n-2}
\Big\{2\big[H_{2n+1}-H_{n+3}\big]\big[H_{2n+1}-H_{n+3}+\tfrac{8n+2}{3n+1}\big]
\\&&\xxqdn\:\,-\:
\big[H_{2n+1}-H_{n}\big]\big[H_{2n+1}-H_{n}-\tfrac{2(n+1)(2n-1)}{n(3n+1)}\big]
+2H_{n+3}^{\langle2\rangle}-H_{2n+1}^{\langle2\rangle}+\tfrac{2(3n^2-n+1)}{n(3n+1)}\Big\}.
 \enm

\textbf{Remark}: \eqref{harmonic-a}, \eqref{harmonic-b} and
\eqref{harmonic-c} can also be derived by means of the derivative
operator and Chu-Vandermonde convolution:
 \bnm
\sum_{k=0}^n\binm{x}{k}\binm{y}{n-k}=\binm{x+y}{n}.
 \enm
The reader may refer to Wei et. al. \cito{wei-c} for details. Some
similar results can be seen in Wei et. al. \cito{wei-d}.

 \textbf{Acknowledgments}

 The work is supported by the National Natural Science Foundation of China (No. 11301120).



\end{document}